\numberwithin{equation}{section}
\numberwithin{figure}{section}
\newtheorem{theorem}{Theorem}[section]
\newtheorem*{theorem*}{Theorem}
\newtheorem{corollary}[theorem]{Corollary}
\newtheorem{lemma}[theorem]{Lemma}
\newtheorem{proposition}[theorem]{Proposition}
\theoremstyle{definition}
\newtheorem{definition}{Definition}
\theoremstyle{remark}
\newtheorem{remark}[theorem]{Remark}
\newcommand{\SL}{\operatorname{SL}}
\newcommand{\diag}{\operatorname{diag}} 
\newcommand{\Lie}{\operatorname{Lie}}
\newcommand{\dtt}{\mathtt{d}} 
\newcommand{\att}{\mathtt{a}}
\newcommand{\SLdRZ}{\SL_{\dtt}(\mathbb{R})/\SL_{\dtt}(\mathbb{Z})}
\newcommand{\RBS}{\operatorname{RBS}} 
\newcommand{\stab}{\operatorname{stab}} 
\newcommand{\Hull}{\operatorname{Hull}} 
\newcommand{\all}{\operatorname{all}}
\newcommand\mathitem{\item\leavevmode\vspace*{-\dimexpr\baselineskip+\abovedisplayskip\relax}}
\begin{document}

\title[Upper bounds for the entropy in the cusp]{Upper bounds for the entropy in the cusp for one-parameter diagonal flows on $\SL_{\mathtt{d}}(\mathbb{R})/ \SL_{\mathtt{d}}(\mathbb{Z})$}
\author{Ron Mor}

\begin{abstract}
    We give explicit upper bounds for the entropy in the cusp for one-parameter diagonal flows on $\SL_{\dtt}(\mathbb{R})/\SL_{\dtt}(\mathbb{Z})$. These results include bounds for the entropy of the cusp as a whole, as well as for the cusp regions corresponding to either the maximal parabolic subgroups of $\SL_{\dtt}$, or the minimal (Borel) parabolic subgroup.
    To do so, we use a method which involves choosing an auxiliary linear functional on the Lie algebra of the Cartan group, specifically tailored to each of the bounds we are interested in.
    In a follow-up paper we prove
    that the upper bounds we obtain in this work are tight.
\end{abstract}
\thanks{This work was supported by ERC 2020 grant HomDyn (grant no.~833423)}
\address{The Einstein Institute of Mathematics\\
	Edmond J. Safra Campus, Givat Ram, The Hebrew University of Jerusalem
	Jerusalem, 91904, Israel}
	
\maketitle

\section{Introduction}
\subsection{Motivation and background}\label{subsec:1.1}
This paper is the second in a sequence of papers in which we study the entropy of
any one-parameter diagonal subgroup $\att_{\bullet}=\{\mathbf{a}_{t}\}_{t\in\mathbb{R}}$ of $\SL_{\mathtt{d}}(\mathbb{R})$, with positive entries on the diagonal, on the space $\SL_{\mathtt{d}}(\mathbb{R})/ \SL_{\mathtt{d}}(\mathbb{Z})$ of unimodular lattices in $\mathbb{R}^{\mathtt{d}}$.
In this paper, we use the results and methods of~\cite{mor2022a} to deduce explicit
upper bounds for the entropy of the cusp, corresponding to certain cusp regions separately and to the cusp as a whole. 
In a follow-up paper~\cite{mor2022c} we prove these upper bounds are tight.

The question of estimating the entropy of the cusp, i.e.\ the highest amount of entropy which can be carried by a sequence of invariant probability measures which converges to the zero measure, was studied in various settings by many authors, especially considering upper bounds.
For a more detailed introduction to these works, see~\cite{mor2022a}.
Some of these works computed upper bounds for the entropy of measures before going to the limit, and then applied these results to bound the limit entropy.
These include the paper~\cite{einsiedler2011distribution} for the study of $\SLdRZ$ in the rank $1$ case $\dtt=2$, as well as the papers~\cite{einsiedler2012entropy,kadyrov2017singular} for the higher rank case $\dtt\geq 3$ when the action $\att_{t}$ has only two distinct eigenvalues -- first in \cite{einsiedler2012entropy} for the action of $\att_{t}=\diag(e^{t/2},e^{t/2},e^{-t})$ on $\SL_{3}(\mathbb{R})/\SL_{3}(\mathbb{Z})$, and later in~\cite{kadyrov2017singular} for the action of \[\att_{t}=\diag(e^{nt},\ldots,e^{nt},e^{-mt},\ldots,e^{-mt})\] on $\SL_{m+n}(\mathbb{R})/\SL_{n+m}(\mathbb{Z})$ for general $m,n\geq 1$.
This approach was also used in~\cite{einsiedler2015escape} to study diagonal actions on spaces $G/\Gamma$ where $G$ is a connected semisimple real Lie group of rank 1 with
finite center and $\Gamma$ is a lattice,
and in~\cite{mor2021excursions} to study the frame flow on geometrically finite hyperbolic orbifolds. 
This is also the direction we chose in~\cite{mor2022a} to study the action of any one-parameter diagonal subgroup with positive entries on the diagonal on $\SLdRZ$. 
Other works considered the entropy in the cusp directly, such as~\cite{iommi2015entropy,riquelme2017escape} for the entropy in the cusp for geometrically finite Riemannian manifolds with pinched negative sectional curvature
and uniformly bounded derivatives of the sectional curvature.

Another type of questions discussed in previous works is the study of the Hausdorff dimension of the set of singular systems of linear forms. It is related to dynamics of one-parameter diagonal groups acting on $\SLdRZ$ by a relation known as Dani's correspondence~\cite{dani1985divergent}, and its generalizations by Kleinbock~\cite{kleinbock1998bounded,kleinbock1998flows}. 
In particular we note the works~\cite{cheung2011hausdorff,cheung2016hausdorff,das2019variational} for the unweighted case (where the acting transformation $\att_{t}$ has only two distinct eigenvalues), and the work~\cite{liao2016hausdorff} for general one-parameter actions in the particular case $\dtt=3$.
This aspect was also discussed in the papers~\cite{einsiedler2011distribution,kadyrov2017singular} via Dani's correspondence.
A notable recent work in this direction by Solan 
is~\cite{solan2021parametric},
where
the Hausdorff dimension with respect to the expansion metric was studied, for general one-parameter diagonal groups in higher rank. These results 
are related to one of the results we report in this work, namely the 
computation of the entropy for the cusp as a whole, as will be mentioned later on. 
We make use of some of the results from~\cite{solan2021parametric} in our follow-up paper~\cite{mor2022c} to prove
our estimates are sharp.

\medskip
In this paper, we find upper bounds for the entropy of the cusp, for general one-parameter diagonal actions on $\SLdRZ$. 
We make use of a new method developed in~\cite{mor2022a},
which allows us to include an auxiliary linear functional in the entropy computations. Apart from giving new results for the entropy before going to the limit~\cite{mor2022a}, 
this method allows for the entropy in the cusp to be computed in a simple combinatorical manner, 
without having to account for all of the different possible trajectories in the cusp.

In order to apply this method, one needs to choose carefully the linear functional to include, in order to optimize the bound we get for the entropy. This is the goal of this paper. In a follow up paper~\cite{mor2022c} we will show that our estimates are optimal (c.f.\ \cite[Theorem 1.6]{mor2022a}).
As in~\cite{mor2022a}, we especially consider
the finer structure of the cusp in
these higher rank spaces, stemming from
the natural compactification of $G/\Gamma$ --- the reductive Borel-Serre compactification --- where one adds a subvariety at infinity to $G/\Gamma$ for each standard parabolic proper
subgroup of $G$. 
In this paper (as well as \cite{mor2022a,mor2022c}) we estimate the cusp entropy originating from the different cusp regions separately, rather than only
for the cusp as a whole, a feature that was not discussed in the previous literature
on the subject.
We find explicit upper bounds for the entropy in three main cases: (\emph{i}) for the cusp as a whole, (\emph{ii}) for the regions corresponding to the maximal parabolic subgroups, and (\emph{iii}) for the region of the minimal (Borel) parabolic subgroup.

\subsection{Basic setup}\label{section:introduction}

Let $G=\SL_{\mathtt{d}}(\mathbb{R})$ and $\Gamma=\SL_{\mathtt{d}}(\mathbb{Z})$. 
Let $A\leq G$ be the subgroup of diagonal matrices with positive entries on the diagonal, and let $\{\att_{t}\}_{t\in\mathbb{R}}\leq A$ be a one-parameter diagonal flow on $G/\Gamma$, namely $\att_t=\exp(t\mathbf{\upalpha})$ for all $t$, for some $\upalpha\in \Lie(A)$. We write $\upalpha=\diag(\upalpha_1,\ldots,\upalpha_\dtt)$ with $\sum_{i=1}^{\dtt} \upalpha_i=0$.

Let $\mathcal{P}$ be the set of 
standard $\mathbb{Q}$-parabolic subgroups of $\SL_{\mathtt{d}}$. Recall that these subgroups 
are the groups of upper-triangular block matrices with real entries and determinant $1$.

Let $T$ be the full diagonal subgroup of $G$. 
For any $P\in\mathcal{P}$ let $W(T,P)=N_P(T)/C_{P}(T)$, where $N_P(T)$ and $C_{P}(T)$ are the normalizer and centralizer of $T$ in $P$, respectively.
In particular, $W=W(T,G)$ is the Weyl group of $G$. 
We consider the right action of $W$ on $T$ by conjugation, where the action of $w=nC_{G}(T)$ on $a\in T$ is denoted by $a^{w}$ and is given by $a^{w}\coloneqq n^{-1}an$, independently of the representative $n$. 
In our settings, 
$W$ is isomorphic to the symmetric group $S_{\mathtt{d}}$,
and
the action of $\sigma\in S_{\dtt}\cong W$ on $A$ reads as 
\[\diag(a_1,\ldots,a_{\dtt})^{\sigma}=\diag(a_{\sigma_1},\ldots,a_{\sigma_\dtt}).\]
Note that $W$ acts on $\Lie(A)$ by permutations as well.

We consider quotients of $W$ by two different subgroups. First, in the case that there are multiplicities in the multiset of eigenvalues of $\att_{t}$, different elements in the Weyl group can conjugate $\att_{t}$ to the same element. To compensate for this it will be convenient to divide $W$ by 
$\stab_{W}(\att)$, i.e.\ the 
stabilizer in $W$ of the
time-one map $\att\coloneqq \att_{1}=\exp(\upalpha)$.
Next, for a parabolic subgroup $P\in\mathcal{P}$, we also divide $W$ by the subgroup $W(T,P)$ of permutations which preserve the block structure of $P$, in the sense that $\sigma,\tau\in W$ are identified in $W/W(T,P)$
if their actions on any $a\in A$ are the same up to a permutation which preserves the multiset of values of $a$ in each block of $P$. 
Finally, we define the double-quotient 
\[W_{P,\att}=\stab_{W}(\att)\backslash W/W(T,P).\]
For $w\in W$, we let $[w]_P$ stand for the double-coset in $W_{P,\att}$ corresponding to $w$ (since $\att$ is considered fixed, we omit it from this notation).

Consider any subgroup $H_{S}\subseteq G$ of the form \[H_{S}=(I+\bigoplus_{(i,j)\in S} U_{ij}) \cap G\] for some set $S\subseteq\{1,\ldots,\dtt\}^{2}$, where $U_{ij}$ is the set of matrices with zeros at all entries except possibly the $(i,j)$th entry. This includes the parabolic subgroups of $G$. 
For such a subgroup $H_{S}$, and for any diagonal matrix $a=\exp(\alpha)\in A$, where $\alpha=\diag(\alpha_1,\ldots,\alpha_{\mathtt{d}})\in \Lie(A)$, we define the entropy of $a$ on $H_{S}$ as the sum of (positive) Lyapunov exponents
\begin{align*}
    &h(H_S,a)=\sum_{(i,j)\in S} (\alpha_i-\alpha_j)^{+}, 
\end{align*}
where $z^{+}=\max(z,0)$ for any $z\in\mathbb{R}$.

Next, for a parabolic subgroup $P\in\mathcal{P}$, let $A_P<A$ be the identity component of the centralizer of the Levi part of $P$, or more concretely the subgroup of $P$ consisting of block scalar matrices on the diagonal, with positive entries. 
For any $P\in\mathcal{P}$ and $[w]_{P}\in W_{P,\att}$ we define 
the projection of $\upalpha^{w}$ from $\Lie(A)$ to $\Lie(A_P)$ by
\[\pi_P(\upalpha^{w})=\frac{1}{| W(T,P)|}\sum_{u\in W(T,P)}\upalpha^{wu}.\]
Then,
for $\phi\in \Lie(A)^{\ast}$, we define
\begin{equation*}(h-\phi)([w]_P)=h(P,\att^{w})-\phi(\pi_P(\upalpha^{w})).\end{equation*}
Note that $h(P,\att^{w})$ and $\pi_{P}(\upalpha^{w})$ depend only on the coset $[w]_{P}$ rather than on $w$.

\subsection{Entropy in the cusp}\label{subsec:1.33}
The entropy in the cusp is defined as
\[h_{\infty}(\att)=\sup\left\{\limsup_{i\to\infty} h_{\mu_i}(\att):\ \mu_i\in M_{1}(G/\Gamma)\ \att\text{-invariant},\ \mu_i\rightharpoonup 0\right\},\]
where for any locally compact metric space $Y$ we use $M_{1}(Y)$ to denote the space of Borel probability measures on $Y$.

In~\cite{mor2022a} we introduced the following more refined notions for the entropy in the cusp, using the reductive Borel-Serre (RBS) compactification of $G/\Gamma$~\cite{borel2006compactifications}. This compactification may be described as a disjoint union \[\overline{G/\Gamma}^{\RBS}=\coprod_{P\in\mathcal{P}}e_{\infty}(P).\] The set $e_{\infty}(G)$ in the above disjoint union is equal to $G/\Gamma$, and is open and dense according to the topology of $\overline{G/\Gamma}^{\RBS}$. Without getting into details (see~\cite{borel2006compactifications,mor2022a} for more), in this compactification the part of $G/\Gamma$ which is close to the boundary component $e_{\infty}(P)$ is the set of lattices which have a unique flag of linear subspaces of $\mathbb{R}^{\dtt}$ of small covolume, with $P$ determining the dimensions of the subspaces in the flag.
Then, we define the entropy in the cusp region associated to $P\in\mathcal{P}$ by
\[h_{\infty,P}(\att)=\sup\left\{\limsup_{i\to\infty} h_{\mu_i}(\att):\ \mu_i\in M_{1}(G/\Gamma)\ \att\text{-invariant},\ \mu_i\rightharpoonup \nu\in M_{1}(e_{\infty}(P))\right\}.\]
Furthermore, we can consider a similar notion for the entropy in the cusp associated to all parabolic subgroups contained in $P$, given by
\begin{align*}
h_{\infty,\subseteq P}(\att)&=\sup\bigg\{\limsup_{i\to\infty} h_{\mu_i}(\att):\ \mu_i\in M_{1}(G/\Gamma)\ \att\text{-invariant},\\& 
\hspace{7.1cm}\mu_i\rightharpoonup \nu\in M_{1}(\bigcup_{Q\subseteq P}e_{\infty}(Q))\bigg\}.\end{align*}

In~\cite{mor2022a} we proved the following result regarding bounds for $h_{\infty}(\att)$, $h_{\infty,P}(\att)$ and $h_{\infty,\subseteq P}(\att)$, using auxiliary linear functionals. This is the key ingredient we use in this current paper.
\begin{theorem}[\cite{mor2022a}]\label{theorem:1.0}
Let $\att=\exp(\upalpha)\in A$, and $\phi\in \Lie(A)^{\ast}$. Then
\begin{enumerate}
    \item The entropy in the cusp is bounded by
\[h_{\infty}(\att)\leq \max_{P\in\mathcal{P}\smallsetminus\{G\}}\max_{[w]_P\in W_{P,\att}}\Big(h(P,\att^w)-\phi(\pi_{P}(\upalpha^{w}))\Big).\]
\item
For all parabolic subgroups $P\in \mathcal{P}$,
\[\mathit{(i)}\qquad h_{\infty,P}(\att)\leq \max_{[w]_P\in W_{P,\att}}\Big(h(P,\att^w)-\phi(\pi_{P}(\upalpha^{w}))\Big)\]
\[\mathit{(ii)}\qquad h_{\infty,\subseteq P}(\att)\leq \max_{\substack{Q\in\mathcal{P}:\\ Q\subseteq P}}\max_{[w]_Q\in W_{Q,\att}}\Big(h(Q,\att^w)-\phi(\pi_{Q}(\upalpha^{w}))\Big)\]    
\end{enumerate}
\end{theorem}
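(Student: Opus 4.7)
The plan is to combine reduction theory with a Margulis--Ruelle type entropy estimate that is refined by the auxiliary linear functional $\phi$. Since Theorem~\ref{theorem:1.0} is attributed to \cite{mor2022a}, what follows is the route I would take to reprove it in this paper's notation.

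First, I would fix an $\att$-invariant probability measure $\mu$ and decompose $G/\Gamma$ using Siegel sets into a compact part together with cusp neighborhoods $\mathcal{C}_{P}$ indexed by the proper standard $\mathbb{Q}$-parabolic subgroups $P\in\mathcal{P}\smallsetminus\{G\}$. On each $\mathcal{C}_{P}$ I would introduce a height function $\Phi_{P}$ built from logarithms of covolumes of the flag of subspaces associated to $P$; the key feature is that $\Phi_{P}(\att\cdot x)-\Phi_{P}(x)$ coincides, up to a uniformly bounded error on $\mathcal{C}_{P}$, with a fixed linear functional evaluated on $\pi_{P}(\upalpha)\in\Lie(A_{P})$, and more generally $\phi\circ\pi_{P}$ can be realized this way for any $\phi\in\Lie(A)^{\ast}$.

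The second step is a local entropy estimate adapted to each cusp neighborhood. After conjugating by a Weyl element $w$ so that $\att^{w}$ sits in standard position relative to the block structure of $P$, the flow $\att^{w}$ expands the unipotent radical of $P$ with positive Lyapunov exponents summing to exactly $h(P,\att^{w})$. Refining the Margulis--Ruelle inequality by using partitions compatible with the Siegel set decomposition, one bounds the contribution to $h_{\mu}(\att)$ coming from $\mathcal{C}_{P}$ by the $\mu$-mass of $\mathcal{C}_{P}$ times $h(P,\att^{w})$. The double coset $W_{P,\att}$ arises because $\stab_{W}(\att)$ identifies Weyl elements producing the same flow and $W(T,P)$ identifies those preserving the block structure of $P$, so the conjugating element is only defined up to $W_{P,\att}$.

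The third step, which is the novelty of \cite{mor2022a}, incorporates $\phi$. Using $\att$-invariance of $\mu$ and the pointwise ergodic theorem applied to $\Phi_{P}\circ\att-\Phi_{P}$, one obtains a balance relation linking the $\mu$-mass of $\mathcal{C}_{P}$ to $\phi(\pi_{P}(\upalpha^{w}))$. Subtracting this from the bound of the previous step produces an inequality of the form
\[h_{\mu}(\att)\leq \max_{P,[w]_{P}}\Bigl(h(P,\att^{w})-\phi(\pi_{P}(\upalpha^{w}))\Bigr)+\varepsilon(\mu),\]
where the error $\varepsilon(\mu)$ depends only on the $\mu$-mass of the compact part (for (1) and (2)(ii)) or on the $\mu$-mass outside the specific boundary component $e_{\infty}(P)$ (for (2)(i)). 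Applying this to the sequence $\mu_{i}$ and taking $\limsup$, the hypothesis $\mu_{i}\rightharpoonup 0$, respectively $\mu_{i}\rightharpoonup \nu\in M_{1}(e_{\infty}(P))$ or $\mu_{i}\rightharpoonup\nu\in M_{1}(\bigcup_{Q\subseteq P}e_{\infty}(Q))$, drives $\varepsilon(\mu_{i})\to 0$ in the respective case.

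The hard part is the careful choice of the height functions $\Phi_{P}$ and a matching filtration of partitions so that the three statements can be extracted from essentially one inequality; in particular, for (2)(i) one has to arrange that mass escaping into $e_{\infty}(Q)$ for $Q\neq P$ does not contribute in the limit, which requires tracking all flags simultaneously rather than a single $P$-flag. The remaining bookkeeping (uniformity of the bounded errors across Siegel sets, the passage from $\limsup$ to a single-measure inequality, and invariance of $h(P,\att^{w})$ and $\pi_{P}(\upalpha^{w})$ under the double coset $W_{P,\att}$) is routine once the setup is in place.
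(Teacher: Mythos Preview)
The paper does not contain a proof of Theorem~\ref{theorem:1.0}; it is quoted verbatim from the companion paper \cite{mor2022a} and used here as a black box. Consequently there is no proof in this paper against which to compare your proposal.

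That said, a few comments on your sketch. The broad architecture you describe --- reduction theory via Siegel sets, height functions built from covolumes of the partial flags, a Margulis--Ruelle style local entropy bound, and a cancellation coming from $\att$-invariance of $\mu$ --- is indeed the shape one expects for results of this type (cf.\ \cite{einsiedler2012entropy,kadyrov2017singular}). However, two points are underspecified in a way that matters. First, in your Step~1 you write that $\Phi_{P}(\att\cdot x)-\Phi_{P}(x)$ agrees up to bounded error with a fixed linear functional of $\pi_{P}(\upalpha)$; in fact the relevant functional depends on \emph{which} Weyl chamber governs the flag of $x$, and this is precisely where the parameter $[w]_{P}\in W_{P,\att}$ enters --- it is not a conjugation you choose but a decomposition of the cusp region $\mathcal{C}_{P}$ into sectors indexed by $W_{P,\att}$. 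Second, in Step~3 you invoke the ergodic theorem on $\Phi_{P}\circ\att-\Phi_{P}$ to obtain a ``balance relation''; for this to yield $\int (\Phi_{P}\circ\att-\Phi_{P})\,d\mu=0$ one needs $\Phi_{P}\in L^{1}(\mu)$, which fails in general since $\Phi_{P}$ is unbounded. The actual mechanism is typically a truncation or a coboundary argument at the level of partitions rather than a direct application of Birkhoff. These are not fatal, but as written your Step~3 is the place where the argument would break down without further work.
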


\subsection{Results}\label{sec:introduction}

As mentioned above, in this paper we find specific linear functionals, which depend on the flow $\att_{\bullet}$, and are suitable to maximize the entropy in the cusp for three cases: (\emph{i}) $h_{\infty}(\att)$, (\emph{ii}) $h_{\infty,\subseteq P}(\att)$ for maximal parabolic subgroups $P$ (and consequently $h_{\infty,P}(\att)$ as well), and (\emph{iii}) $h_{\infty,B}(\att)$ for the (minimal) Borel subgroup $B\in\mathcal{P}$. In a follow-up paper~\cite{mor2022c} we prove that the upper bounds on entropy are sharp, as well as that $h_{\infty,P}(\att)=h_{\infty,\subseteq P}(\att)$ for maximal parabolic groups $P$.

Let us state our results.
The most interesting structure occurs for the case of maximal parabolic subgroups as discussed in the following theorem.
Here $P_k$ stands for the (maximal parabolic) group
of upper triangular block matrices with two blocks, the first one being of size $k$ and the other of size $\dtt-k$.
\begin{theorem}\label{theorem:1.1}
Let $\att=\exp(\upalpha)\in A$.
Assume without loss of generality that $\upalpha_{i}\geq \upalpha_{i+1}$ for all $1\leq i\leq \dtt-1$. 
For all $1\leq k\leq \dtt/2$, let $m_k$ be the minimal integer such that \[\sum_{i=1}^{k} \upalpha_{m_k+2(i-1)}\geq0\geq\sum_{i=1}^{k}\upalpha_{m_k+2(i-1)+1}.\]
Then
    \begin{equation}\label{eq:1.1}h_{\infty,\subseteq P_{k}}(\att)\leq h(G,\att)-k\sum_{i=1}^{m_k}\upalpha_{i}-\sum_{i=1}^{k-1}(k-i)(\upalpha_{m_k+2i-1}+\upalpha_{m_k+2i}).\end{equation}
Furthermore, the same upper bound holds for $h_{\infty,\subseteq P_{\dtt-k}}(\att)$ as well.
\end{theorem}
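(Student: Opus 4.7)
The plan is to invoke Theorem~\ref{theorem:1.0}(2)(ii) with a linear functional $\phi\in\Lie(A)^*$ carefully chosen to depend on $\att$, $k$, and $m_k$. This reduces (\ref{eq:1.1}) to the following discrete inequality: for every parabolic $Q\subseteq P_k$ and every double coset $[w]_Q\in W_{Q,\att}$,
\[h(Q,\att^w)-\phi(\pi_Q(\upalpha^w))\leq h(G,\att)-k\sum_{i=1}^{m_k}\alpha_i-\sum_{i=1}^{k-1}(k-i)\bigl(\alpha_{m_k+2i-1}+\alpha_{m_k+2i}\bigr).\]
The coefficient pattern on the right-hand side---weight $k$ on each of $\alpha_1,\ldots,\alpha_{m_k}$ and then descending weights $(k-1,k-1,k-2,k-2,\ldots,1,1)$ on the consecutive pairs starting at position $m_k+1$---guides the construction of $\phi$: a natural ansatz is a linear combination of the fundamental weights, tuned so that equality is attained at the ``interlaced'' extremizers---parabolics $Q\subseteq P_k$ whose block decomposition separates the index pairs $\{m_k+2i-1,m_k+2i\}$ (for $i=1,\ldots,k-1$) into $2$-element blocks, in accordance with the defining property of $m_k$.

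The verification of the inequality proceeds by rewriting $h(Q,\att^w)=h(G,\att)-D_Q(w)$, where
\[D_Q(w)=\sum_{(i,j)\,:\,\text{block}_Q(i)>\text{block}_Q(j)}(\alpha_{w(i)}-\alpha_{w(j)})^+\]
collects the positive Lyapunov exponents lost when restricting from $G$ to $Q$ (i.e.\ those coming from the ``lower-triangular'' pairs of block positions). The bound then becomes
\[D_Q(w)+\phi(\pi_Q(\upalpha^w))\geq k\sum_{i=1}^{m_k}\alpha_i+\sum_{i=1}^{k-1}(k-i)\bigl(\alpha_{m_k+2i-1}+\alpha_{m_k+2i}\bigr),\]
which I would prove by an interchange/swap argument: starting from an arbitrary pair $(Q,[w]_Q)$, one applies local moves (transposing indices between adjacent blocks of $Q$, or permuting the assignment within a block) that transform it into one of the conjectured extremizers, and shows that each such move preserves or strictly decreases the gap between the two sides, using the monotonicity $\alpha_1\geq\cdots\geq\alpha_\dtt$ together with the two defining inequalities of $m_k$.

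For the ``furthermore'' claim on $h_{\infty,\subseteq P_{\dtt-k}}(\att)$, I would invoke the symmetry $\iota\colon G/\Gamma\to G/\Gamma$ defined by $g\Gamma\mapsto w_0(g^{\top})^{-1}\Gamma$, where $w_0$ is the longest Weyl element. One checks that $\iota$ is a measure-preserving diffeomorphism, interchanges the cusp regions corresponding to $P_k$ and $P_{\dtt-k}$, and intertwines the $\att$-action with the $\tilde\att$-action, where $\tilde\upalpha=(-\alpha_\dtt,\ldots,-\alpha_1)$ is again sorted in decreasing order. Consequently $h_{\infty,\subseteq P_{\dtt-k}}(\att)=h_{\infty,\subseteq P_k}(\tilde\att)$, and (\ref{eq:1.1}) applied to $\tilde\upalpha$ yields a bound of the same form but involving $\tilde m_k$. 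The argument concludes by the arithmetic identity that the right-hand side of (\ref{eq:1.1}) is invariant under the substitution $(\upalpha,m_k)\mapsto(\tilde\upalpha,\tilde m_k)$, which follows from a direct computation---in particular, from the observation that the change in the penalty under $m\mapsto m+1$ equals $\sum_{i=1}^k\alpha_{m+2i-1}$, vanishing throughout the range of $m$'s satisfying both defining conditions of $m_k$.

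The main obstacle will be the combinatorial verification of the inequality, given the factorial growth of the set of pairs $(Q,[w]_Q)$; a structural (rather than brute-force) argument is essential. I expect the crux to be a convenient parameterization of $[w]_Q$ as an ordered set partition of $\{1,\ldots,\dtt\}$ aligned with the blocks of $Q$, together with a careful tracking of the combined effect on $D_Q$ and $\phi(\pi_Q)$ under the elementary moves---whose net effect should ultimately be governed by the same two signed sums $\sum_{i=1}^k\alpha_{m_k+2(i-1)}$ and $\sum_{i=1}^k\alpha_{m_k+2(i-1)+1}$ appearing in the definition of $m_k$.
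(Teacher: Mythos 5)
Your proposal has the right outer shape (invoke Theorem~\ref{theorem:1.0}(2)(\emph{ii}) with a functional $\phi$ tailored to $k$ and $m_k$, then verify a purely combinatorial inequality), but the core of the proof is left as a plan, and the plan contains a misidentification that would derail it. The paper's $\phi$ is an explicit multiple of a single fundamental weight, $\phi=\frac{k(\dtt-k)}{\dtt}(m_k+k-1)\bigl(\frac{1}{k}\sum_{i\leq k}\lambda_i-\frac{1}{\dtt-k}\sum_{i>k}\lambda_i\bigr)$; because this factors through the two block averages, $\phi(\pi_Q(\cdot))=\phi(\pi_{P_k}(\cdot))$ for every $Q\subseteq P_k$, and entropy monotonicity then collapses the whole family of sub-parabolics to the single case $Q=P_k$ (Corollary~\ref{corollary:2.4}). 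The extremizer is therefore $Q=P_k$ itself with $w=\tau^{s}$ placing the values $\upalpha_{m_k},\upalpha_{m_k+2},\ldots,\upalpha_{m_k+2(k-1)}$ in the first block --- the pairs $\{m_k+2i-1,m_k+2i\}$ straddle the block boundary of $P_k$, they are not $2$-element blocks of a finer parabolic as you suggest. Tuning $\phi$ to your conjectured extremizers would give the wrong coefficient. More importantly, the ``interchange/swap argument'' you defer to is the entire content of the theorem: the paper organizes it by plotting $w\mapsto(\psi_k(w),h_k(w))$ in the plane, showing the upper boundary of the convex hull of the image is an explicit monotone path $\iota(\tau^{1}),\ldots,\iota(\tau^{\dtt})$ built from adjacent transpositions of controlled slope $\frac{k(\dtt-k)}{\dtt}(j'-j)$, and reading off both the optimal slope $c$ and the value of $\max_w(h-c\psi_k)$ from the edge crossing $\psi_k=0$. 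Without this (or an executed substitute), the inequality for all $(Q,[w]_Q)$ is unproven.

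Your route to the ``furthermore'' claim via $g\Gamma\mapsto w_0(g^{\top})^{-1}\Gamma$ is a genuinely different and in principle viable idea, but as written it imports unverified facts: that this map sends $\bigcup_{Q\subseteq P_{\dtt-k}}e_{\infty}(Q)$ to $\bigcup_{Q\subseteq P_{k}}e_{\infty}(Q)$ in the RBS compactification, that it preserves metric entropy while intertwining $\att$ with $\tilde\att$, and the arithmetic identity matching the two right-hand sides (your observation that the penalty is constant in $m$ over the admissible range is correct and does help here, but the full identity relating $m_k$ for $\upalpha$ to $\tilde m_k$ for $\tilde\upalpha$ still needs to be checked). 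The paper avoids all of this by staying combinatorial: it relates $h(P_k,\att^{w})$ to $h(P_{\dtt-k},\att^{\tilde w})$ directly via $h(U_{Q}^{T},\cdot)-h(U_{Q},\cdot)=-k(\dtt-k)\psi_{\tilde k}$ and compensates by shifting the constant in the linear functional, so that the maximum over $W$ is literally the same number already computed for $P_{\dtt-k}$.
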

Note that after some formal manipulations, the right hand side of Equation~\eqref{eq:1.1} equals 
\[
h(G,\att)-
\sum_{i=1}^{k-1}(k-i)(\upalpha_{2i-1}+\upalpha_{2i})-\sum_{j=1}^{m_k}\sum_{i=1}^{k}\upalpha_{j+2i-2}
\]
which was the form initially announced in \cite{mor2022a}.

As can be deduced from Equation~\eqref{eq:1.1}, the upper bound we found for $h_{\infty,\subseteq P_k}(\att)$ is monotone decreasing with $1\leq k\leq \dtt/2$, so that the largest bounds are obtained for the maximal parabolic subgroups $P_1$ and $P_{\dtt-1}$. Considering that the special constant $m_1$ is equal to the number of positive entries of $\upalpha$, we deduce that 
\[h_{\infty,\subseteq P_1}(\att)\leq h(G,\att)-\sum_{i=1}^{\dtt}\upalpha_{i}^{+},\]
and similarly for $h_{\infty,\subseteq P_{\dtt-1}}(\att)$.
Our next theorem asserts that this value is an overall bound for the entropy in the cusp, without limitation on the parabolic subgroups. 
This is the result we mentioned has some similarities with Solan's work~\cite{solan2021parametric}.
\begin{theorem}\label{theorem:1.3}
Let $\att=\exp(\upalpha)\in A$. Then
\[h_{\infty}(\att)\leq h(G,\att)-\sum_{i=1}^{\dtt}\upalpha_{i}^{+}.\]
\end{theorem}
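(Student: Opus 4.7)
The plan is to apply Theorem~\ref{theorem:1.0}(1) with a carefully chosen linear functional $\phi \in \Lie(A)^{*}$ depending on $\upalpha$. Sort the entries so that $\upalpha_1 \geq \cdots \geq \upalpha_\dtt$ and let $p$ be the number of strictly positive ones, so that $\sum \upalpha_i^+ = \sum_{i=1}^p \upalpha_i$. The goal is to produce a single $\phi$ satisfying the uniform bound
\[
h(P, \att^w) - \phi(\pi_P(\upalpha^w)) \leq h(G, \att) - \sum_{i=1}^p \upalpha_i
\]
for every proper parabolic $P \in \mathcal{P}\setminus\{G\}$ and every double coset $[w]_P \in W_{P,\att}$; Theorem~\ref{theorem:1.0}(1) then yields the conclusion.

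The candidate is supported on the first $p$ coordinates, of the form $\phi(\beta) = \sum_{i=1}^p c_i \beta_i$, with the $c_i$ determined by $\upalpha$. It is modelled on the functional used to prove the $k=1$ case of Theorem~\ref{theorem:1.1}, whose right-hand side already coincides with the target value $h(G, \att) - \sum \upalpha_i^+$; the $c_i$ are then adjusted so that the inequality holds uniformly across all proper parabolics, not merely those contained in $P_1$. Using the identity that $h(G, \att) - h(P, \att^w)$ is equal to the sum of positive Lyapunov exponents $(\upalpha_{w(i)} - \upalpha_{w(j)})^+$ taken over the strictly block-lower-triangular positions $(i,j)$ of $P$, the target rearranges to
\[
\sum_{(i,j)\text{ strictly block-lower in }P}(\upalpha_{w(i)} - \upalpha_{w(j)})^+ + \phi(\pi_P(\upalpha^w)) \geq \sum_{i=1}^p \upalpha_i.
\]
For the identity coset $w = e$ the first sum vanishes (by the sorted assumption on $\upalpha$), and the inequality reduces to $\phi(\pi_P(\upalpha)) \geq \sum_{i=1}^p \upalpha_i$ for every proper $P$; for $w \neq e$, the positive contributions from block-lower positions provide the slack needed to compensate for the decrease of $\phi(\pi_P(\upalpha^w))$ under nontrivial block averaging.

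The verification proceeds by a combinatorial case analysis on the block partition $(k_1,\ldots,k_r)$ of $P$ and on the representative $w$, comparing block by block the two sides of the rearranged inequality. The main obstacle is finding coefficients $c_i$ that satisfy all the constraints simultaneously. Two competing pressures arise. On the one hand, for maximal parabolics $P_k$ with $k > \max(p, \dtt - p)$, the first block of $P_k$ averages together positive and non-positive entries of $\upalpha$, heavily diluting $\pi_P(\upalpha)$ and forcing the $c_i$ to be collectively large in order to keep $\phi(\pi_P(\upalpha))$ above $\sum_{i=1}^p \upalpha_i$. On the other hand, for certain non-trivial cosets $[w]_P$ associated with sub-parabolics of $P_1$, the same coefficients $c_i$ are capped from above so as to keep $h(P,\att^w) - \phi(\pi_P(\upalpha^w))$ below the target. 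Balancing these opposing constraints and verifying that the resulting maximum in Theorem~\ref{theorem:1.0}(1) indeed equals $h(G, \att) - \sum \upalpha_i^+$ is the technical heart of the argument, and is what determines the explicit form of the $c_i$ in terms of the sign pattern and relative magnitudes of the entries of $\upalpha$.
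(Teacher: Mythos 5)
Your proposal is a plan rather than a proof: the coefficients $c_i$ are never written down, and the ``technical heart of the argument'' --- verifying that a single functional satisfies all the competing upper and lower constraints simultaneously --- is exactly the content of the theorem and is left unexecuted. Identifying the tension between the constraints is not the same as resolving it, and nothing in the proposal guarantees that a functional of your proposed form exists.

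There is also a structural reason to doubt the specific ansatz $\phi(\beta)=\sum_{i=1}^{p}c_i\beta_i$ supported on the first $p$ coordinates. The paper's functional is $\phi_{\all}=z_1\lambda_1+z_{\dtt}\lambda_{\dtt}$, supported on only the \emph{first and last} coordinates, with $z_1=\frac{\dtt-1}{\dtt-2}(m-1)$ and $z_{\dtt}=-\frac{\dtt-1}{\dtt-2}(\dtt-m-1)$ for an index $m$ with $\upalpha_m\geq 0\geq\upalpha_{m+1}$. This choice is what makes the crucial reduction to maximal parabolics work: for any $Q$ with several blocks, merging the middle blocks leaves $\phi_{\all}\circ\pi_Q$ unchanged (it sees only the first and last blocks), so monotonicity of entropy immediately gives $(h-\phi_{\all})([w]_Q)\leq(h-\phi_{\all})([w]_{Q'})$ and one only has to control the $\dtt-1$ maximal parabolics. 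A functional spread over the first $p$ coordinates destroys this invariance --- $\pi_Q$ averages within blocks that straddle the index $p$ in ways that depend on the whole partition --- so your ``combinatorial case analysis on the block partition $(k_1,\ldots,k_r)$'' would have to treat all proper parabolics and all cosets directly, a far heavier task with no evidence it closes. Even restricted to maximal parabolics, the remaining work in the paper (the convex-hull identification of the extremal coset $\tau^{\lfloor c\rfloor+1}$ for each $P_k$, and the formal inequalities of Proposition~\ref{proposition:3.7} proved via Lemma~\ref{lemma:new.4} and Proposition~\ref{proposition:new.5}) is substantial and has no counterpart in your sketch. As it stands the proposal does not constitute a proof.
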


Lastly, we found a linear functional which is suitable for the minimal (Borel) parabolic subgroup rather than the maximal parabolic subgroups.
This functional averages out the entropy for all the different elements in the Weyl group, giving a uniform entropy estimate of $\frac{1}{2}h(G,\att)$ to all directions. This generalizes the factor half obtained in~\cite{einsiedler2011distribution} for the entropy of the cusp in the $\dtt=2$ case. 
\begin{theorem}\label{theorem:1.4}
Let $B\in\mathcal{P}$ be the Borel subgroup, and let $\att\in A$. Then
\[h_{\infty,B}(\att)\leq\frac{1}{2}h(G,\att).\]
\end{theorem}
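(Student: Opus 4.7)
The plan is to apply Theorem~\ref{theorem:1.0}(2)(i) with $P=B$ and a judiciously chosen linear functional $\phi\in\Lie(A)^{\ast}$ that will equalize the quantity $h(B,\att^{w})-\phi(\upalpha^{w})$ across the entire Weyl group. First I would unwind the data attached to the Borel: the Levi part of $B$ is the torus $T$ itself, so $W(T,B)=N_{B}(T)/C_{B}(T)$ is trivial (the only permutation matrix lying in $B$ is the identity), the projection $\pi_{B}(\upalpha^{w})$ reduces to $\upalpha^{w}$, and $W_{B,\att}=\stab_{W}(\att)\backslash W$. Moreover $h(G,\att^{w})=h(G,\att)$ because $h(G,\cdot)$ is symmetric in the diagonal entries.

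So the task reduces to exhibiting a $\phi$ for which $h(B,\exp\upbeta)-\phi(\upbeta)\leq\tfrac{1}{2}h(G,\exp\upbeta)$ whenever $\upbeta=\upalpha^{w}$. My candidate is
\[
\phi(\upbeta)=\sum_{i=1}^{\dtt}\rho_{i}\upbeta_{i},\qquad\rho_{i}=\frac{\dtt-2i+1}{2},
\]
namely the pairing of $\upbeta$ against (half the sum of) the positive roots. The key observation is the scalar identity $x^{+}=\tfrac{1}{2}|x|+\tfrac{1}{2}x$, which, applied to $x=\upbeta_{i}-\upbeta_{j}$ and summed over all $i<j$, yields
\[
h(B,\exp\upbeta)=\sum_{i<j}(\upbeta_{i}-\upbeta_{j})^{+}=\tfrac{1}{2}\sum_{i<j}|\upbeta_{i}-\upbeta_{j}|+\tfrac{1}{2}\sum_{i<j}(\upbeta_{i}-\upbeta_{j}).
\]
The first term on the right is exactly $\tfrac{1}{2}h(G,\exp\upbeta)$, and the second, after collecting the coefficient of each $\upbeta_{k}$ (namely $\tfrac{1}{2}[(\dtt-k)-(k-1)]=\rho_{k}$), equals $\phi(\upbeta)$.

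In fact this is the pointwise identity $h(B,\exp\upbeta)-\phi(\upbeta)=\tfrac{1}{2}h(G,\exp\upbeta)$ for every $\upbeta\in\Lie(A)$, not merely for permutations of $\upalpha$. Substituting $\upbeta=\upalpha^{w}$ and using $h(G,\att^{w})=h(G,\att)$ gives $h(B,\att^{w})-\phi(\pi_{B}(\upalpha^{w}))=\tfrac{1}{2}h(G,\att)$ for every $w\in W$, so the maximum appearing in Theorem~\ref{theorem:1.0}(2)(i) equals $\tfrac{1}{2}h(G,\att)$, delivering the claimed bound. There is no real obstacle in this argument; the entire content is the recognition that the half-sum of positive roots is the linear functional that uniformizes $h(B,\att^{w})$ across the Weyl orbit, after which verification is the one-line identity $x^{+}=\tfrac{1}{2}(|x|+x)$.
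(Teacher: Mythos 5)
Your proposal is correct and is essentially the paper's own argument: your $\phi$ with coefficients $\rho_i=\tfrac{\dtt-2i+1}{2}$ is exactly the paper's $\phi_B=\tfrac{1}{2}\sum_{i<j}(\lambda_i-\lambda_j)$, and your identity $x^{+}=\tfrac{1}{2}(|x|+x)$ is the same as the paper's $z^{+}-\tfrac{1}{2}z=\tfrac{1}{2}(z^{+}+(-z)^{+})$, yielding the same uniform value $\tfrac{1}{2}h(G,\att)$ over the Weyl orbit before invoking Theorem~\ref{theorem:1.0}.
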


\subsection{Structure of the paper}
The paper is organized as follows. In~\S\ref{sec:borel} we present a short proof of the Borel result (Theorem~\ref{theorem:1.4}). Next, the result for maximal parabolic groups (Theorem~\ref{theorem:1.1}) is shown in~\S\ref{section:h_for_pmax}. Lastly, Theorem~\ref{theorem:1.3}, regarding the entropy of the entire cusp, is proved in~\S\ref{section:4} using some of the ideas of~\S\ref{section:h_for_pmax}.

\subsection{Acknowledgements} This paper is a part of the author's PhD studies in The Hebrew University of Jerusalem. I would like to thank my advisor, Prof.\ Elon Lindenstrauss, to whom I am grateful for his guidance and support throughout this work.

\section{Bounding entropy for the Borel subgroup}\label{sec:borel}
We begin with proving Theorem~\ref{theorem:1.4}, namely finding an upper bound for $h_{\infty,B}(\att)$.
To do so, we use a linear functional $\phi_{B}$
which averages out the entropy between all the different orientations, so that $(h-\phi_{B})([w]_{B})$ is 
the same for all $w\in W$.
Here, and throughout the paper, we let $\lambda_i\in \Lie(A)^{\ast}$ be the value of the $i$th diagonal entry $\lambda_{i}(\diag(\alpha_1,\ldots,\alpha_{\dtt}))=\alpha_i$, for any $1\leq i\leq \dtt$.
\begin{proof}[Proof of Theorem~\ref{theorem:1.4}]
Note that for any $w\in W$,
\[h(B,\att^{w})=\sum_{i<j}(\upalpha_{w_i}-\upalpha_{w_j})^{+}.\]
Let 
\[\phi_{B}=\frac{1}{2}\sum_{i<j}(\lambda_{i}-\lambda_{j}).\]
Then
\begin{align*}(h-\phi_{B})([w]_{B})&=\sum_{i<j}\Big[(\upalpha_{w_i}-\upalpha_{w_j})^{+}-\frac{1}{2}(\upalpha_{w_i}-\upalpha_{w_j})\Big]\\&
=\frac{1}{2}\sum_{i<j}\Big[(\upalpha_{w_i}-\upalpha_{w_j})^{+}+(\upalpha_{w_j}-\upalpha_{w_i})^{+}\Big]
=\frac{1}{2}\sum_{1\leq i,j\leq \dtt}(\upalpha_{w_i}-\upalpha_{w_j})^{+}\\&
=\frac{1}{2}\sum_{1\leq i,j\leq\dtt}(\upalpha_i-\upalpha_j)^{+}=\frac{1}{2}h(G,\att),
\end{align*}
where we used the fact that 
\[z^{+}-\frac{1}{2}z=
\frac{1}{2}(z^{+}+(-z)^{+}),\]
for any $z\in\mathbb{R}$.
This concludes the proof, using Theorem~\ref{theorem:1.0}.
\end{proof}

\section{Bounding entropy for maximal parabolic subgroups}\label{section:h_for_pmax}
In this section we find a suitable linear functional to use for bounding the entropy of maximal parabolic subgroups $P_k$, and compute the entropy bound deduced from it as in Theorem~\ref{theorem:1.1}. 

\subsection{Conventions}\label{subsec:2.1}
We introduce conventions that will be assumed for the rest of the paper.
First, we assume without loss of generality (by possibly changing the order of the standard basis elements) that $\upalpha_i\geq \upalpha_{i+1}$ for all $1\leq i\leq \mathtt{d}-1$, i.e.\ the elements on the diagonal of $\att$ are ordered from largest to smallest. 

As discussed in~\S\ref{section:introduction}, we think of the Weyl group $W$ as the group of permutations $S_{\dtt}$.
We take the convention that for any $[w]_{P}\in W_{P,\att}$ the representative $w\in W\cong S_{\dtt}$ is chosen so that its values are increasing in each block (for example, if $P=P_{k}$ is a maximal parabolic subgroup, this means $w_{1}<\cdots<w_{k}$ and $w_{k+1}<\cdots<w_{\dtt}$). 
This ensures that the diagonal of $\att^{w}$ is monotonically decreasing in each block of $P$.
If there are multiplicities in the elements on the diagonal of $\att$ (namely $\stab_{W}(\att)\not=\{id\}$), there may be several suitable representatives, and we choose one of them arbitrarily (in this case, $\att^{w}$ does not depend on the specific choice).

For any $1\leq i\leq \dtt$, let $\lambda_i\in \Lie(A)^{\ast}$ be the value of the $i$th entry, i.e.\ $\lambda_{i}(\alpha)=\alpha_i$. For any $1\leq k\leq \dtt-1$, 
let $\psi_k=\lambda_k-\lambda_{k+1}$; these form a basis for the roots of $\Lie(A)$.
For $\sigma\in W$, let $h_k(\sigma)=h(P_{k},\att^{\sigma})$ and by abuse of notation also
\[  \psi_k(\sigma)\coloneqq\psi_{k}(\pi_{P_k}(\upalpha^{\sigma}))=\frac{1}{k}\sum_{i=1}^{k}\upalpha_{\sigma_i}-\frac{1}{\dtt-k}\sum_{i=k+1}^{\dtt}\upalpha_{\sigma_i}.
\]

\subsection{Entropy formulas}
Before diving into the bounds for the entropy in the cusp, we show in this section useful formulas for the entropy of diagonal actions on maximal parabolic subgroups.

First, note the following simple lemma, whose proof is left to the reader, which gives 
the sum of Lyapunov exponents 
for a given diagonal action. 
\begin{lemma}\label{lemma:8.4}
Let $a=\exp(\alpha)\in \SL_{N}(\mathbb{R})$ be a diagonal
matrix, so that the entries $\alpha_{i}$ on the diagonal of $\alpha$ satisfy $\alpha_{i}\geq \alpha_{i+1}$ for all $1\leq i\leq N-1$.
Then 
\[h(\SL_{N}(\mathbb{R}),a)=\sum_{i=1}^{N}(N+1-2i)\alpha_{i}.\]
\end{lemma}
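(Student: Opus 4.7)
The plan is to unwind the definitions directly. By the definition of $h(H_S,a)$ given in the introduction, with $H_S = \SL_N(\mathbb{R})$ corresponding to the index set $S = \{(i,j) : 1 \leq i,j \leq N,\ i \neq j\}$ of all off-diagonal positions, we have
\[
h(\SL_N(\mathbb{R}), a) = \sum_{\substack{1\leq i,j\leq N\\ i\neq j}} (\alpha_i - \alpha_j)^+.
\]
The hypothesis $\alpha_i \geq \alpha_{i+1}$ implies that $(\alpha_i - \alpha_j)^+ = \alpha_i - \alpha_j$ whenever $i < j$ and $(\alpha_i - \alpha_j)^+ = 0$ whenever $i > j$, so the sum collapses to $\sum_{i<j}(\alpha_i - \alpha_j)$.

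Next, I would simply count how many times each $\alpha_i$ appears. In $\sum_{i<j}(\alpha_i - \alpha_j)$, the term $\alpha_i$ appears with a positive sign once for each $j > i$ (that is, $N - i$ times) and with a negative sign once for each $j < i$ (that is, $i - 1$ times). Collecting coefficients yields a total coefficient of $(N-i) - (i-1) = N + 1 - 2i$ on $\alpha_i$, which is exactly the formula claimed.

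There is no real obstacle here; the lemma is purely a bookkeeping identity, and the only mild subtlety is matching the convention that $h(H_S,a)$ sums positive Lyapunov exponents over \emph{all} roots (not just positive ones), which is what one gets from $S = \{(i,j) : i \neq j\}$ for $\SL_N(\mathbb{R})$. Once the ordering assumption on $\alpha$ is used to drop the $(\cdot)^+$, the remainder is a one-line count, so the write-up will essentially consist of the two displayed equations above.
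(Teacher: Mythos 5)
Your proof is correct and is the natural (and essentially only) argument here; the paper in fact leaves this lemma's proof to the reader, and your computation — dropping the $(\cdot)^{+}$ using the ordering assumption and then counting that $\alpha_i$ occurs with coefficient $(N-i)-(i-1)=N+1-2i$ in $\sum_{i<j}(\alpha_i-\alpha_j)$ — is exactly what is intended. The remark about diagonal versus off-diagonal index pairs is immaterial since those terms vanish, so nothing further is needed.
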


For a maximal parabolic subgroup $P_{k}$, we consider its Levi subgroup $L_{P_k}$ which is the diagonal part of $P_{k}$, and its unipotent radical $U_{P_{k}}$ which is the off-diagonal part of $P_{k}$.
For convenience, we write here explicit formulas for the entropy on these subgroups.
\begin{lemma}\label{lemma:levi_computation}
    Let $a=\exp(\alpha)\in \SL_{\dtt}(\mathbb{R})$ be a diagonal matrix, and let $1\leq k\leq \dtt-1$. Then the following formulas hold:
    \begin{enumerate}
        \mathitem \[h(U_{P_k},a)=\sum_{i=1}^{k}\sum_{j=k+1}^{\dtt}(\alpha_{i}-\alpha_{j})^{+}\]
        \item If the diagonal entries of $a$ are monotonically decreasing in each block of $P_{k}$, then: \[h(L_{P_{k}},a)=\sum_{i=1}^{k}(k+1-2i)\alpha_{i}+\sum_{i=k+1}^{\dtt}(\dtt-k+1-2(i-k))\alpha_i\] 
    \end{enumerate}
\end{lemma}
\begin{proof}
    The formula for the unipotent radical is immediate by the definition of entropy (see~\S\ref{section:introduction}).

    For the Levi subgroup, let $c_1=\frac{1}{k}\sum_{i=1}^{k}\alpha_{i}$ and $c_2=\frac{1}{\dtt-k}\sum_{i=k+1}^{\dtt} \alpha_i$ be the average of the diagonal entries on the first and second blocks respectively, and let \[\overline{\alpha}_i=\begin{cases}
    \alpha_{i}-c_1 & 1\leq i\leq k \\
    \alpha_{i}-c_2 & k<i\leq \dtt
    \end{cases}.\]
    Let $\beta_1=\diag(\overline{\alpha}_1,\ldots,\overline{\alpha}_{k})$ and $\beta_2=\diag(\overline{\alpha}_{k+1},\ldots,\overline{\alpha}_{\dtt})$.
    Note that \[\sum_{i=1}^{k}\overline{\alpha}_{i}=\sum_{i=k+1}^{\dtt}\overline{\alpha}_{i}=0,\] so $\exp(\beta_{1})\in \SL_{k}(\mathbb{R})$ and $\exp(\beta_{2})\in \SL_{\dtt-k}(\mathbb{R})$.
    Then, since the Levi subgroup is composed of blocks on the diagonal, the entropy is computed as
    \begin{align*}h(L_{P_k},a)&=\sum_{i=1}^{k}\sum_{j=1}^{k}(\alpha_i-\alpha_j)^{+}+\sum_{i=k+1}^{\dtt}\sum_{j=k+1}^{\dtt}(\alpha_i-\alpha_j)^{+}\\&
    =h(\SL_{k}(\mathbb{R}),\exp(\beta_1))+h(\SL_{\dtt-k}(\mathbb{R}),\exp(\beta_2)).\end{align*}
    By Lemma~\ref{lemma:8.4},
    \[h(\SL_{k}(\mathbb{R}),\exp(\beta_1))=\sum_{i=1}^{k}(k+1-2i)(\alpha_i-c_1)=\sum_{i=1}^{k}(k+1-2i)\alpha_i,\]
    where we used the identity $\sum_{i=1}^{k}(k+1-2i)=0$.
    Similar expression holds for the entropy of $\exp(\beta_2)$, concluding the proof for the Levi subgroup.
    \end{proof}

    The following equivalent formula for the entropy in the unipotent radical will also be useful.
    \begin{lemma}\label{lemma:3.3N}
    Let $\sigma\in W$.
    For any $1\leq i\leq k$, let 
    \[P_{i}=\{j>k:\ \sigma_{j}>\sigma_{i}\},\]
    and for any $k<j\leq \dtt$ let
    \[N_{j}=\{i\leq k:\ \sigma_{i}<\sigma_{j}\}.\]
    Then
    \[h(U_{P_k},\att^{\sigma})=\sum_{i=1}^{k}|P_{i}|\upalpha_{\sigma_{i}}-\sum_{j=k+1}^{\dtt}|N_{j}|\upalpha_{\sigma_{j}}\]
    \end{lemma}
    \begin{proof}
        Let $1\leq i\leq k$ and $k<j\leq \dtt$.
        Note that the Lyapunov exponent $(\upalpha_{\sigma_{i}}-\upalpha_{\sigma_j})^{+}$ satisfies
        \[(\upalpha_{\sigma_{i}}-\upalpha_{\sigma_j})^{+}=\begin{cases}
        0& j\not\in P_{i} \\ 
        \upalpha_{\sigma_{i}}-\upalpha_{\sigma_{j}}& j\in P_{i}
        \end{cases}\]
        due to the monotonicity of $\att$.

        Furthermore, note that $j\in P_{i}\iff i\in N_{j}$, namely $\mathbbm{1}_{P_i}(j)=\mathbbm{1}_{N_{j}}(i)$, where $\mathbbm{1}_{C}(x)=\begin{cases} 1 & x\in C \\ 0 & x\not\in C\end{cases}$ is the characteristic function of a set $C$.
        Then
                \begin{align*}
            h(U_{P_k},\att^{\sigma})&=\sum_{i=1}^{k}\sum_{j=k+1}^{\dtt}(\upalpha_{\sigma_{i}}-\upalpha_{\sigma_j})^{+}=\sum_{i=1}^{k}\sum_{j=k+1}^{\dtt}\mathbbm{1}_{P_{i}}(j)(\upalpha_{\sigma_i}-\upalpha_{\sigma_j})\\&
            =\sum_{i=1}^{k}\sum_{j=k+1}^{\dtt}\mathbbm{1}_{P_{i}}(j)\upalpha_{\sigma_i}-\sum_{j=k+1}^{\dtt}\sum_{i=1}^{k}\mathbbm{1}_{N_{j}}(i)\upalpha_{\sigma_j}\\&
            =\sum_{i=1}^{k}|P_{i}|\upalpha_{\sigma_i}-\sum_{j=k+1}^{\dtt}|N_{j}|\upalpha_{\sigma_j}
        \end{align*}
        concluding the proof.
    \end{proof}

\subsection{Strategy of the proof}\label{subsec:2.2}
Let 
\[\iota:\ W\to \mathbb{R}^{2}\]
be the map which maps an element of the Weyl group $W$ to the corresponding point on the $(\psi_k,h_k)$-plane via
\[\iota(w)=\big(\psi_k(w),h_k(w)\big).\]
Our strategy is to study the convex hull 
of the image of $\iota$, namely
\[\mathcal{C}=\Hull(\iota(W)).\]
As a convex polygon, it is sufficient to study its boundary $\partial\mathcal{C}$.

Consider first the following two elements of $\mathcal{C}$. 
Let $\tau^{1}\in W$ be the identity permutation, and define $\tau^{\dtt}\in W$ by
\begin{equation}\label{eq:3.1n}
\tau^{\dtt}_i=\begin{cases}
    \dtt-k+i& i\leq k\\
    i-k&i>k
\end{cases}.
\end{equation}
We consider some properties of $\tau^{1}$ and $\tau^{\dtt}$. First, note that the first $k$ entries of $\att^{\sigma}$, where $\sigma\in W$, are the largest possible in the case where $\sigma=\tau^{1}$, and are the smallest when $\sigma=\tau^{\dtt}$. Then it is clear by the definition of $\psi_{k}$ that $\iota(\tau^{1})$ has the largest $\psi_k$ value amongst the points in $\mathcal{C}$, and similarly $\iota(\tau^{\dtt})$ has the smallest  $\psi_k$ value. 
It can further be shown that $\iota(\tau^{1})$ and $\iota(\tau^{\dtt})$ also have the largest and smallest $h_{k}$ value, respectively. We defer the proof of this statement to Lemma~\ref{lemma:iota(d)_is_minimal} (but note that at least for $\tau^{1}$ this claim is easy, since $h_{k}(\tau^{1})=h(G,\att)$).
Finally, note that the $\psi_k$ value is positive for $\iota(\tau^{1})$ and negative for $\iota(\tau^{\dtt})$.
Using these observations, Figure~\ref{fig:3.1} shows a schematic drawing of $\partial\mathcal{C}$.

\begin{figure}
\scalebox{0.7}{
  \begin{tikzpicture}[>=latex]
\centering
\begin{axis}[
  axis x line=center,
  axis y line=center,
  width={\linewidth},
  xtick={-3,-2,-1,0,1,2,3,4,5},
  ytick={0,4,8,12},
  xticklabels={},
  yticklabels={},
  xlabel={$\psi_k$},
  ylabel={$h_k$},
  label style={font=\Huge},
  xlabel style={right},
  ylabel style={above},
  xmin= -4,
  xmax=6,
  ymin=0,
  ymax=16]

  \addplot[black] [mark=*] coordinates {(-3,1) (-1,7) (0.5, 10) (2,11) (3,11.5) (5,12)};
  \addplot[black] [dashed] coordinates {(-3,1) (5,12)};  
  \addplot[black] [mark=*] coordinates {(-3,1) (-2,1.5) (1,4) (3,7) (5,12)};   
    \node[] at (axis cs: 5+0.55,12) {\LARGE $\iota(\tau^{1})$};    
    \node[] at (axis cs: -3-0.55,1) {\LARGE $\iota(\tau^{\dtt})$};  
\end{axis}
\end{tikzpicture}
}
\caption{A schematic drawing of $\partial\mathcal{C}$ on the $(\psi_k,h_k)$-plane.}\label{fig:3.1}
\end{figure}
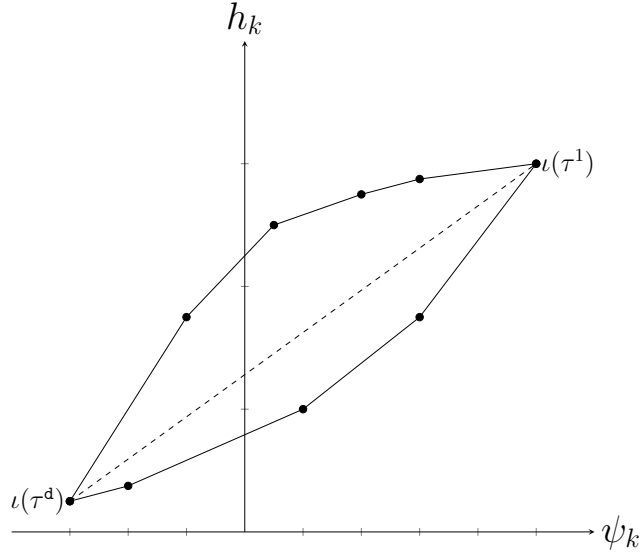

Note that for any two points on the boundary of a convex polygon, there are two components of the boundary which connect the two points. For the particular case of $\iota(\tau^{1})$ and $\iota(\tau^{\dtt})$, we claim that studying the upper component allows us to deduce bounds for entropy.

\begin{proposition}\label{proposition:2.1}
    Consider an edge in the upper component of $\partial\mathcal{C}$, which intersects the $\psi_k=0$ axis and has a slope $c$. Let $f$ be the $h_k$-value of this intersection point. 
    Let $\phi=c\cdot \psi_k$. Then
    \[\max_{w\in W}(h-\phi)([w]_{P_k})= f.\]
\end{proposition}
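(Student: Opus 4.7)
The plan is to reinterpret $(h-\phi)([w]_{P_k})$ geometrically on the $(\psi_k,h_k)$-plane and then invoke a routine convexity argument. Observe that with $\phi = c\cdot\psi_k$, the definitions give
\[(h-\phi)([w]_{P_k}) = h_k(w) - c\,\psi_k(w) = L(\iota(w)),\]
where $L\colon \mathbb{R}^2 \to \mathbb{R}$ is the affine-linear functional $L(\psi,h) = h - c\psi$. Geometrically, $L(\psi,h)$ is exactly the $h_k$-intercept of the line of slope $c$ passing through $(\psi,h)$, so maximising $(h-\phi)([w]_{P_k})$ over $w\in W$ amounts to finding the point of $\iota(W)$ through which the slope-$c$ line has the highest $h_k$-intercept.

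First I would pass from the finite set $\iota(W)$ to the convex polygon $\mathcal{C}=\Hull(\iota(W))$: since $L$ is affine-linear,
\[\max_{w\in W} L(\iota(w)) = \max_{p\in \iota(W)} L(p) = \max_{p\in\mathcal{C}} L(p).\]
Next I would use the standard fact that a linear functional attains its maximum on a convex polygon along a whole face whose supporting hyperplane has the correct slope. The level sets of $L$ are exactly the lines of slope $c$ in the $(\psi_k,h_k)$-plane; hence $L$ is \emph{constant} along any edge of $\mathcal{C}$ whose slope is $c$, and takes strictly smaller values on the side that lies below such a supporting line.

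Now specialise to the distinguished edge described in the proposition. Because it is part of the \emph{upper} component of $\partial\mathcal{C}$ and has slope $c$, the entire polygon $\mathcal{C}$ lies weakly below the line extending this edge, i.e.\ $L(p) \le L(q)$ for every $p\in\mathcal{C}$ and every $q$ on the edge. Since the edge meets the $\psi_k=0$ axis at the point $(0,f)$, evaluating gives $L(0,f) = f$, which is therefore the common value of $L$ on the edge and hence the maximum of $L$ on $\mathcal{C}$. Combining with the previous step yields
\[\max_{w\in W}(h-\phi)([w]_{P_k}) = \max_{p\in\mathcal{C}} L(p) = f,\]
as desired.

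There is no real obstacle: the proof is essentially a one-line geometric translation followed by the standard "linear functions on convex hulls attain their max on the boundary" observation. The only point that deserves a brief justification is why an \emph{upper-boundary} edge of slope $c$ exists and is unique up to collinearity — but this is built into the hypothesis of the proposition. Thus the entire argument reduces to unwinding definitions and citing Theorem~\ref{theorem:1.0} if one wants to then feed $f$ into an entropy bound.
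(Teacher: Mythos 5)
Your proof is correct and follows essentially the same route as the paper: both identify $(h-\phi)([w]_{P_k})$ with the $h_k$-intercept of the slope-$c$ line through $\iota(w)$ and then observe that the line extending the given upper-boundary edge supports $\mathcal{C}$ from above, so the maximal intercept is the one at $(0,f)$. The only cosmetic difference is that you phrase this via the linear functional $L(\psi,h)=h-c\psi$ and the standard max-over-convex-hull fact, which makes the supporting-line step slightly more explicit than the paper's wording.
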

\begin{proof}
    For any $w\in W$, consider the line of slope $c$ which passes through $\iota(w)$.
    It can be observed that $(h-\phi)([w]_{P_k})$ is the $h_k$-value of the intersection point of said line with the $\psi_k=0$ axis. 
    For the particular case where $\iota(\tau)$ is a point on an edge of slope $c$ in the upper component of $\partial\mathcal{C}$, this line through $\iota(\tau)$ contains the edge and bounds $\mathcal{C}$ from above by convexity.
    Therefore, it intersects the $\psi_{k}=0$ axis at a higher point compared to the lines corresponding to other points $\iota(w)$. Namely, $h-\phi$ obtains its maximal value on $[\tau]_{P_k}$. As $f$ was defined as the $h_k$-value of this intersection point for $\tau$, we get $\underset{w\in W}{\max}(h-\phi)([w]_{P_k})= f,$ 
    as required.
\end{proof}
\begin{remark}\phantomsection\label{remark:3.3}
\begin{enumerate}
\item
    The proof of Proposition~\ref{proposition:2.1} can give an entropy bound derived from any edge of the upper component, not necessarily intersecting the $\psi_k=0$ axis.
    However, it is not difficult to see that the minimal bound is obtained from an edge connecting a point with a non-negative $\psi_k$ value and a point with a non-positive $\psi_k$ value, as in Proposition~\ref{proposition:2.1}.
\item\label{item:3.3_2}
    It is possible for the upper component of $\partial{C}$ to have two edges which intersect the $\psi_k=0$ axis, rather than just one. This happens if one of the vertices of the upper component lies precisely on the $\psi_{k}=0$ axis (which of course may occur only for some particular instances of $\att$). In this case both edges which contain said vertex indeed intersect the $\psi_{k}=0$ axis.
    Let $c_1<c_2$ be the slopes of the two edges.
    Then it is not difficult to see in this case that the statement of Proposition~\ref{proposition:2.1} holds with $\phi=c\psi_k$ for any $c_1\leq c\leq c_2$, using the same proof.
\item
Let us make a comment on the dynamical interpretation of the intersection point with the $\psi_{k}=0$ axis. This interpretation 
is given only as motivation and is not essential for any part of this current paper, and will be made clearer in~\cite{mor2022c} where it will be used to prove tightness of our entropy bounds.
As described in~\cite{mor2022a}, the cusp can be partitioned to regions according to parabolic subgroups, and points (lattices) in these regions can be associated with an orientation which is an element of $W_{P,\att}$.
To study entropy in the cusp, as also shown in~\cite{mor2022a}, one needs to understand trajectories of lattices which begin and end at the compact part of $G/\Gamma$ but spend most of the time in the cusp. 
Crudely, these trajectories contribute entropy $h(P,\att^{w})$ per unit time when passing through the cusp associated with $P$ at orientation $[w]_{P}$.
Considering the $(\psi_k,h_k)$-plane, note that for an edge between $\iota(\sigma)$ and $\iota(\tau)$ with
\[\psi_k(\sigma)>0>\psi_k(\tau),\]
the $h_k$ value of the intersection point of the edge with the $\psi_{k}=0$ axis is a weighted average of the entropies of the two orientations. Then this $h_{k}$ value is also equal to the entropy stemming from a trajectory in the $P_{k}$ part of the cusp, which goes up the cusp with orientation $[\tau]_{P_{k}}$ and down the cusp with orientation $[\sigma]_{P_{k}}$.
Similarly, for an edge which intersects the axis at a point $\iota(\sigma)$ for some $\sigma\in W$ with $\psi_k(\sigma)=0$, the entropy $h_k(\sigma)$ (which is the $h_{k}$ value of the intersection point)
can be interpreted as the contribution of a trajectory which stays high up in the cusp without changing its height, with orientation $[\sigma]_{P_k}$. 
\end{enumerate}
\end{remark}

Proposition~\ref{proposition:2.1} gives us bounds for the entropy of the cusp, as we desire.
\begin{corollary}\label{corollary:2.4}
    In the conditions of Proposition~\ref{proposition:2.1}, we have
    \[h_{\infty,\subseteq P_{k}}(\att)\leq f.\]
\end{corollary}
\begin{proof}
    Clearly, Theorem~\ref{theorem:1.0} asserts that
    $h_{\infty,P_{k}}(\att)\leq f$, using $\phi=c\cdot \psi_{k}$ and Proposition~\ref{proposition:2.1}.
    In order to see that the same bound holds for $h_{\infty,\subseteq P_{k}}(\att)$, let
    \[\tilde{\phi}=c\cdot\Big(\frac{1}{k}\sum_{i=1}^{k}\lambda_i-\frac{1}{\dtt-k}\sum_{i=k+1}^{\dtt}\lambda_i\Big)\]
    where $\lambda_i$ is defined as in~\S\ref{subsec:2.1}.
    Then
    \[\tilde{\phi}(\pi_{Q}(\alpha))=\phi(\pi_{P_{k}}(\alpha))\]
    for any $Q\subseteq P_{k}$ and $\alpha\in \Lie(A)$.
    As the entropy is monotone with respect to increasing parabolic subgroups, we also have
    \[h(Q,\alpha)\leq h(P_{k},\alpha),\]
    hence together
    \[(h-\tilde{\phi})([w]_{Q})\leq (h-\phi)([w]_{P_{k}})\]
    for any $w\in W$ and $Q\subseteq P_{k}$. This concludes the proof, using Theorem~\ref{theorem:1.0}.\end{proof}

\subsection{Simple transpositions}\label{sec:transpositions}
Let $T_n=(n, n+1)$ be the adjacent transposition of $n$ and $n+1$. 
Recall that $W\cong S_{\dtt}$ acts on $A$ from the right, so for any $\sigma\in W$ and $1\leq n\leq \dtt-1$, $\upalpha^{T_{n}\sigma}$ is obtained from $\upalpha^{\sigma}$ by switching between the entries of $\upalpha^{\sigma}$ with values $\upalpha_{n}$ and $\upalpha_{n+1}$.
Studying the effect of adjacent transpositions on entropy would be relevant for studying $\partial\mathcal{C}$. In fact, we will construct the upper component of $\partial\mathcal{C}$ by starting with $\iota(\tau^{1})$ and applying these transpositions, in a specific order, on the underlying elements of $W$.

Let us consider the effect of applying a simple transposition on both coordinates of $\iota(\sigma)$:
\begin{proposition}\label{calc_transposition}
Let $[\sigma]_{P_k}\in W_{P_k,\att}$ and $n<\dtt$. 
Assume $\sigma_{j}=n$ and $\sigma_{j^{\prime}}=n+1$ for some $j\leq k<j^{\prime}$. Then
\begin{enumerate}
    \item\label{item:8.4_1} $ h_k(\sigma)-h_k(T_n\sigma)=(j^{\prime}-j)(\upalpha_n-\upalpha_{n+1})$
    \item \label{item:8.4_2}
    $\psi_k(\sigma)-\psi_k(T_n\sigma)=\frac{\dtt}{k(\dtt-k)}(\upalpha_n-\upalpha_{n+1})$
\end{enumerate}
\end{proposition}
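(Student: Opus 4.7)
Part~(\ref{item:8.4_2}) is a direct computation. Under the transposition $T_n$, only the values at positions $j$ and $j'$ are affected: the sum $\sum_{i\le k}\upalpha_{\sigma_i}$ decreases by $\upalpha_n - \upalpha_{n+1}$ while $\sum_{i>k}\upalpha_{\sigma_i}$ increases by the same amount. Substituting into the defining formula
\[\psi_k(\sigma) = \tfrac{1}{k}\sum_{i=1}^{k}\upalpha_{\sigma_i} - \tfrac{1}{\dtt-k}\sum_{i=k+1}^{\dtt}\upalpha_{\sigma_i}\]
and combining the two resulting fractions over a common denominator gives the claimed value $\tfrac{\dtt}{k(\dtt-k)}(\upalpha_n - \upalpha_{n+1})$.

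For part~(\ref{item:8.4_1}), the plan is to decompose
\[h(P_k, \att^\sigma) = \sum_{(i_1, i_2)} (\upalpha_{\sigma_{i_1}} - \upalpha_{\sigma_{i_2}})^+,\]
where the sum ranges over all ordered pairs $(i_1, i_2)$ belonging to one of the two diagonal blocks of $P_k$ or to the upper-right unipotent block $\{(i_1, i_2) : i_1 \le k < i_2\}$. Since $T_n$ only alters the entries at positions $j$ and $j'$, only the summands involving at least one of these two positions are affected, and I would organize the calculation by grouping the affected terms according to which of the three components of the sum they lie in.

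The crucial observation is that since $\upalpha_n$ and $\upalpha_{n+1}$ are adjacent in the decreasing order $\upalpha_1 \ge \ldots \ge \upalpha_{\dtt}$, every other eigenvalue $\upalpha_m$ (for $m \neq n, n+1$) either satisfies $\upalpha_m \ge \upalpha_n$ (``above'') or $\upalpha_m \le \upalpha_{n+1}$ (``below''). Consequently, each affected $(\cdot)^+$ summand either stays the same or changes by exactly $\pm(\upalpha_n - \upalpha_{n+1})$, with sign determined by the above/below status of the other coordinate. Let $a$, $b$ count the ``above'' and ``below'' values in the first block excluding position $j$, and $a'$, $b'$ the corresponding counts in the second block excluding $j'$, so that $a + b = k - 1$ and $a' + b' = \dtt - k - 1$. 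A direct tally then yields a first-block contribution of $(b - a)(\upalpha_n - \upalpha_{n+1})$, a second-block contribution of $(a' - b')(\upalpha_n - \upalpha_{n+1})$, and an upper-right contribution of $(a + b' + 1)(\upalpha_n - \upalpha_{n+1})$ (the ``$+1$'' coming from the pair $(j, j')$ itself), summing to a total change of $(b + a' + 1)(\upalpha_n - \upalpha_{n+1})$.

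To close, I invoke the sorting convention from~\S\ref{subsec:2.1}: because $\att^\sigma$ is monotonically decreasing in each block, we have $j = a + 1$ and $j' = k + a' + 1$, whence $j' - j = k + a' - a = b + a' + 1$, matching the total change computed above. The main obstacle I foresee is executing the bookkeeping in the previous paragraph cleanly, since one must track several subcases (pairs with endpoint equal to $j$ versus $j'$, within the same block versus across the block boundary). Multiplicities in $\upalpha$ pose no essential difficulty: the case $\upalpha_n = \upalpha_{n+1}$ is trivial because then $T_n \in \stab_W(\att)$ and both sides vanish, while other coincidences are handled by using weak rather than strict inequalities in the above/below dichotomy, without affecting the final counts.
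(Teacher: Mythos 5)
Your proof is correct. Part~(\ref{item:8.4_2}) is the same one-line computation as in the paper. For part~(\ref{item:8.4_1}) you use the same decomposition of $h(P_k,\cdot)$ into the two diagonal (Levi) blocks and the upper-right unipotent block, but you evaluate the change differently: the paper plugs the sorted blocks into the closed-form expression of Lemma~\ref{lemma:8.4} to get the Levi contribution $(2j'-2j-\dtt)(\upalpha_n-\upalpha_{n+1})$ in one stroke and then counts the unipotent terms, whereas you tally every affected $(\cdot)^{+}$ summand individually, using the observation that $\upalpha_n$ and $\upalpha_{n+1}$ are adjacent in the sorted order so each term shifts by $0$ or $\pm(\upalpha_n-\upalpha_{n+1})$. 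Both arguments close identically, by converting $j,j'$ into counts via the sorting convention: your $j=a+1$, $j'=k+a'+1$ (hence $j'-j=k+a'-a=b+a'+1$) is equivalent to the paper's relation $j'=n+k-j+1$ obtained by counting the indices $1,\dots,n-1$ across the two blocks. Your route is more elementary and self-contained (it never needs Lemma~\ref{lemma:8.4}), at the cost of more bookkeeping; the paper's route is shorter but leans on the closed formula. Your treatment of multiplicities is adequate: since the convention makes the indices $\sigma_i$ increase within each block, ``above'' and ``below'' coincide with $\sigma_i<n$ and $\sigma_i>n+1$ whenever $\upalpha_n>\upalpha_{n+1}$, so the counts $a,b,a',b'$ are unambiguous, and the degenerate case $\upalpha_n=\upalpha_{n+1}$ is indeed trivial.
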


\begin{proof}
    Recall our convention in~\S\ref{subsec:2.1} for the representative $\sigma$ of $[\sigma]_{P_k}$, chosen so that $\sigma_{1}<\cdots <\sigma_{k}$ and $\sigma_{k+1}<\cdots<\sigma_{\dtt}$. 
    Then, by counting the entries of $\sigma$ equal to $1,\ldots,n-1$, it follows that
    \[(j-1)+(j^{\prime}-k-1)=n-1,\]
    hence
    \begin{equation}\label{eq:2.1}j^{\prime}=n+k-j+1.\end{equation}
    To prove item~\ref{item:8.4_1},
    we compute the entropy difference for the Levi subgroup $L_{P_{k}}$ and for the unipotent radical $U_{P_{k}}$ separately.
    First, using Lemma~\ref{lemma:levi_computation},
    \begin{align*}
        h(L_{P_{k}},\att^{\sigma})-h(L_{P_{k}},\att^{T_n \sigma})&=\Big((k+1-2j)\upalpha_n+(\dtt-k+1-2(j^{\prime}-k))\upalpha_{n+1}\Big)\\&
        -\Big((k+1-2j)\upalpha_{n+1}+(\dtt-k+1-2(j^{\prime}-k))\upalpha_n\Big)\\&
        =(2j^{\prime}-2j-\dtt)(\upalpha_n-\upalpha_{n+1}).
    \end{align*}
    Next, we use Lemma~\ref{lemma:3.3N} to compute the entropy difference in the unipotent radical. It follows that the difference is only in the coefficients of  $\upalpha_{n}$ and $\upalpha_{n+1}$, specifically:
    \begin{align*}
        h(U_{P_{k}},\att^{\sigma})-h(U_{P_{k}},\att^{T_n \sigma})&=\Big((\dtt-j^{\prime}+1)\upalpha_n-j \upalpha_{n+1}\Big)-\Big((\dtt-j^{\prime})\upalpha_{n+1}-(j-1)\upalpha_{n}\Big)\\&
        =(\dtt+j-j^{\prime})(\upalpha_n-\upalpha_{n+1}).
    \end{align*} 
    Together,
    \begin{align*}
         h_k(\sigma)-h_k(T_n\sigma)&=\Big(h(L_{P_{k}},\att^{\sigma})-h(L_{P_{k}},\att^{T_{n}\sigma})\Big)+\Big(h(U_{P_{k}},\att^{\sigma})-h(U_{P_{k}},\att^{T_{n}\sigma})\Big)\\&
        =(j^{\prime}-j)(\upalpha_n-\upalpha_{n+1})
    \end{align*} 
    which concludes the proof of item~\ref{item:8.4_1}.
    
    For item~\ref{item:8.4_2}, note that
    \begin{equation*}
    \psi_k(\sigma)-\psi_k(T_n\sigma)=\big(\frac{1}{k}\upalpha_n-\frac{1}{\dtt-k}\upalpha_{n+1}\big)-\big(\frac{1}{k}\upalpha_{n+1}-\frac{1}{\dtt-k}\upalpha_n\big)=\frac{\dtt}{k(\dtt-k)}(\upalpha_n-\upalpha_{n+1})
    \end{equation*}
    since the difference between the $\psi_k$ values is again only in the coefficients of $\upalpha_{n}$ and $\upalpha_{n+1}$.
\end{proof}

\subsection{The upper component of $\partial\mathcal{C}$}
In order to give a complete description of the upper component of $\partial\mathcal{C}$, we consider finite sequences in $W$ where each element is obtained from the previous one by a simple transposition (of the form in Proposition~\ref{calc_transposition}). 
Such a sequence is projected to the $(\psi_k,h_k)$-plane by $\iota$, and the points are connected consecutively by straight lines. Such a curve is then called a path.

Paths will be useful to work with since the difference between the coordinates of consecutive points is easily computed by Proposition~\ref{calc_transposition}. In particular, both the $\psi_{k}$ and $h_{k}$ values only decrease from one point to the next, forming a monotone non-decreasing curve.

\begin{definition}\label{definition:path}
    A \emph{path} for $[\sigma]_{P_k}\in W_{P_k,\att}$ is a piecewise linear curve in the $(\psi_k, h_k)$-plane obtained by 
    connecting points $\iota(w^{1}),\ldots,\iota(w^{s}) \in \mathcal C$ consecutively by straight line segments, with the following additional requirements:
    \begin{enumerate}
        \item $w^{1}=\tau^{1}$ and $w^{s}=\sigma$.
        \item
        For any $1\leq i<s$, there are some $1\leq n\leq \dtt-1$ and $j\leq k<j'$, so that
        $n=w^{i}_{j}$, $n+1=w^{i}_{j'}$, and $w^{i+1}=T_{n}w^{i}$.
    \end{enumerate}
\end{definition}

\begin{lemma}\label{lemma:2.6}
    There exists a path for any $[\sigma]_{P_k}\in W_{P_k,\att}$. 
\end{lemma}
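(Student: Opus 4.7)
The plan is to reduce the claim to a simple combinatorial statement about $k$-subsets of $\{1,\ldots,\dtt\}$. Encoding a representative $w\in W$ of $[w]_{P_k}\in W_{P_k,\att}$ (following the convention of~\S\ref{subsec:2.1}) by the $k$-subset $S(w)\coloneqq\{w_1,\ldots,w_k\}\subseteq\{1,\ldots,\dtt\}$, the monotonicity convention recovers $w$ from $S(w)$ (up to $\stab_W(\att)$). An admissible step $w^i\to T_n w^i$ then corresponds to the move
\[S(w^i)\longmapsto (S(w^i)\setminus\{n\})\cup\{n+1\},\qquad n\in S(w^i),\ n+1\notin S(w^i),\]
which strictly increases $\Sigma(S)\coloneqq\sum_{i\in S}i$ by one. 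The base point $\tau^1$ has $S(\tau^1)=\{1,\ldots,k\}$, the unique $k$-subset of minimum sum $k(k+1)/2$.

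I would then argue by induction on $\Sigma(S(\sigma))-k(k+1)/2\geq 0$. The base case $\Sigma(S(\sigma))=k(k+1)/2$ forces $\sigma=\tau^1$, for which the one-point constant path suffices. For the inductive step, assume $S(\sigma)\neq\{1,\ldots,k\}$. Then $S(\sigma)$ fails to be downward closed in $\{1,\ldots,\dtt\}$, so there is some $m\in S(\sigma)$ with $m-1\notin S(\sigma)$. Let $\sigma^\prime$ be the representative with $S(\sigma^\prime)=(S(\sigma)\setminus\{m\})\cup\{m-1\}$, a $k$-subset with sum $\Sigma(S(\sigma))-1$. The inductive hypothesis supplies a path to $\sigma^\prime$, and appending the admissible step $\sigma^\prime\to T_{m-1}\sigma^\prime=\sigma$ extends it to a path to $\sigma$ (any coincident consecutive vertices may be dropped, which can only happen in the degenerate case $\upalpha_{m-1}=\upalpha_m$).

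The only remaining check is that the swap preserves the monotone convention, so that $T_{m-1}\sigma^\prime$ is itself a valid representative. This is immediate: in the first block of $\sigma^\prime$, replacing the entry $m-1$ by $m$ keeps the block sorted, because every first-block index strictly greater than $m-1$ is in fact at least $m+1$ (since $m\notin S(\sigma^\prime)$); the second block is handled by the symmetric argument. I do not foresee any genuine obstacle — the main ingredient is just the elementary fact that every $k$-subset of $\{1,\ldots,\dtt\}$ other than $\{1,\ldots,k\}$ fails to be downward closed, which is precisely what powers the inductive descent toward the base subset.
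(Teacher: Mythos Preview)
Your argument is correct. Both your proof and the paper's rest on the same combinatorial move on $k$-subsets, $S\mapsto (S\setminus\{n\})\cup\{n+1\}$, but the presentations differ. The paper writes down an explicit factorization
\[
\sigma=(T_{\sigma_1-1}\cdots T_1)(T_{\sigma_2-1}\cdots T_2)\cdots(T_{\sigma_k-1}\cdots T_k)\tau^1,
\]
which amounts to pushing the $k$th first-block entry from $k$ up to $\sigma_k$, then the $(k{-}1)$st from $k{-}1$ up to $\sigma_{k-1}$, and so on; it then declares that ``it is easy to see'' this is a path. Your proof instead runs a downward induction on $\Sigma(S(\sigma))$, producing some (not canonically chosen) path. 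The paper's version is more concrete and yields a specific path that is reused implicitly later (the analysis of the upper boundary of $\mathcal{C}$), while your version is a cleaner existence argument and has the small advantage that you actually verify the monotone-representative convention is preserved and address the degenerate case $\upalpha_{m-1}=\upalpha_m$; the paper leaves both of these to the reader.
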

\begin{proof}
The approach we take here is a useful way to think about the affect of applying simple transpositions, and will often be used throughout this paper. 
According to our convention, $\sigma$ is chosen so that $\sigma_1 < \sigma_2 < \dots < \sigma_k$ as well as $\sigma_{k+1} < \dots < \sigma_\dtt$. 
Then, in order to obtain $\sigma$ from $\tau^{1}$ we perform the following procedure. First we deal with the $k$th entry. We apply $T_{k}$ from the left to place $k+1$ in the $k$th place instead of $k$, apply $T_{k+1}$ next to place $k+2$ there and so on until we finally use $T_{\sigma_{k}-1}$ to place $\sigma_k$ in the $k$th entry. That is, we apply  $T_{\sigma_{k}-1} \cdots T_{k}$ to treat the $k$th entry, without changing the values of the first $k-1$ entries. Note that $\sigma_{k}\geq k$, so this product should be understood as the identity map if $\sigma_{k}=k$. Once this is done, there is a sufficient gap between the $(k-1)$th and $k$th entries (which are $k-1$ and $\sigma_{k}$, respectively) and we can similarly `push' the $(k-1)$th entry forward until we get it to be $\sigma_{k-1}$ using $T_{\sigma_{k-1}-1}\cdots T_{k-1}$ (this occurs again without affecting any other entry).
Eventually, it follows that we can write
    \[\sigma=(T_{\sigma_1-1}\cdots T_{1})\cdot(T_{\sigma_{2}-1}\cdots T_{2})\cdots (T_{\sigma_{k}-1}\cdots T_{k})\tau^{1}.\]
    Note that in this large product, each time a simple transposition $T_{n}$ multiplies from the left an element $w$, it happens so that $n=w_{j}$ and $n+1=w_{j'}$ for some $j\leq k<k'$, due to the gap created between the entries. Then it follows that this product defines a path for $[\sigma]_{P_k}$ by connecting the points $\iota(\tau^{1}),\iota(T_{k}\tau ^{1}),\cdots,\iota(T_{\sigma_{k}-1}\cdots T_{k}\tau^{1}),\ldots,\iota(\sigma)$.
\end{proof}

The following lemma was already mentioned in~\S\ref{subsec:2.2}, but now we have the tools to prove it efficiently.
\begin{lemma}\label{lemma:iota(d)_is_minimal}
    Let $\tau^{\dtt}\in W$ be as defined in Equation~\eqref{eq:3.1n}. Then $h_{k}(\tau^{\dtt})\leq h_{k}(\sigma)$ for all $\sigma\in W$.
\end{lemma}
\begin{proof}
    Let $\sigma\in W$. Without loss of generality, since $h_k(\sigma)$ depends only on $[\sigma]_{P_k}$ and not on $\sigma$, we may assume $\sigma$ satisfies the conventions of~\S\ref{subsec:2.1}.
    Then, similarly to the proof of Lemma~\ref{lemma:2.6}, it is clear that $\tau^{\dtt}$ can be obtained from $\sigma$ by applying simple transpositions.
    It follows from Proposition~\ref{calc_transposition} that each such transposition only reduces entropy, hence $h_{k}(\tau^{\dtt})\leq h_{k}(\sigma)$ as required. 
\end{proof}

The next lemma considers the possible transpositions which induce a given slope on the $(\psi_{k},h_{k})$-plane. The motivation for this question will be made clear right after the proof.
\begin{lemma}\label{lemma:3.10}
    Let $[\sigma]_{P_k}\in W_{P_k,\att}$ which satisfies $\sigma_j=n$, $\sigma_{j^{\prime}}=n+1$ for some $1\leq n\leq \dtt-1$ and $j\leq k<j^{\prime}$. Assume $\iota(\sigma)\not=\iota(T_{n}\sigma)$. 
    Then the slope of the line connecting $\iota(\sigma)$ and $\iota(T_n\sigma)$ has slope 
    \[\frac{k(\dtt-k)}{\dtt}s,\]
    for some $s$, if and only if  \[\sigma_{j}=s+2j-k-1.\]
    Furthermore, in this case we have
    $j\in J_{s}$, where
    \begin{equation}\label{eq:3.3N}J_{s}\coloneqq\{k+1-s,\ldots,\dtt-s\}\cap\{1,\ldots,k\}.\end{equation}
\end{lemma}
\begin{proof}
    Proposition~\ref{calc_transposition} states that the slope of the line connecting $\iota(\sigma)$ and $\iota(T_n\sigma)$ is equal to 
    \[\frac{k(\dtt-k)}{\dtt}(j^{\prime}-j).\]
    Note that Equation~\eqref{eq:2.1} states a relation between $j^{\prime},j$ and $n=\sigma_{j}$, namely
    \[j'=\sigma_{j}+k-j+1.\]
    Together we get that slope of the line is equal to
    \[\frac{k(\dtt-k)}{\dtt}(\sigma_{j}+k-2j+1).\]
    Hence indeed, the slope is equal to $\frac{k(\dtt-k)}{\dtt}s$ if and only if 
    \[\sigma_{j}=s+2j-k-1,\]
    as required.
    
    Next, note that due to our conventions in~\S\ref{subsec:2.1}, $\sigma$ satisfies \[\sigma_{1}<\cdots <\sigma_{j}<\cdots <\sigma_{k}.\] 
    Hence $\sigma_{j}\geq j$ (since there are $j-1$ distinct smaller positive integers) and also $\sigma_{j}\leq \dtt-(k-j+1)$ (since there are $k-j+1$ distinct larger integers bounded by $\dtt$; these are $\sigma_{j+1},\ldots,\sigma_{k}$ as well as $\sigma_{j'}$). 
    Hence
    \[s+2j-k-1=\sigma_{j}\in\{j,\ldots,\dtt-1-k+j\}.\]
    Therefore, it follows from this inclusion that
    \[j\in J_{s}=\{k+1-s,\ldots,\dtt-s\}\cap\{1,\ldots,k\}.\]
\end{proof}

\begin{definition}
    The \emph{amount of time a path spends at slope $c$} is defined as the sum of lengths of intervals on the $\psi_k$-axis where the slope of the line segments is $c$.
\end{definition}

    \begin{corollary}\label{cor:3.11}
        Let
        \[d_{s}=\frac{\dtt}{k(\dtt-k)}\sum_{n\in N_{s}}(\upalpha_{n}-\upalpha_{n+1}),\]       
        where
        $N_{s}=\{s+2j-k-1:\ j\in J_{s}\}.$
        Then the amount of time any path spends at slope $\frac{k(\dtt-k)}{\dtt}s$ is bounded by $d_{s}$, for all integer $1\leq s\leq \dtt-1$.
    \end{corollary}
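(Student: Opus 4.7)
The plan is to show that for each $n \in N_s$, the transposition $T_n$ contributes a segment of slope $\tfrac{k(\dtt-k)}{\dtt}s$ at most once along any path; summing the horizontal lengths of these segments will then give the bound $d_s$. By Remark~\ref{remark:2.7}, a segment has slope $\tfrac{k(\dtt-k)}{\dtt}s$ only if the transposition applied is $T_n$ for some $n \in N_s$, and by Proposition~\ref{calc_transposition}(\ref{item:8.4_2}) the horizontal length of such a segment equals $\tfrac{\dtt}{k(\dtt-k)}(\upalpha_n - \upalpha_{n+1})$, a quantity depending only on $n$. So once the single-use claim is established, the total time at slope $\tfrac{k(\dtt-k)}{\dtt}s$ is at most
\[\sum_{n\in N_s}\tfrac{\dtt}{k(\dtt-k)}(\upalpha_n-\upalpha_{n+1})=d_s.\]

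To prove the single-use claim, I would track the following monovariant: for each intermediate permutation $w^{i}$ in the path, let $V^{(i)}\subseteq\{1,\ldots,\dtt\}$ be the set of values appearing in the first $k$ entries of $w^{i}$, and set $f^{(i)}(n)=|V^{(i)}\cap[1,n]|$. Each step of the path replaces some value $m\in V^{(i)}$ by $m+1$, and a short case check on the relative sizes of $n$ and $m$ shows that $f^{(i)}(n)$ is unchanged whenever $m\neq n$ and decreases by exactly one when $m=n$. In particular $f^{(\cdot)}(n)$ is non-increasing in $i$ and drops by exactly one at every application of $T_n$.

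The conventions of~\S\ref{subsec:2.1} force the values in the first block to appear in increasing order, so whenever $T_n$ is applicable at $w^{i}$, the value $n$ occupies position $f^{(i)}(n)$ of the first block. Combining this with the derivation before Remark~\ref{remark:2.7}, the slope of the corresponding step is $\tfrac{k(\dtt-k)}{\dtt}(n+k+1-2f^{(i)}(n))$. Between two successive applications of $T_n$ in the path the quantity $f(n)$ has therefore dropped by exactly one, so the slopes at which $T_n$ appears form a strictly increasing arithmetic progression with common difference $2$. In particular each fixed slope $\tfrac{k(\dtt-k)}{\dtt}s$ is realised by $T_n$ at most once, and the claim follows.

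The main technical point is the monovariant computation for $f^{(i)}(n)$, and in particular verifying that its decreases are aligned exactly with the applications of $T_n$ --- no more and no less. Once this is in place, the remaining ingredients (the formula for the slope in terms of $f^{(i)}(n)$, and the summation over $n\in N_s$) are essentially bookkeeping.
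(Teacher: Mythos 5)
Your proposal is correct and follows the same route as the paper: the paper's proof simply cites Remark~\ref{remark:2.7} (which forces $n\in N_s$ for a step of slope $\frac{k(\dtt-k)}{\dtt}s$) together with Proposition~\ref{calc_transposition} (which gives the horizontal length $\frac{\dtt}{k(\dtt-k)}(\upalpha_n-\upalpha_{n+1})$ of such a step), leaving implicit the fact that each $T_n$ can realise a given slope at most once along a path. Your monovariant $f^{(i)}(n)=|V^{(i)}\cap[1,n]|$, which drops by exactly one precisely at applications of $T_n$ and hence makes the successive slopes of $T_n$-steps strictly increase by $2$, is a correct and welcome justification of that implicit single-use claim.
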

    \begin{proof}
        Follows from Lemma~\ref{lemma:3.10}
        and Proposition~\ref{calc_transposition}.
    \end{proof}

\begin{definition}\label{def:3N}
    Consider some path. We say that the path is:
\begin{enumerate}
    \item 
     \emph{maximized up to slope} $\frac{k(\dtt-k)}{\dtt}s$, if the amount of time it spends at slope $\frac{k(\dtt-k)}{\dtt}s^{\prime}$ is equal to $d_{s^{\prime}}$ for all integer $1\leq s^{\prime}< s$.
    \item
    \emph{maximized}, if it is maximized up to the largest slope of the path.
    \item
    \emph{optimal}, if any path contained in it is maximized.
\end{enumerate}
\end{definition}

\begin{remark}
    The difference between a maximized and an optimal path is in the order of slopes. Considering the slopes from right to left in the $(\psi_k,h_k)$-plane, the slopes must be increasing for an optimal path (namely such path is concave) while there is no such restriction for a maximized path.
\end{remark}

Next, in Proposition~\ref{prop:3.12} we show that an optimal path for $[\tau^{\dtt}]_{P_k}$ must actually be equal to the upper component of $\partial\mathcal{C}$. Then our strategy for understanding this upper component would be to build explicitly an optimal path, which we do in Corollary~\ref{cor:3.18N}.
Once we do this, it would give a characterization of the upper component as the unique optimal path for $[\tau^{\dtt}]_{P_k}$
    \begin{proposition}\label{prop:3.12}
        If an optimal path for $[\tau^{\dtt}]_{P_k}$ exists, then this path is the upper component of $\partial\mathcal{C}$.
    \end{proposition}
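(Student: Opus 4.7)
The plan is to verify the two properties that characterize the upper component of $\partial\mathcal{C}$ as a piecewise linear curve from $\iota(\tau^1)$ to $\iota(\tau^{\dtt})$: first, that the optimal path sits in $\mathcal{C}$ as a concave curve whose breakpoints are images of Weyl group elements; and second, that every $\iota(w)$ with $w \in W$ lies on or below it. Once both are in hand, Jensen's inequality on convex combinations of vertices of $\mathcal{C}$ forces the upper boundary to coincide with the optimal path.

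First, I would unpack what optimality forces about the ordering of slopes. Suppose the optimal path visits $\iota(w^1)=\iota(\tau^1),\iota(w^2),\ldots,\iota(w^r)=\iota(\tau^{\dtt})$, and by Remark~\ref{remark:2.7} each segment has slope $\tfrac{k(\dtt-k)}{\dtt}s_i$ for some positive integer $s_i$. Applying the maximization hypothesis inductively to the nested sub-paths $\iota(w^1),\ldots,\iota(w^i)$ forces the sequence $s_1, s_2, \ldots$ to be non-decreasing and requires each integer value $s$ that actually occurs among the $s_i$ to be used for the full budget $d_s$ before any strictly larger slope appears. Consequently, after grouping consecutive equal-slope segments, the path becomes a piecewise linear curve with strictly increasing slopes and with breakpoints in $\iota(W)\subseteq \mathcal{C}$; hence it is concave as a function of $\psi_k$ and lies entirely inside $\mathcal{C}$.

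Second, I would show that no $\iota(w)$ can rise above the optimal path. By Lemma~\ref{lemma:2.6} there exists a (possibly non-optimal) path from $[\tau^1]_{P_k}$ to $[w]_{P_k}$; let $t_s$ be the time this path spends at slope $\tfrac{k(\dtt-k)}{\dtt}s$. Summing over segments yields
\begin{equation*}
\sum_{s} t_s \;=\; \psi_k(\tau^1)-\psi_k(w), \qquad \sum_{s} \tfrac{k(\dtt-k)}{\dtt}\,s\, t_s \;=\; h_k(\tau^1)-h_k(w),
\end{equation*}
together with $0 \leq t_s \leq d_s$. The optimal path at $\psi_k = \psi_k(w)$ reaches height $h_k(\tau^1) - \sum_s \tfrac{k(\dtt-k)}{\dtt}\,s\,t_s^*$, where $t_s^*$ is obtained by saturating $d_1, d_2, \ldots$ in increasing order of $s$ until the required total $\psi_k(\tau^1)-\psi_k(w)$ is reached. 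Since $t_s^*$ minimises $\sum_s s\,t_s$ over feasible distributions with that prescribed sum, $h_k(w)$ is at most the height of the optimal path at $\psi_k(w)$. Combining this with the concavity of the optimal path and Jensen's inequality on convex combinations of $\iota(W)$ identifies the curve with the upper component of $\partial\mathcal{C}$.

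The main obstacle is the structural claim of the second paragraph, namely rigorously deducing the non-decreasing slope order and full saturation of each $d_s$ from the hypothesis that every sub-path is maximized. One must rule out the scenario where the path introduces a larger slope while some strictly smaller slope still has unused budget, and handle corner cases such as slopes $s$ with $d_s = 0$ or multiplicities among the $\upalpha_i$. These merely trim certain slope values out of the sequence but do not disturb the order-of-saturation logic, so once the bookkeeping is written down cleanly, the linear-programming comparison in the third paragraph is routine.
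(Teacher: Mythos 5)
Your proof is correct and follows essentially the same route as the paper: produce a path for $[w]_{P_k}$ via Lemma~\ref{lemma:2.6}, use the budget bounds $t_s\leq d_s$ to compare it with the optimal path over the interval $[\psi_k(w),\psi_k(\tau^1)]$, and conclude that $\iota(w)$ lies below the optimal path, which is contained in $\mathcal{C}$. Your explicit greedy/linear-programming phrasing is just a more detailed rendering of the paper's ``average slope of $\mathcal{L}$ is smaller than that of $\mathcal{L}'$'' step.
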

    \begin{proof}
        Let $w\in W$. 
        Let $\mathcal{L}$ be the restriction of the optimal path for $[\tau^{\dtt}]_{P_k}$, to the         interval $[\psi_{k}(w),\psi_{k}(\tau^{1})]$ on the $\psi_{k}$-axis.
        By Lemma~\ref{lemma:2.6}, there is a path $\mathcal{L}^{\prime}$ for $[w]_{P_k}$.
        Recall that, by definition, a path is formed by connecting consecutive points each obtained from the previous one by a simple transposition of the form described in Proposition~\ref{calc_transposition}. In particular, it follows from Proposition~\ref{calc_transposition} that each line segment in a path has a positive slope in the $(\psi_{k},h_{k})$-plane.
        Then both $\mathcal{L}$ and $\mathcal{L}^{\prime}$ are the graphs of monotone increasing piecewise linear functions on $[\psi_k(w),\psi_k(\tau^{1})]$, which coincide at the point $\psi_k(\tau^{1})$.
        Since $\mathcal{L}$ is the restriction of an optimal path, the average slope of $\mathcal{L}$ is smaller than that of $\mathcal{L}^{\prime}$ (because, going from the right to the left, $\mathcal{L}$ exhausts the smallest possible slopes first), and so the left endpoint of $\mathcal{L}^{\prime}$, namely $\iota(w)$, lies below $\mathcal{L}$.
        It follows that the optimal path for $[\tau^{\dtt}]_{P_k}$ bounds from above all of the points in $\iota(W)$. 
        As the optimal path is on the other hand contained in $\mathcal{C}$, it is clear that it is precisely the upper component of $\partial\mathcal{C}$.
    \end{proof}

     Our goal now is to build an optimal path for $[\tau^{\dtt}]_{P_k}$. 
    For simplicity, we will assume that
    $k\leq \dtt/2$. This is certainly not an essential assumption, and the proof works for the other case as well, but it simplifies notations. Once a complete picture of this case is established, the case $k>\dtt/2$ will follow easily (see~\S\ref{subsec:8.10}).
    Note that for $k\leq\dtt/2$, we have
    \begin{equation}\label{eq:3.3nn}J_{s}=\begin{cases}
    \{k-s+1,\ldots,k\} & 1\leq s< k \\ 
    \{1,\ldots,k\} & k\leq s\leq \dtt-k \\
    \{1,\ldots,\dtt-s\} &  \dtt-k<s\leq \dtt-1
    \end{cases}\end{equation}
    where $J_s$ is as in Equation~\eqref{eq:3.3N}.

    We will now construct elements $\tau^{1},\ldots,\tau^{\dtt}$ in $W$, so that the curve which connects their images in the $(\psi_k,h_k)$-plane consecutively by straight lines would be an optimal path for $[\tau^{\dtt}]_{P_k}$. 
    These elements will be defined inductively, 
    where $\tau^{s+1}$ is obtained from $\tau^{s}$ by applying all possible simple transpositions $T_{\tau^{s}_{j}}$ for $j\leq k$ which are of the form discussed in Proposition~\ref{calc_transposition}.
    The goal would then be to show that the point $\iota(\tau^{s+1})$ is obtained from $\iota(\tau^{s})$ by traveling to the left on the $(\psi_{k},h_{k})$ at slope $s$ for the maximal possible amount of time $d_{s}$ (see Corollary~\ref{cor:3.11}), hence the path would be optimal.
    
    In order to define the points $\tau^{s}$ we first consider the following lemma.
    \begin{lemma}\label{lemma:3.14n}
        For $\sigma\in W$ and $1\leq k\leq \dtt-1$, define
        \[I_{\sigma}=\{1\;\leq j\leq k:\ \exists j'>k,\ \sigma_{j'}=\sigma_{j}+1\}\]
        and
        $M_{\sigma}=\{\sigma_{j}:\ j\in I_{\sigma}\}$. Then all elements of $\{T_{n}:\ n\in M_{\sigma}\}$ are commuting.
    \end{lemma}
    \begin{proof}
        Let $n\in M_{\sigma}$. First, since $n=\sigma_{j}$ for some $j\leq k$, it follows from the definition of $I_{\sigma}$ and $M_{\sigma}$ (and from $\sigma$ being a bijection) that $n-1\not\in M_{\sigma}$ (because it would require $n=(n-1)+1=\sigma_{j'}$ for $j'>k$, in contradiction).
        Similarly, since $n+1=\sigma_{j'}$ for some $j'>k$, it follows that $n+1\not\in M_{\sigma}$.
        Then the assertion follows from the fact that the only simple transpositions not commuting with $T_{n}$ are $T_{n-1}$ (if $n>1$) and $T_{n+1}$ (if $n<\dtt-1$).
    \end{proof}
    \begin{remark}\label{remark:3.15n}
        Let $[\sigma]_{P_{k}}\in W_{P_k,\att}$, so that the choice of the representative $\sigma$ follows the convention $\sigma_{1}<\cdots<\sigma_{k}$ as usual.
        Then the set $I_{\sigma}$ can be interpreted as follows. An integer $1\leq j<k$ satisfies $j\in I_{\sigma}$ if and only if $\sigma_{j+1}>\sigma_{j}+1$ (namely there is a non-trivial gap between $\sigma_{j}$ and $\sigma_{j+1}$), and $k\in I_{\sigma}$ if and only if $\sigma_{k}<\dtt$.
    \end{remark}

    We can now define the elements $\tau^{1},\ldots,\tau^{\dtt}$ as follows.
    \begin{definition}\label{definition:4N}
        Let $\tau^{1}$ be the identity, as before.
        Assuming $\tau^{s}$ is defined for some $1\leq s\leq \dtt-1$, let
        \begin{equation}\label{eq:3.4}
        \tau^{s+1}\coloneqq(\prod_{n\in M_{\tau^{s}}} T_{n})\tau^{s}.\end{equation}
    \end{definition}
    \begin{remark}
        Definition~\ref{definition:4N} is an abuse of notation for the case $s=\dtt-1$, since $\tau^{\dtt}$ was already defined in Equation~\eqref{eq:3.1n}. 
        We will show in Proposition~\ref{prop:3.16N} that this new definition of $\tau^{\dtt}$ coincides with our original definition.
    \end{remark}

The main properties we need to show the construction satisfies are given in the following proposition. During the proof, we will also write down explicit formulas (Equations~\eqref{eq:3.5N},\eqref{eq:3.6N},\eqref{eq:3.65N}) for the elements $\tau^{1},\ldots,\tau^{\dtt}$. 
    \begin{proposition}\label{prop:3.16N}
        Assume $k\leq \dtt/2$.
        Then:
        \begin{enumerate}
        \item 
        For any $1\leq s\leq \dtt-1$ we have $I_{\tau^{s}}=J_{s}$, where $I_{\tau^{s}}$ is as in Lemma~\ref{lemma:3.14n} and $J_{s}$ is as in Equation~\eqref{eq:3.3nn}.
        \item
        For any $1\leq s\leq \dtt -1$ and any $j\in J_{s}$ we have $\tau^{s}_{j}=s+2j-k-1$.
        \item
        The definition for $\tau^{\dtt}$ in Definition~\ref{definition:4N} coincides with Equation~\eqref{eq:3.1n}.
        \end{enumerate}
    \end{proposition}
    \begin{proof}
        We prove the proposition by constructing $\tau^{1},\ldots,\tau^{\dtt}$ explicitly. 
        Writing these elements down will also be useful later when we compute the bounds for the entropy in the cusp.
        As will be clear momentarily, it is easier to describe $I_{\tau^s}$ for three different regions of $s$  separately; (a) $1\leq s< k$, (b) $k\leq s\leq \dtt-k$, and (c)  $\dtt-k<s\leq \dtt-1$.
        Furthermore, we will write down $\tau^{s}_j$ only for $j\leq k$. Since $\tau^{s}$ is obtained from $\tau^{1}$ by simple transpositions, clearly $(\tau^{s}_{j})_{j=k+1}^{\dtt}$ is monotonically increasing, so determining the values for $j>k$ given the values for $j\leq k$ is not difficult. 
        The view point described in Remark~\ref{remark:3.15n} will be useful to keep in mind throughout this proof.
        
       Considering $\tau^{1}$ first, note that only the $k$th entry can be `pushed forward' by a simple transposition,  since $\tau^{1}_{j}=j$ for all $j\leq k$. Namely, we have $I_{\tau^1}=\{k\}$.
        Considering $\tau^{2}=T_{k}\tau^{1}$ next, since the $k$th entry was increased by $1$, a non-trivial gap of size $2$ was formed between the $(k-1)$th and $k$th entries of $\tau^{2}$, hence clearly $I_{\tau^2}=\{k-1,k\}$ (since $\tau^{2}_{j}=j$ for $j\leq k-1$ and $\tau^{2}_{k}=k+1$).
        This procedure continues the same, where for any $1\leq s\leq k-1$ the element $\tau^{s+1}$ is obtained from $\tau^{s}$ by applying simple transpositions on the last $s$ entries, namely \[I_{\tau^{s}}=\{k-s+1,\ldots,k\}\qquad\text{for $1\leq s\leq k-1$}.\]
        Note that we did indeed get $I_{\tau^{s}}=J_{s}$ for all such $s$
        (c.f.\  Equation~\eqref{eq:3.3nn}).
    
        Said differently, it follows that the $j$th entry is increased precisely \[\begin{cases} 0 & j\leq k-s \\ s-(k-j) & k-s<j\leq k \end{cases}\] times to form $\tau^{s+1}$ from $\tau^{1}$.
        Therefore, given that $\tau^{1}_{j}=j$ for all $1\leq j \leq k$, it follows that 
        \begin{equation}\label{eq:3.5N}\tau^{s}_j=\begin{cases} j & j\leq k-s+1 \\
    2j-k+s-1
    & k-s+1<j\leq k
    \end{cases}\qquad \text{for $1\leq s\leq k$}.\end{equation}
    Note that $\tau^{s}_{j}=2j-k+s-1$ for $j\in J_{s}$, as required (it is immediate for $j>k-s+1$, while for $j=k-s+1$ it follows since $2j-k+s-1=j$).

    \medskip

    For the next range of values for $s$, let us first write down $\tau^{k}$ explicitly. We have:
    \begin{equation}\label{eq:3.55N}\tau^{k}_{j}=2j-1\qquad\forall 1\leq j\leq k.\end{equation}
    Clearly, $I_{\tau^{k}}$ satisfies $I_{\tau^{k}}=\{1,\ldots,k\}$ due to the gap of size $2$ between $\tau^{k}_{j}$ and $\tau^{k}_{j+1}$ for all $1\leq j\leq k-1$.
    Proceeding inductively, the assertion $I_{\tau^{s}}=\{1,\ldots,k\}$ continues to hold until the $k$th entry finally reaches the value of $\dtt$ (at which point it cannot be further increased). Since $\tau^{k}_{k}=2k-1$, and the value increases by $1$ each time, we have \[I_{\tau^{s}}=\{1,\ldots,k\} \qquad\text{for  $k\leq s\leq \dtt-k$},\] which is indeed equal to $J_{s}$ for all such $s$.
    Then $\tau^{s+1}_{j}$ is obtained from $\tau^{k}$ by increasing all entries $s+1-k$ times, namely
    \begin{equation}\label{eq:3.6N}\tau^{s}_{j}=2j-1+s-k\qquad \forall 1\leq j\leq k\qquad \text{for $k+1\leq s\leq \dtt-k+1$},\end{equation}
    and indeed $\tau^{s}_{j}$ satisfies the required identity.
    
    \medskip

    Finally, for the third range of values of $s$, let us write $\tau^{\dtt-k+1}$ explicitly:
    \begin{equation}\label{eq:3.625N}\tau^{\dtt-k+1}_{j}=\dtt-2(k-j)\qquad \forall 1\leq j\leq k.\end{equation}
    At this point the $k$th entry reached its maximal value $\dtt$ so it cannot be pushed forward anymore, but all other entries can since the gap of size $2$ still remains between all entries. Then we have $I_{\tau^{\dtt-k+1}}=\{1,\ldots,k-1\}$. 
    It then follows that the $(k-1)$th entry $\tau^{\dtt-k+2}_{k-1}=\dtt-1$ also meets its maximal possible value, and in this case $I_{\tau^{\dtt-k+2}}=\{1,\ldots,k-2\}$. 
    In general, for any $\dtt-k<s\leq \dtt-1$ we would have \[I_{\tau^{s}}=\{1,\ldots,\dtt-s\},\]
    again satisfying $I_{\tau^{s}}=J_{s}$ as required.

    To conclude, let us write down $\tau^{s}$ for this region of $s$.
    We have that $\tau^{s+1}$ is obtained from $\tau^{s}$ by applying simple transpositions on the first $\dtt-s$ entries. That is, the $j$th entry is pushed forward precisely $\begin{cases}
        s-\dtt+k& j\leq \dtt-s\\
        k-j & \dtt-s< j\leq k
    \end{cases}$ times to form $\tau^{s+1}$ from $\tau^{\dtt-k+1}$, and we can write
    \begin{equation}\label{eq:3.65N}\tau^{s}_j=\begin{cases}
    2j-k+s-1 & j\leq\dtt-s+1\\
    \dtt-k+j &  \dtt-s+1 <  j\leq k
    \end{cases}\qquad \text{for $\dtt-k+2\leq s\leq \dtt$}.\end{equation}
    Indeed, $\tau^{s}_{j}=2j-k+s-1$ for any $j\in J_{s}$, for this last region of $s$, as required.
    
    Finally, note that both equations~\eqref{eq:3.65N} and~\eqref{eq:3.1n} give the same formula for $\tau^{\dtt}_{j}$ for $j\leq k$. This concludes the proof.
    \end{proof}

Let us now show that the properties in Proposition~\ref{prop:3.16N} do imply that the constructed path is optimal.
    \begin{corollary}\label{cor:3.18N}
        Assume $k\leq \dtt/2$. Then the curve connecting $\iota(\tau^{1}),\ldots,\iota(\tau^{\dtt})$ on the $(\psi_{k},h_{k})$-plane is an optimal path for $[\tau^{\dtt}]_{P_k}$.
    \end{corollary}
    \begin{proof}
    The curve is certainly a path for $[\tau^{\dtt}]_{P_k}$. We claim it is optimal.
        By Proposition~\ref{prop:3.16N} and  Lemma~\ref{lemma:3.10}, each simple transposition applied in Equation~\eqref{eq:3.4} induces a segment of slope $\frac{k(\dtt-k)}{\dtt}s$ on the $(\psi_k,h_{k})$-plane.
        Furthermore, by Proposition~\ref{calc_transposition}
        the difference between the $\psi_{k}$ coordinates of $\iota(\tau^{s})$ and $\iota(\tau^{s+1})$ is precisely \[\frac{\dtt}{k(\dtt-k)}\sum_{n\in M_{\tau^s}}(\upalpha_{n}-\upalpha_{n+1})=d_{s},\]
        where $d_{s}$ is as in Corollary~\ref{cor:3.11} and we used the fact that \[M_{\tau^{s}}=\{\tau^{s}_{j}:\ j\in I_{\tau^{s}}\}=\{s+2j-k-1:\ j\in J_{s}\}=N_{s}.\]

        Then indeed $\iota(\tau^{s+1})$ is obtained from $\iota(\tau^{s})$ by traveling to the left at slope $\frac{k(\dtt-k)}{\dtt}s$ for time $d_{s}$. By definition this means that the path is optimal (c.f.\ Definition~\ref{def:3N}).
    \end{proof}
    
    In conclusion, in Corollary~\ref{cor:3.18N} we gave full characterization of an optimal path for $[\tau^{\dtt}]_{P_k}$. By Proposition~\ref{prop:3.12} this path must be precisely the upper component of $\partial\mathcal{C}$. Hence we showed the following.
    \begin{corollary}\label{cor:3.14}
        The upper component of $\partial\mathcal{C}$ is the optimal path obtained by connecting the points $\iota(\tau^{1}),\ldots,\iota(\tau^{\dtt})$ consecutively by straight line segments, and the slope between two distinct points $\iota(\tau^{s})$ and $\iota(\tau^{s+1})$ is $\frac{k(\dtt-k)}{\dtt}s$.
    \end{corollary}

\subsection{Proof of Theorem~\ref{theorem:1.1} for the $k\leq \dtt/2$ case}\label{subsec:8.8}
In this section we prove the upper bound for $h_{\infty,\subseteq P_{k}}(\att)$ for the $k\leq \dtt/2$ case.
Previously we considered all elements $\tau^{1},\ldots,\tau^{\dtt}$ in order to give a full characterization of the upper component of $\partial\mathcal{C}$.
In view of Corollary~\ref{corollary:2.4}, we only need to determine which edge of this component intersects the $\psi_{k}=0$ axis, and compute the entropy bound derived by it.
First, we show that for this goal, it is sufficient to only consider $\tau^{k},\ldots,\tau^{\dtt-k+1}$.
\begin{lemma}\label{lemma:3.14}
    Let $(x_i)_{i=1}^{n}$ be a monotone non-increasing sequence with $\sum_{i=1}^{n}x_i=0$. Then for any $0\leq l\leq n$ we have $\sum_{i=1}^{l}x_i\geq 0\geq \sum_{i=l+1}^{n}x_i$.
\end{lemma}
\begin{proof}
    Assume, for the sake of contradiction, that \begin{equation}\label{eq:3.7N}\sum_{i=1}^{l}x_i<0.\end{equation} Then in particular $l\geq 1$ and by monotonicity
    \[lx_{l}\leq \sum_{i=1}^{l}x_{i}<0.\]
    Since $x_{l}<0$, it follows again by monotonicity that \begin{equation}\label{eq:3.8N}\sum_{i=l+1}^{n}x_{i}\leq0.\end{equation}
    Combining Equations~\eqref{eq:3.7N}-\eqref{eq:3.8N} we obtain
    \[\sum_{i=1}^{n}x_{i}<0,\]
    in contradiction. Therefore, $\sum_{i=1}^{l}x_{i}\geq 0$ as required.

    Finally, we also have \[\sum_{i=l+1}^{n}x_{i}=-\sum_{i=1}^{l}x_i\leq 0\]
    as desired. \end{proof}

    \begin{lemma}\label{lemma:3.15}
        For $k\leq \dtt/2$, we have
        \[\psi_{k}(\tau^{k})\geq0\geq\psi_{k}(\tau^{\dtt-k+1}).\]
    \end{lemma}
    \begin{proof}
        Using the explicit form for $\tau^{k}$ as in Equation~\eqref{eq:3.55N}, as well as the fact that $\sum_{i=1}^{\dtt}\upalpha_{i}=0$, we have
    \begin{equation}\psi_{k}(\tau^{k})=\frac{1}{k}\sum_{i=1}^{k}\upalpha_{2i-1}-\frac{1}{\dtt-k}(-\sum_{i=1}^{k}\upalpha_{2i-1})=\frac{\dtt}{k(\dtt-k)}\sum_{i=1}^{k}\upalpha_{2i-1}.\end{equation}
    Then, using monotonicity $\upalpha_{i}\geq \upalpha_{i+1}$ for all $1\leq i\leq \dtt-1$, we have
    \[2\sum_{i=1}^{k}\upalpha_{2i-1}\geq \sum_{i=1}^{k}(\upalpha_{2i-1}+\upalpha_{2i})=\sum_{i=1}^{2k}\upalpha_{i}\geq 0,\]
    where for the last inequality we used Lemma~\ref{lemma:3.14}.
    This shows that $\psi_{k}(\tau^{k})\geq 0$, as desired.

    Similarly, considering $\tau^{\dtt-k+1}$ (using Equation~\eqref{eq:3.625N}), we have
    \[\psi_{k}(\tau^{\dtt-k+1})=\frac{\dtt}{k(\dtt-k)}\sum_{i=1}^{k}\upalpha_{\dtt-2(k-i)}\]
    and
    \[2\sum_{i=1}^{k}\upalpha_{\dtt-2(k-i)}\leq \sum_{i=1}^{k}(\upalpha_{\dtt-2(k-i)}+\upalpha_{\dtt-2(k-i)-1})=\sum_{i=\dtt-2k+1}^{\dtt}\upalpha_{i}\leq 0,\]  
    concluding the proof. \end{proof}
    
We can finally prove Theorem~\ref{theorem:1.1} for the $k\leq \dtt/2$ case.
\begin{proof}
    
    Let $s$ be the minimal integer with $k\leq s\leq \dtt-k$ so that 
    \[\psi_{k}(\tau^{s})\geq 0\geq \psi_{k}(\tau^{s+1}).\]
    Such $s$ certainly exists since, due to Lemma~\ref{lemma:3.15}. 
    Therefore, it follows that the edge connecting $\iota(\tau^{s})$ and $\iota(\tau^{s+1})$ (alternatively a single point, if the two points are equal) intersects the $\psi_{k}=0$ axis. 
    Then it follows from Proposition~\ref{proposition:2.1} and Corollary~\ref{corollary:2.4}, combined with Corollary~\ref{cor:3.14} and Remark~\ref{remark:3.3} (item~\ref{item:3.3_2}), that  
\begin{equation}\label{eq:3.6}h_{\infty,\subseteq P_k}(\att)\leq (h-\phi_k)([\tau^{s}]_{P_k})\end{equation}
for 
$\phi_k=C_{k}\psi_{k}$, where $C_{k}=\frac{k(\dtt-k)}{\dtt}s$.
Let us compute the right hand side of Equation~\eqref{eq:3.6}, using the explicit form for $\tau^{s}$ as in Equations~\eqref{eq:3.55N} and~\eqref{eq:3.6N}. 
For simplicity of notation, let $m_k=s-k+1$. Then $\tau^{s}_{j}=m_{k}+2(j-1)$ for all $1\leq j\leq k$.
We first compute the entropy on the Levi subgroup, using Lemma~\ref{lemma:levi_computation}:    \begin{align*}
        h(L_{P_k},\att^{\tau^{s}})&=\sum_{i=1}^{k}(k+1-2i)\upalpha_{m_k+2(i-1)}
        +\sum_{i=1}^{m_k-1}(\dtt-k+1-2i)\upalpha_{i}\\&
        +\sum_{i=1}^{k}\Big(\dtt-k+1-2(m_k-1+i)\Big)\upalpha_{m_k+2(i-1)+1}\\&
        +\sum_{i=1}^{\dtt-m_k-2k+1}\Big(\dtt-k+1-2(m_k-1+k+i)\Big)\upalpha_{m_k+2k-1+i}.
    \end{align*}
    Let us now compute the entropy of $\att^{\tau^{s}}$ on the unipotent radical using the sets $P_{i}$ and $N_{j}$ as defined in Lemma~\ref{lemma:3.3N}.
    It is not difficult to write down the sets $P_{i}$ and $N_{j}$ explicitly and compute their sizes. For example, for any $1\leq i\leq k$ we have
    \[\{\tau^{s}_{j}:\ j\in P_{i}\}=\{m_{k}+2(l-1)+1:\ i\leq l\leq k\}\cup\{l:\ m_{k}+2k\leq l\leq \dtt\}\]
    so $|P_{i}|=(\dtt-m_k-2k+1)+(k-i+1)$.
    Then, after also computing $|N_{i}|$ in a similar way, we see that
    \begin{align*}
        h(U_{P_k},\att^{\tau^{s}})&=\sum_{i=1}^{k}\big((\dtt-m_k-2k+1)+(k-i+1)\big)\upalpha_{m_k+2(i-1)}\\&
        -\sum_{i=1}^{k}i\cdot \upalpha_{m_k+2(i-1)+1}
        -\sum_{i=1}^{\dtt-m_k-2k+1}k\cdot \upalpha_{m_k+2k-1+i}.
    \end{align*} 
Summing the entropy on the Levi subgroup and the unipotent radical, we obtain
    \begin{align*}
        h(P_k,\att^{\tau^{s}})&=h(L_{P_k},\att^{\tau^{s}})+h(U_{P_k},\att^{\tau^{s}})\\&
        =\sum_{i=1}^{m_k-1}(\dtt-k+1-2i)\upalpha_i
        +\sum_{i=1}^{k}\Big(\dtt-m_k-3(i-1)\Big)\upalpha_{m_k+2(i-1)}\\&
        +\sum_{i=1}^{k}\Big(\dtt-2m_k-k-3(i-1) \Big)\upalpha_{m_k+2(i-1)+1}\\&
        +\sum_{i=1}^{\dtt-m_k-2k+1}\Big(\dtt-4k-2m_k+1-2(i-1)\Big)\upalpha_{m_k+2k-1+i}\\&
        =\sum_{i=1}^{\dtt}(\dtt+1-2i)\upalpha_i
        -k\sum_{i=1}^{m_k-1}\upalpha_i\\&
        +\sum_{i=1}^{k}(m_k+i-2)\upalpha_{m_k+2(i-1)}
        +\sum_{i=1}^{k}(i-k)\upalpha_{m_k+2(i-1)+1}.
    \end{align*}
To compute $\phi_k([\tau^{s}]_{P_k})$, note that $\sum_{i=1}^{\dtt}\upalpha_{i}=0$ and so
\[\psi_k(\tau^{s})=\frac{1}{k}\sum_{i=1}^{k}\upalpha_{m_k+2(i-1)}-\frac{1}{\dtt-k}(-\sum_{i=1}^{k}\upalpha_{m_k+2(i-1)})=\frac{\dtt}{k(\dtt-k)}\sum_{i=1}^{k}\upalpha_{m_k+2(i-1)}.\]
    
Putting it all together,
    \begin{align*}
        &(h-\phi_k)([\tau^{s}]_{P_k})=h(P_k,\att^{\tau^{s}})-\frac{k(\dtt-k)}{\dtt}(m_k+k-1) \psi_k(\tau^{s})\\&
        =\sum_{i=1}^{\dtt}(\dtt+1-2i)\upalpha_i
        -k\sum_{i=1}^{m_k-1}\upalpha_i
        +\sum_{i=1}^{k}(i-k-1)\upalpha_{m_k+2(i-1)}
        +\sum_{i=1}^{k}(i-k)\cdot \upalpha_{m_k+2(i-1)+1}\\&
        =h(G,\att)-k\sum_{i=1}^{m_k}\upalpha_{i}-\sum_{i=1}^{k-1}(k-i)(\upalpha_{m_k+2i-1}+\upalpha_{m_k+2i}).
    \end{align*}
This gives the desired entropy bound. 
To conclude the proof, we only need to show that the constant $m_{k}$ defined in the proof agrees with the definition in Theorem~\ref{theorem:1.1}.
Recall that $s$ was defined as the minimal integer $k\leq s\leq \dtt-k$ so that $\psi_{k}(\tau^{s})\geq 0\geq \psi_{k}(\tau^{s+1})$.
Using the definition $m_{k}=s-k+1$, this characterizes $m_{k}$ as the minimal integer $1\leq m\leq \dtt-2k+1$ so that
\begin{equation}\label{eq:3.90n}\psi_{k}(\tau^{m+k-1})\geq 0\geq \psi_{k}(\tau^{m+k}).\end{equation}
Since $k\leq m+k-1<m+k\leq \dtt-k+1$, we can use Equations~\eqref{eq:3.55N} and~\eqref{eq:3.6N} to deduce 
\[\psi_{k}(\tau^{m+k-1})=\frac{\dtt}{k(\dtt-k)}\sum_{i=1}^{k}\upalpha_{m+2(i-1)}\]
and
\[\psi_{k}(\tau^{m+k})=\frac{\dtt}{k(\dtt-k)}\sum_{i=1}^{k}\upalpha_{m+2(i-1)+1}\]
which shows that the definition of $m_k$ in Equation~\eqref{eq:3.90n} coincides with the definition in Theorem~\ref{theorem:1.1}.
This concludes the proof.
\end{proof}

\begin{remark}\label{remark:2.13}
    It follows from the proof above, using Remark~\ref{remark:3.3} (item~\ref{item:3.3_2}), that Theorem~\ref{theorem:1.1} also holds if $m_{k}$ is replaced by any $m$, not necessarily minimal, satisfying 
    \[\sum_{i=1}^{k}\upalpha_{m+2(i-1)}\geq 0\geq\sum_{i=1}^{k}\upalpha_{m+2(i-1)+1}.\]
    In this case, the linear functional $\phi=\frac{k(\dtt-k)}{\dtt}(m+k-1)\psi_{k}$ could be used to deduce the same upper bound for entropy.
\end{remark}

\subsection{Proof of Theorem~\ref{theorem:1.1} for the $k>\dtt/2$ case}\label{subsec:8.10}
We can now use the $k\leq \dtt/2$ case to complete the proof of Theorem~\ref{theorem:1.1}.
\begin{proof}
Let $P=P_{k}$ for $k>\dtt/2$. Let
$Q=P_{\dtt-k}$ be the parabolic subgroup defined by the same block sizes as $P$, but with reverse order.  
For simplicity let $\tilde{k}=\dtt-k$.
For $w\in W$, let $\tilde{w}\in W$ be defined by
\[\tilde{w}_j=\begin{cases}w_{j+k} & \textup{for }1\leq j\leq \dtt-k\\w_{j-(\dtt-k)} & \textup{for }\dtt-k<j\leq \dtt\end{cases},\]
i.e.\ the permutation obtained by changing the order between the first $k$ entries and the last $\dtt-k$ entries. 
Note that 
\[\psi_{k}(w)=-\psi_{\tilde{k}}(\tilde{w}).\]
Furthermore,
$h(L_{P},\att^{w})=h(L_{Q},\att^{\tilde{w}})$ and $h(U_{P},\att^{w})=h(U_{Q}^{T},\att^{\tilde{w}})$,
where the $T$ superscript stands for transpose, since the same Lyapunov exponents are summed in both sides of the equalities.
Therefore,
\begin{align*}h(P,\att^{w})&=h(L_{P},\att^{w})+h(U_{P},\att^{w})=h(L_{Q},\att^{\tilde{w}})+h(U_{Q}^{T},\att^{\tilde{w}})\\&
=h(Q,\att^{\tilde{w}})+h(U_{Q}^{T},\att^{\tilde{w}})-h(U_{Q},\att^{\tilde{w}}).
\end{align*}
Also note that
\begin{align*}
    h(U_{Q}^{T},\att^{\tilde{w}})-h(U_{Q},\att^{\tilde{w}})&=\sum_{i=1}^{\tilde{k}}\sum_{j=\tilde{k}+1}^{\dtt}\Big((\upalpha_{\tilde{w}_j}-\upalpha_{\tilde{w}_i})^{+}-(\upalpha_{\tilde{w}_i}-\upalpha_{\tilde{w}_j})^{+}\Big)\\&
    =-\sum_{i=1}^{\tilde{k}}\sum_{j=\tilde{k}+1}^{\dtt}(\upalpha_{\tilde{w}_i}-\upalpha_{\tilde{w}_j})=-\Big((\dtt-\tilde{k})\sum_{i=1}^{\tilde{k}}\upalpha_{\tilde{w}_i}-\tilde{k}\sum_{j=\tilde{k}+1}^{\dtt}\upalpha_{\tilde{w}_j}\Big)\\&
    =-k(\dtt-k)\psi_{\tilde{k}}(\tilde{w}).
\end{align*}

Let $\phi_{\tilde{k}}=C_{\tilde{k}}\psi_{\tilde{k}}$ be the linear functional used in Equation~\eqref{eq:3.6} (note that $\tilde{k}<\dtt/2$), and define $\phi_{k}=C_{k}\psi_{k}$ where
\[C_{k}=k(\dtt-k)-C_{\tilde{k}}.\]
Then putting all together, we obtain
\begin{align*}
    (h-\phi_k)([w]_P)&=h(Q,\att^{\tilde{w}})-k(\dtt-k)\psi_{\tilde{k}}(\tilde{w})-C_{k}\psi_{k}(w)\\&
    =h(Q,\att^{\tilde{w}})-\Big(k(\dtt-k)-C_k\Big)\psi_{\tilde{k}}(\tilde{w})
    =(h-\phi_{\tilde{k}})([\tilde{w}]_{P_{\tilde{k}}}).
\end{align*}
Taking the maximum over all $w\in W$, as well as applying the argument in the proof of Corollary~\ref{corollary:2.4} to get a bound for $h_{\infty,\subseteq P_k}(\att)$, we get
\begin{equation}\label{eq:3.8}h_{\infty,\subseteq P_{k}}(\att)\leq \max_{w\in W}(h-\phi_{k})([w]_{P_{k}})=\max_{\tau\in W}(h-\phi_{\tilde{k}})([\tau]_{P_{\tilde{k}}}).\end{equation}
As the term on the right hand side of Equation~\eqref{eq:3.8} is precisely the bound obtained in~\S\ref{subsec:8.8} for $h_{\infty,\subseteq P_{\tilde{k}}}(\att)$, this concludes the proof.
\end{proof}

\section{Bounding the entropy of the entire cusp}\label{section:4}
In this section we prove Theorem~\ref{theorem:1.3}, namely find an upper bound for the entropy of the cusp $h_{\infty}(\att)$, without restrictions on the parabolic subgroups involved.
We prove that the quantity $h(G,\att)-\sum_{i=1}^{\dtt}\upalpha_{i}^{+}$ which was shown in~\S\ref{section:h_for_pmax} to bound $h_{\infty,\subseteq P_k}(\att)$ for $k=1,\dtt-1$, also bounds the a-priori larger quantity $h_{\infty}(\att)$. As we mentioned, all of these bounds will be shown in~\cite{mor2022c} to be tight.
Note that if $\dtt=2$ there is only one proper standard parabolic subgroup of $G$, so Theorem~\ref{theorem:1.3} in fact follows immediately from Theorem~\ref{theorem:1.0} using either Theorem~\ref{theorem:1.1} or Theorem~\ref{theorem:1.4}. Therefore, we assume $\dtt>2$ throughout this section. 

In order to bound $h_{\infty}(\att)$ using Theorem~\ref{theorem:1.0}, we have to consider all parabolic subgroups (maximal or otherwise) simultaneously by the same linear functional, in contrast to our study
in~\S\ref{section:h_for_pmax} where we considered each of the maximal parabolic subgroups separately using a specialized linear functional. 
We discussed two approaches for a maximal parabolic subgroup $P_{k}$.
First, we used a linear functional $\phi_k$ proportional to $\psi_{k}$ to get a bound for $h_{\infty,P_k}(\att)$ 
(following the idea of
Proposition~\ref{proposition:2.1}, this bound was derived explicitly from a complete characterization of the upper component of $\partial\mathcal{C}$).
It is clear that the sum of these functionals over all $k$ gives a linear functional which restores the same optimal upper bound for $h_{\infty,P_k}(\att)$ for all $k$ simultaneously, but its behaviour for non-maximal parabolic subgroups is a-priorily unclear.
Alternatively, we used a functional proportional to $\frac{1}{k}\sum_{i=1}^{k}\lambda_i-\frac{1}{\dtt-k}\sum_{i=k+1}^{\dtt}\lambda_i$ to get a bound for the entropy $h_{\infty,\subseteq P_k}(\att)$ which also takes into account parabolic subgroups contained in $P_k$ (see the proof of Corollary~\ref{corollary:2.4}). 
In this case, however, summing the linear functionals would alter the upper bounds for entropy on the maximal parabolic subgroups and with it give a worse bound for $h_{\infty}(\att)$.

We take here a different approach, and define in~\S\ref{subsec:4.1} a new functional $\phi_{\all}$ which would be easier to study from a technical view point.
It would be fairly straightforward to show in~\S\ref{subsec:4.2} that $h-\phi_{\all}$ is larger on the maximal parabolic subgroups compared to non-maximal subgroups.  
It would also be clear that $\phi_{\all}$ identifies with $\phi_{1}$ and $\phi_{\dtt-1}$ on $P_{1}$ and $P_{\dtt-1}$, respectively, and so gives the desired bound for the entropy on these groups. However, it would not be the case for $k\not=1,\dtt-1$. 
Hence the main task, in~\S\ref{subsec:4.3}, would be to show that the largest entropy bound is still obtained on $P_{1}$ and $P_{\dtt-1}$ amongst the maximal parabolic subgroups.

\subsection{Defining $\phi_{\all}$}\label{subsec:4.1}
Recall that, as discussed in~\S\ref{subsec:2.1}, we assume the element $\upalpha=\diag(\upalpha_1,\ldots,\upalpha_{\dtt})\in \Lie(A)$ defining $\att=\exp(\upalpha)$ satisfies $\upalpha_{i}\geq \upalpha_{i+1}$ for all $1\leq i\leq \dtt-1$.
Further recall the definition of the functional $\lambda_i$ (for $1\leq i\leq \dtt$) as in~\S\ref{subsec:2.1}, namely $\lambda_{i}(\alpha)=\alpha_i$ for any $\alpha=\diag(\alpha_1,\ldots,\alpha_{\dtt})\in \Lie(A)$, and the functional $\psi_{i}=\lambda_{i}-\lambda_{i+1}$ (for $1\leq i\leq \dtt-1$).

Fix an integer $m$ so that $\upalpha_{m}\geq 0\geq \upalpha_{m+1}$, and let
\[\begin{cases}z_1=\frac{\dtt-1}{\dtt-2}(m-1) \\ z_{\dtt}=-\frac{\dtt-1}{\dtt-2}(\dtt-m-1)\end{cases}.\]
Then, we define
\[\phi_{\all}^{\att,m}=z_1\lambda_1+z_{\dtt}\lambda_{\dtt}.\]
We will often use the implicit notation $\phi_{\all}$ instead of $\phi_{\all}^{\att,m}$.

\begin{lemma}\label{lemma:new.1}
    For any $1\leq k\leq \dtt-1$, we have
    \[\phi_{\all}|_{\Lie(A_{P_k})}=\frac{(\dtt-1)\Big((\dtt-2k)m+\dtt(k-1)\Big)}{\dtt(\dtt-2)}\psi_{k}|_{\Lie(A_{P_k})}\]
\end{lemma}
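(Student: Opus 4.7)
The plan is a direct linear-algebra computation, since $\phi_{\all}$ and $\psi_k$ are both linear functionals on the one-dimensional space $\Lie(A_{P_k})$ (after we impose the trace-zero condition), so it suffices to evaluate their ratio on a single nonzero vector.

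First I would parameterize $\Lie(A_{P_k})$: every element has the form $\alpha=\diag(c,\ldots,c,d,\ldots,d)$ with $k$ entries equal to $c$ and $\dtt-k$ entries equal to $d$, subject to the constraint $kc+(\dtt-k)d=0$. On such an $\alpha$, one reads off $\lambda_1(\alpha)=c$ and $\lambda_\dtt(\alpha)=d$, so $\phi_{\all}(\alpha)=z_1c+z_\dtt d$. Similarly, the definition of $\psi_k$ via $\pi_{P_k}$ in \S\ref{subsec:2.1} gives $\psi_k(\alpha)=c-d$ on this subspace (the block averages are just $c$ and $d$).

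Next I would use the trace-zero relation $d=-\tfrac{k}{\dtt-k}c$ to rewrite everything in terms of $c$. This yields $\psi_k(\alpha)=\tfrac{\dtt}{\dtt-k}c$ and $\phi_{\all}(\alpha)=\bigl(z_1-\tfrac{k}{\dtt-k}z_\dtt\bigr)c$, so the ratio equals
\[
\frac{\phi_{\all}}{\psi_k}\bigg|_{\Lie(A_{P_k})}=\frac{(\dtt-k)z_1-kz_\dtt}{\dtt}.
\]
Substituting the definitions $z_1=\tfrac{\dtt-1}{\dtt-2}(m-1)$ and $z_\dtt=-\tfrac{\dtt-1}{\dtt-2}(\dtt-m-1)$ and expanding the bracket $(\dtt-k)(m-1)+k(\dtt-m-1)$, this collapses to $(\dtt-2k)m+\dtt(k-1)$, which produces the advertised coefficient $\tfrac{(\dtt-1)\bigl((\dtt-2k)m+\dtt(k-1)\bigr)}{\dtt(\dtt-2)}$.

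There is no real obstacle here; the only thing to keep track of is the interpretation of $\psi_k$ on $\Lie(A_{P_k})$ (namely that the projection $\pi_{P_k}$ is the identity on this subspace, and so $\psi_k$ simplifies to the block-value difference $c-d$). Once this is noted, the identity is an elementary algebraic simplification of the two expressions for the ratio.
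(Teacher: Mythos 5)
Your proposal is correct and is essentially the same computation as the paper's proof: both reduce to evaluating $\phi_{\all}$ and $\psi_k$ on the one-dimensional space $\Lie(A_{P_k})$, where $\lambda_1$ and $\lambda_\dtt$ become proportional to $\psi_k$ via the trace-zero constraint, and then simplify $(\dtt-k)(m-1)+k(\dtt-m-1)$ to $(\dtt-2k)m+\dtt(k-1)$. Your explicit parameterization by the block values $c,d$ is just a notational variant of the paper's identity $\psi_k=\frac{\dtt}{\dtt-k}\lambda_1=-\frac{\dtt}{k}\lambda_{\dtt}$ on this subspace.
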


\begin{proof}
    Note that for any $\alpha\in\Lie(A_{P_k})$ and any $1\leq k\leq \dtt-1$, we have
    \[\psi_k(\alpha)=\frac{\dtt}{\dtt-k}\lambda_1(\alpha)=-\frac{\dtt}{k}\lambda_{\dtt}(\alpha).\]
    Therefore, for any such $\alpha$, 
    \begin{align*}\phi_{\all}(\alpha)&=(\frac{\dtt-k}{\dtt}z_1-\frac{k}{\dtt}z_{\dtt})\psi_{k}(\alpha)=\frac{\dtt-1}{\dtt(\dtt-2)}\Big((\dtt-k)(m-1)+k(\dtt-m-1)\Big)\psi_{k}(\alpha)\\&
    =\frac{(\dtt-1)\Big((\dtt-2k)m+\dtt(k-1)\Big)}{\dtt(\dtt-2)}\psi_{k}(\alpha),
    \end{align*}
    as required.
\end{proof}

We show that this new functional $\phi_{\all}$ restores the same upper bound for the entropy of the
$P_1$ and $P_{\dtt-1}$ maximal parabolic subgroups as we had before.
\begin{corollary}\label{corollary:new.2}
    Let $P=P_{k}$ for $k=1$ or $k=\dtt-1$. Then
    \[\max_{[w]_{P}\in W_{P,\att}}(h-\phi_{\all})([w]_P)\leq h(G,\att)-\sum_{i=1}^{\dtt}\upalpha_{i}^{+}.\]
\end{corollary}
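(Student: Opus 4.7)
My plan is to observe that, once restricted to the Cartan subalgebra $\Lie(A_{P_k})$ for $k \in \{1, \dtt-1\}$, the functional $\phi_{\all}$ coincides with the functional $\phi_k = C_k\psi_k$ already used in Section~\ref{section:h_for_pmax} to bound $h_{\infty,\subseteq P_k}(\att)$. Once this identification is made, the claim follows directly from the computations in~\S\ref{subsec:8.8} and~\S\ref{subsec:8.10}, combined with Theorem~\ref{theorem:1.0}.

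First, I would apply Lemma~\ref{lemma:new.1} with $k=1$ and $k=\dtt-1$. A direct substitution gives
\[
\phi_{\all}|_{\Lie(A_{P_1})} = \frac{(\dtt-1)m}{\dtt}\,\psi_1, \qquad \phi_{\all}|_{\Lie(A_{P_{\dtt-1}})} = \frac{(\dtt-1)(\dtt-m)}{\dtt}\,\psi_{\dtt-1},
\]
after cancelling the common factor $\dtt-2$ in both cases.

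Next, I would compare these coefficients with the ones used in Section~\ref{section:h_for_pmax}. For $k=1$, the functional employed in~\S\ref{subsec:8.8} is $\phi_1 = \frac{\dtt-1}{\dtt}s\,\psi_1$ with $s=m_1+1-1=m_1$; by Remark~\ref{remark:2.13}, any integer $m$ satisfying $\upalpha_m \geq 0 \geq \upalpha_{m+1}$ is admissible in place of $m_1$, so taking $s=m$ gives exactly $\frac{(\dtt-1)m}{\dtt}$. For $k=\dtt-1$, the relation $C_{\dtt-1} = (\dtt-1) - C_1$ established in~\S\ref{subsec:8.10} yields $\frac{(\dtt-1)(\dtt-m)}{\dtt}$. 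In both cases the coefficients agree with those computed above, so for $k \in \{1,\dtt-1\}$ and every $w \in W$,
\[
\phi_{\all}\bigl(\pi_{P_k}(\upalpha^w)\bigr) = \phi_k\bigl(\pi_{P_k}(\upalpha^w)\bigr),
\]
and therefore $(h-\phi_{\all})([w]_{P_k}) = (h-\phi_k)([w]_{P_k})$.

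Finally, I would invoke the computations of~\S\ref{subsec:8.8} (directly for $k=1$, and via the reduction in~\S\ref{subsec:8.10} for $k=\dtt-1$), which show that the maximum of $(h-\phi_k)([w]_{P_k})$ over $[w]_{P_k} \in W_{P_k,\att}$ equals the right-hand side of~\eqref{eq:1.1} with $k=1$. With the choice $m_1=m$, this right-hand side simplifies to $h(G,\att) - \sum_{i=1}^{m}\upalpha_i$; since $\upalpha_1,\ldots,\upalpha_m \geq 0$ and $\upalpha_{m+1},\ldots,\upalpha_{\dtt} \leq 0$, this equals $h(G,\att) - \sum_{i=1}^{\dtt}\upalpha_i^{+}$, giving the claim. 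The only real subtlety is the arithmetic verifying the matching of constants together with the flexibility in the choice of $m$ granted by Remark~\ref{remark:2.13}; there is no substantive obstacle beyond this bookkeeping.
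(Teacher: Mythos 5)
Your proposal is correct and follows essentially the same route as the paper: identify $\phi_{\all}|_{\Lie(A_{P_k})}$ with the functional $C_k\psi_k$ of Section~\ref{section:h_for_pmax} via Lemma~\ref{lemma:new.1} and the flexibility of Remark~\ref{remark:2.13}, then import the bound from~\S\ref{subsec:8.8} (and~\S\ref{subsec:8.10} for $k=\dtt-1$). The only difference is that you spell out the matching of constants and the final simplification $h(G,\att)-\sum_{i=1}^{m}\upalpha_i=h(G,\att)-\sum_{i=1}^{\dtt}\upalpha_i^{+}$, which the paper leaves implicit.
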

\begin{proof}
    First, by substituting $k=1$ in the result of Lemma~\ref{lemma:new.1} we have
    \[\phi_{\all}|_{\Lie(A_{P_1})}=\frac{\dtt-1}{\dtt}m\cdot\psi_{1}|_{\Lie(A_{P_1})}.\]
    Note that the linear functional on the RHS is (the restriction of) the one used in the proof of Theorem~\ref{theorem:1.1} to bound $h_{\infty,P_1}(\att)$ (see Equation~\eqref{eq:3.6} and the line below it, as well as Remark~\ref{remark:2.13}).
    Hence the same upper bound  $h(G,\att)-\sum_{i=1}^{\dtt}\upalpha_{i}^{+}$ is also obtained by using $\phi_{\all}$, as they identify on $\Lie(A_{P_1})$.
    
    The same argument applies similarly to the $k=\dtt-1$ case.
\end{proof}

We will later require the following properties of the proportionality constant characterizing $\phi_{\all}|_{\Lie(A_{P_k})}$ in Lemma~\ref{lemma:new.1}.
\begin{lemma}\label{lemma:new.6}
    Let $2\leq k\leq \dtt/2$ and set 
    \[c=\frac{(\dtt-1)\Big((\dtt-2k)m+\dtt(k-1)\Big)}{k(\dtt-k)(\dtt-2)}.\]
    Then the following hold:
    \begin{enumerate}[label=(\roman*)]
        \item\label{item-1} $c\geq \frac{\dtt-1}{\dtt-k}$
        \item $c\leq \frac{\dtt-1}{k}$
        \item $k+c<\dtt$
        \item $kc\geq m$.
    \end{enumerate}
\end{lemma}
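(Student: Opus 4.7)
The plan is to observe that $c$, viewed as a function of $m$, is an affine function with non-negative slope, so its extreme values are attained at the endpoints of the range of $m$. Then (i) and (ii) follow immediately from endpoint evaluation; (iii) follows by plugging (ii) into a quadratic in $k$ and using $k\ge2$, $k\le\dtt/2$; and (iv) follows from a direct algebraic identity.

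More precisely, I would first note that since $k\le\dtt/2$, the coefficient $\dtt-2k$ is $\ge0$, so the numerator of $c$ is non-decreasing in $m$. Since $\upalpha_m\ge0\ge\upalpha_{m+1}$ and $\sum\upalpha_i=0$ forces $1\le m\le\dtt-1$, it suffices to compute $c$ at $m=1$ and at $m=\dtt-1$. A short calculation gives $(\dtt-2k)\cdot 1+\dtt(k-1)=k(\dtt-2)$, so $c(1)=\frac{\dtt-1}{\dtt-k}$, and $(\dtt-2k)(\dtt-1)+\dtt(k-1)=(\dtt-2)(\dtt-k)$, so $c(\dtt-1)=\frac{\dtt-1}{k}$. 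Together these yield (i) and (ii).

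For (iii), I use (ii) to bound $k+c\le k+\frac{\dtt-1}{k}$, so it suffices to show $k^2-\dtt k+\dtt-1<0$, which factors as $(k-1)(k-(\dtt-1))<0$. This is exactly the conjunction $k>1$ and $k<\dtt-1$; the first holds by hypothesis, and the second follows from $k\le\dtt/2$ together with $\dtt\ge2k\ge4$ (so $\dtt/2\le\dtt-2<\dtt-1$). For (iv), I rearrange $kc\ge m$ to
\[
(\dtt-1)\bigl((\dtt-2k)m+\dtt(k-1)\bigr)\ge m(\dtt-k)(\dtt-2).
\]
Expanding and collecting the $m$-terms, the key identity is $(\dtt-1)(\dtt-2k)-(\dtt-k)(\dtt-2)=\dtt(1-k)$, which reduces the inequality to $\dtt(k-1)(\dtt-1-m)\ge0$. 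All three factors are non-negative ($k\ge2$, $\dtt>0$, $m\le\dtt-1$), so (iv) follows.

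There is no real obstacle here; the statement is a bookkeeping lemma whose only subtlety is keeping the cases $m=1$ and $m=\dtt-1$ straight and recognizing the factorization $(\dtt-1)(\dtt-2k)-(\dtt-k)(\dtt-2)=-\dtt(k-1)$ that makes (iv) collapse. I would present (i)--(ii) as a single monotonicity-plus-endpoint observation, then (iii) as a quadratic inequality in $k$, and (iv) via that factorization, so that each item takes only a few lines.
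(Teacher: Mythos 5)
Your proposal is correct and follows essentially the same route as the paper: items (i) and (ii) come from substituting the extreme values $m=1$ and $m=\dtt-1$ (the paper does this directly rather than phrasing it as monotonicity in $m$, but the computation is identical), item (iii) reduces to the same factorization $(k-1)(k-(\dtt-1))<0$, and item (iv) uses the very same identity $(\dtt-1)(\dtt-2k)-(\dtt-k)(\dtt-2)=-\dtt(k-1)$ to collapse $kc-m$ to a multiple of $\dtt(k-1)(\dtt-m-1)$. No gaps; the write-up is fine as is.
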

\begin{proof}
    Note that as $\dtt-2k\geq 0$ and $m\geq 1$, we can deduce
    \[c\geq \frac{(\dtt-1)\Big((\dtt-2k)+\dtt (k-1)\Big)}{k(\dtt-k)(\dtt-2)}=\frac{\dtt-1}{\dtt-k},\]
    establishing \ref{item-1}.

    Next, similarly, as $m\leq \dtt-1$, we can deduce
    \[c\leq \frac{(\dtt-1)\Big((\dtt-2k)\cdot(\dtt-1)+\dtt( k-1)\Big)}{k(\dtt-k)(\dtt-2)}=\frac{(\dtt-1)(\dtt^{2}-2\dtt-k\dtt+2k)}{k(\dtt-k)(\dtt-2)}=\frac{\dtt-1}{k}.\]
    Then, as $k\in (1,\dtt-1)$, we also have the corollary
    \[c\leq \frac{\dtt-1}{k}< \dtt-k.\]

    Lastly, 
    \begin{align*}(\dtt-k)(\dtt-2)(kc-m)&=(\dtt-1)\Big((\dtt-2k)m+\dtt(k-1)\Big)-m(\dtt-k)(\dtt-2)\\&
    =\Big((\dtt-1)(\dtt-2k)-(\dtt-k)(\dtt-2)\Big)m+(\dtt-1)\dtt(k-1)\\&
    =-\dtt(k-1)m+\dtt(\dtt-1)(k-1)\\&
    =\dtt(k-1)(\dtt-m-1)\geq 0,
    \end{align*}
    concluding the proof.
\end{proof}

\subsection{Bounding $h-\phi_{\all}$ on non-maximal parabolic subgroups}\label{subsec:4.2}
The main reason why we chose to use the functional $\phi_{\all}$ is due to its nice behaviour with respect to the operation of increasing the parabolic subgroup. This is key for us as we try to bound the entropy in the cusp and hence need to consider all parabolic subgroups together.
This property is shown in the following proposition.

\begin{proposition}
    For any $Q\in\mathcal{P}$ there is some maximal parabolic subgroup $P$ so that $Q\subseteq P$ and 
    \[(h-\phi_{\all})([w]_{Q})\leq (h-\phi_{\all})([w]_{P})\]
    for all $w\in W$.
\end{proposition}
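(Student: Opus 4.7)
The plan is to reduce to the case of a three-block parabolic and then to resolve it by a dichotomy on the block sizes. Throughout, write $Q$ as having blocks $B_1,\ldots,B_r$ of sizes $d_1,\ldots,d_r$, and set $\bar{a}_{B_j}(w)=\frac{1}{d_j}\sum_{i\in B_j}\upalpha_{w_i}$. Directly from the definitions of $\phi_{\all}$ and $\pi_Q$, one has $\phi_{\all}(\pi_Q(\upalpha^w))=z_1\bar{a}_{B_1}(w)+z_\dtt\bar{a}_{B_r}(w)$, which depends only on the first and last blocks of $Q$.

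First I reduce to the case $r\leq 3$. If $r=2$ then $Q$ is already maximal and I take $P=Q$. If $r\geq 3$, I let $Q''$ be the parabolic whose three blocks have sizes $d_1$, $d_2+\cdots+d_{r-1}$, $d_r$. Since $Q\subseteq Q''$ and the two share first and last blocks, the observation above gives $\phi_{\all}(\pi_{Q''}(\upalpha^w))=\phi_{\all}(\pi_Q(\upalpha^w))$, while $h(Q'',\att^w)\geq h(Q,\att^w)$ by monotonicity of entropy under inclusion of parabolics. Hence $(h-\phi_{\all})([w]_Q)\leq(h-\phi_{\all})([w]_{Q''})$ for every $w$, and it suffices to find a suitable maximal $P\supseteq Q''$.

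Now in the three-block case, the only two maximal parabolics containing $Q$ are $P_{d_1}$ (merging $B_2$ with $B_3$) and $P_{d_1+d_2}$ (merging $B_1$ with $B_2$). A direct averaging computation yields
\[\phi_{\all}(\pi_{P_{d_1+d_2}}(\upalpha^w))-\phi_{\all}(\pi_Q(\upalpha^w))=\frac{z_1 d_2}{d_1+d_2}\bigl(\bar{a}_{B_2}-\bar{a}_{B_1}\bigr),\]
while $h(P_{d_1+d_2},\att^w)-h(Q,\att^w)=\sum_{i\in B_2,\,j\in B_1}(\upalpha_{w_i}-\upalpha_{w_j})^+$ is nonnegative and, via $x^+\geq x$, bounded below by $d_1 d_2(\bar{a}_{B_2}-\bar{a}_{B_1})$. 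Applying the linear lower bound when $\bar{a}_{B_2}\geq\bar{a}_{B_1}$ and the trivial one otherwise, I will conclude that $P=P_{d_1+d_2}$ satisfies the required inequality for every $w$ whenever $d_1(d_1+d_2)\geq z_1$. A symmetric computation, using that $-z_\dtt\geq 0$, will show that $P=P_{d_1}$ works for every $w$ whenever $d_3(d_2+d_3)\geq -z_\dtt$.

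To finish I will invoke a short pigeonhole: a direct substitution verifies $z_1+(-z_\dtt)=\frac{\dtt-1}{\dtt-2}\bigl((m-1)+(\dtt-m-1)\bigr)=\dtt-1$, so it suffices to show $d_1(d_1+d_2)+d_3(d_2+d_3)\geq \dtt-1$ in order to prevent both sufficient conditions from failing. This estimate is elementary: since $(d_i-1)(d_i+d_j)\geq 0$ for $d_i\geq 1$, one gets $d_1(d_1+d_2)\geq d_1+d_2$ and $d_3(d_2+d_3)\geq d_2+d_3$, whose sum equals $\dtt+d_2\geq \dtt>\dtt-1$. The main obstacle, and the reason the whole argument works, lies precisely in this matching: the normalization of $\phi_{\all}$ in \S\ref{subsec:4.1} is tuned so that $z_1+(-z_\dtt)=\dtt-1$, which is exactly the threshold against which the elementary bound on $d_1(d_1+d_2)+d_3(d_2+d_3)$ succeeds via pigeonhole, thereby selecting an appropriate $P$.
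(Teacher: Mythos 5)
Your proof is correct and follows essentially the same route as the paper: reduce to three blocks using that $\phi_{\all}$ sees only the outermost blocks, then compare $Q$ with one of the two maximal parabolics containing it, bounding the entropy increment below by $\max(0,\sum(\beta_i-\beta_j))$ and using $0\leq z_1,\,-z_{\dtt}\leq \dtt-1$. The only real difference is how the winning parabolic is selected: the paper compares the outer block sizes directly (taking $P_{d_1}$ when $d_1\leq d_3$ and $P_{d_1+d_2}$ otherwise, verified via $k(\dtt-k)\geq\dtt-1$), whereas you pigeonhole on $z_1+(-z_{\dtt})=\dtt-1$ against $d_1(d_1+d_2)+d_3(d_2+d_3)\geq\dtt+d_2$ --- both verifications are equally elementary and both are valid.
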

\begin{proof}
    First of all, we may assume that $Q$ has only three blocks. Indeed, if otherwise, we may define $Q^{\prime}$ as the group with only three blocks, the first and the last of which are of the same size as the first and last blocks of $Q$, respectively. That is, we unite all of the middle blocks together. Then, note that
    \[\phi_{\all}(\pi_{Q}(\alpha))=\phi_{\all}(\pi_{Q^{\prime}}(\alpha))\]
    for any $\alpha\in\Lie(A)$,
    as $\phi_{\all}$ only depends on the first and last diagonal entries of its argument, and these do not change by replacing $Q$ with $Q^{\prime}$.
    Then, as entropy is monotone with respect to increasing groups, clearly
    \[(h-\phi_{\all})([w]_{Q})\leq (h-\phi_{\all})([w]_{Q^{\prime}})\]
    for any $w\in W$.

    So now assume $Q$ is a parabolic subgroup with exactly three blocks, of sizes $k_1, k_2, k_3$, respectively. 
    We separate to two cases. First, assume $k_1\leq k_3$. 
    Let $P=P_{k_1}$ be the maximal parabolic with only two blocks, the first of which of size $k_1$.
    Let $\beta=\upalpha^{w}$.
    Then 
    \begin{align*}\phi_{\all}(\pi_{Q}(\beta))-\phi_{\all}(\pi_{P}(\beta))&=z_{\dtt}\Big(\frac{1}{k_3}\sum_{i=k_1+k_2+1}^{\dtt}\beta_{i}-\frac{1}{k_2+k_3}\sum_{i=k_1+1}^{\dtt}\beta_i\Big)\\&
    =z_{\dtt}\Big(\frac{k_2}{k_3(k_2+k_3)}\sum_{i=k_1+k_2+1}^{\dtt}\beta_i-\frac{1}{k_2+k_3}\sum_{i=k_1+1}^{k_1+k_2}\beta_i\Big)\\&
    =\frac{1}{k_3(k_2+k_3)}z_{\dtt}\Big(k_2\sum_{i=k_1+k_2+1}^{\dtt}\beta_i-k_3\sum_{i=k_1+1}^{k_1+k_2}\beta_i\Big)\\&
    =\frac{1}{k_3(k_2+k_3)}z_{\dtt}\cdot\!\sum_{i=k_1+k_2+1}^{\dtt}\,\sum_{j=k_1+1}^{k_1+k_2}(\beta_i-\beta_j).
    \end{align*}
    Therefore, we obtain that
    \begin{equation}\label{eq:3.1}(h-\phi_{\all})([w]_{P})-(h-\phi_{\all})([w]_{Q})=\sum_{i=k_{1}+k_{2}+1}^{\dtt}\,\sum_{j=k_1+1}^{k_1+k_2}\Big((\beta_i-\beta_j)^{+}+\frac{1}{k_3(k_2+k_3)}z_{\dtt}(\beta_i-\beta_j)\Big).\end{equation}

    Note that for any $x\in \mathbb{R}$ and $\kappa\in [-1,0]$, we have $x^{+}+\kappa x\geq 0$. Therefore, in order to show that the RHS in Equation~\eqref{eq:3.1} is non-negative, it is sufficient to show that
    \[\frac{1}{k_3(k_2+k_3)}z_{\dtt}\in [-1,0].\]
    Indeed, first of all \[\frac{1}{k_3(k_2+k_3)}z_{\dtt}\leq 0\]
    because $z_{\dtt}\leq 0$.
    Secondly, as $z_{\dtt}\geq -(\dtt-1)$ and $k_{3}\geq k_1$, 
    \[\frac{1}{k_3(k_2+k_3)}z_{\dtt}\geq -\frac{\dtt-1}{k_{1}(\dtt-k_1)}=-\Big(1-\frac{(k_1-1)(\dtt-k_1-1)}{k_1(\dtt-k_1)}\Big)\geq -1.\]

    Similarly, if $k_1> k_3$ instead, we let $P=P_{k_1+k_2}$ be the maximal parabolic subgroup whose second block is of size $k_3$, and find that
    \begin{align*}\phi_{\all}(\pi_{Q}(\beta))-\phi_{\all}(\pi_{P}(\beta))&=z_{1}\Big(\frac{1}{k_1}\sum_{i=1}^{k_1}\beta_i-\frac{1}{k_1+k_2}\sum_{i=1}^{k_1+k_2}\beta_i\Big)\\&
    =-\frac{1}{k_1(k_1+k_2)}z_1\Big(\sum_{i=k_1+1}^{k_1+k_2}\sum_{j=1}^{k_1}(\beta_i-\beta_j)\Big).
    \end{align*}
    Therefore, it is sufficient in this case to show that
    \[\frac{1}{k_1(k_1+k_2)}z_{1}\in [0,1].\]
    Indeed, on the one hand it is clear that this term is positive since $z_1\geq 0$, and on the other, since $k_1>k_3$ and $z_{1}\leq \dtt-1$, we have
    \[\frac{1}{k_1(k_1+k_2)}z_1\leq \frac{\dtt-1}{k_3(\dtt-k_3)}\leq 1,\]
    as required.
    This concludes the proof.
\end{proof}

\subsection{Bounding $h-\phi_{\all}$ on maximal parabolic subgroups}\label{subsec:4.3}
Now we are only left with studying the value of $h-\phi_{\all}$ on the maximal parabolic subgroups, and showing that the bound
$h(G,\att)-\sum_{i=1}^{\dtt}\upalpha_i^{+}$ which was computed for the entropy in the cusp for the parabolic groups $P_1$ and $P_{\dtt-1}$ is still a bound over all maximal parabolic subgroups.
We need to show the following.
\begin{proposition}\label{proposition:3.5}
    For
    any $\att=\exp(\upalpha)\in A$ and any $m$ such that $\upalpha_{m}\geq 0\geq \upalpha_{m+1}$, the inequality
    \[\max_{[w]\in W_{P,\att}}(h-\phi_{\all}^{\att,m})([w]_{P})\leq h(G,\att)-\sum_{i=1}^{\dtt}\upalpha_i^{+}\]
    holds for $P=P_k$ for all $1\leq k\leq \dtt-1$.
\end{proposition}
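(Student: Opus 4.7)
The plan is to combine Corollary \ref{corollary:new.2}, which already gives the claim for $k = 1$ and $k = \dtt - 1$, with the convex-hull framework developed in Section \ref{section:h_for_pmax}. For intermediate $k$ (i.e.\ $2 \leq k \leq \dtt - 2$), I would first reduce to the range $2 \leq k \leq \dtt/2$ covered by Lemma \ref{lemma:new.6} via the block-reversal symmetry $\upalpha_i \mapsto -\upalpha_{\dtt+1-i}$: this sends $m$ to $\dtt-m$, interchanges the roles of $z_1$ and $-z_\dtt$, exchanges $P_k$ and $P_{\dtt-k}$, and preserves both $h(G,\att)$ and $\sum_i \upalpha_i^+$, so a bound on the transformed problem yields one on the original.

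For $2 \leq k \leq \dtt/2$, Lemma \ref{lemma:new.1} says that $\phi_{\all}|_{\Lie(A_{P_k})}$ is a scalar multiple of $\psi_k$. Writing this scalar as $\frac{k(\dtt-k)}{\dtt}c$ with $c$ the quantity of Lemma \ref{lemma:new.6}, we fall exactly into the setting of Proposition \ref{proposition:2.1}, and its proof shows that the maximum of $(h - \phi_{\all})([w]_{P_k})$ is attained at a vertex $\iota(\tau^{s_0})$ of the upper component of $\partial\mathcal{C}$, with $s_0 = \lfloor c \rfloor$ (and either $s_0$ or $s_0+1$ when $c$ is an integer, by Remark \ref{remark:3.3}(\ref{item:3.3_2})). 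Items (i) and (iii) of Lemma \ref{lemma:new.6} force $1 \leq s_0 \leq \dtt - k - 1$, so $\tau^{s_0}$ lies in the well-behaved regime and is given explicitly by Remark \ref{remark:2.11}.

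It then remains to compute $(h - \phi_{\all})([\tau^{s_0}]_{P_k})$ directly --- using Lemma \ref{lemma:8.4} for the Levi part and a direct pairing count for the unipotent radical, in the same spirit as \S\ref{subsec:8.8} --- and to compare it to $h(G,\att) - \sum_{i=1}^m \upalpha_i$. The difference should organize itself as a linear form $\sum_j \gamma_j \upalpha_j$ in the diagonal entries, and the goal is to show that $\gamma_j \leq 0$ for $j \leq m$ and $\gamma_j \geq 0$ for $j > m$, from which the non-positivity of the form follows by the sign conditions on the $\upalpha_j$. The signs of the coefficients will be governed by items (ii) and (iv) of Lemma \ref{lemma:new.6}, which encode precisely the relations between $c$, $k$, and $m$ needed here.

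The main obstacle I anticipate is this final bookkeeping. Because $c$ is generally non-integer, $s_0 = \lfloor c \rfloor$ need not coincide with the integer $m_k + k - 1$ that was used in \S\ref{subsec:8.8}, so one cannot simply reuse that computation verbatim; the slack $c - s_0 \in [0,1)$ and the offset between $s_0$ and $m$ must be threaded carefully through the identification of each coefficient $\gamma_j$. Additional care may also be needed in the degenerate cases $k = \dtt/2$ (where the symmetry step is vacuous) and where $c$ happens to be an integer (where the optimizing vertex is non-unique).
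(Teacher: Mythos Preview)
Your overall architecture is exactly that of the paper: handle $k=1,\dtt-1$ via Corollary~\ref{corollary:new.2}, reduce to $2\leq k\leq \dtt/2$ by the symmetry $\upalpha_i\mapsto -\upalpha_{\dtt+1-i}$ (this is Proposition~\ref{proposition:3.6}), then use Lemma~\ref{lemma:new.1} together with the convex-hull description of \S\ref{section:h_for_pmax} to locate the extremal vertex and reduce to an explicit linear inequality in the $\upalpha_i$.

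Two points need correction. The minor one: the supporting line of slope $\tfrac{k(\dtt-k)}{\dtt}c$ touches $\partial\mathcal{C}$ at $\iota(\tau^{\lfloor c\rfloor+1})$, not $\iota(\tau^{\lfloor c\rfloor})$, since the edge of slope $\tfrac{k(\dtt-k)}{\dtt}s$ joins $\tau^{s}$ to $\tau^{s+1}$; this shifts your computations by one index throughout.

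The substantive gap is in the final step. The difference $\Delta=h(G,\att)-\sum_i\upalpha_i^{+}-(h-\phi_{\all})([\tau^{\lfloor c\rfloor+1}]_{P_k})$ does organize as $\sum_{i=1}^{n} v_i\upalpha_i-\sum_{i=1}^{m}\upalpha_i$ for explicit nonnegative coefficients $v_i$ (the paper records them in Proposition~\ref{proposition:3.7}), but the sign pattern you hope for simply does not hold: e.g.\ the last coefficient is $v_n=\{c\}\in[0,1)$, so for $n>m$ the corresponding $\gamma_n=-\{c\}$ is negative, and for $n\leq m$ the coefficient $v_n-1=\{c\}-1$ is negative. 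Termwise signs therefore cannot give $\Delta\geq 0$. What the paper does instead is exploit the \emph{monotonicity} of both the $v_i$ and the $\upalpha_i$: a Chebyshev-type inequality (Lemma~\ref{lemma:4.8}) bounds $\sum_{i\leq m}(v_i-1)\upalpha_i$ and $\sum_{i>m}v_i\upalpha_i$ by products of averages, and then one uses $\sum_i\upalpha_i=0$ together with the global constraints $\sum_i v_i=kc\geq m$ and $kc\leq \dtt-1$ (precisely items~(iv) and~(ii) of Lemma~\ref{lemma:new.6}, as you suspected). So the relevant inputs are the ones you identified, but they enter through an averaging argument rather than through coefficient signs.
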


We first reduce the question to the case $k\leq \dtt/2$.
\begin{proposition}\label{proposition:3.6}
    Assume Proposition~\ref{proposition:3.5} holds for all $1\leq k\leq \dtt/2$. Then it holds for all $\dtt/2< k\leq \dtt-1$ as well.
\end{proposition}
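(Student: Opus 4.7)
The plan is to reduce the assertion for $k>\dtt/2$ to the already-assumed case $\tilde k\coloneqq\dtt-k\leq\dtt/2$ by applying Proposition~\ref{proposition:3.5} to a twisted pair $(\att',m')$ obtained from $(\att,m)$ via a Chevalley-type involution on the data. Concretely, I would set $\upalpha'_i\coloneqq-\upalpha_{\dtt+1-i}$, $\att'\coloneqq\exp(\upalpha')$, and $m'\coloneqq\dtt-m$. A routine unpacking of the definitions then confirms that $\upalpha'$ is again weakly decreasing, that $\upalpha'_{m'}=-\upalpha_{m+1}\geq 0\geq -\upalpha_m=\upalpha'_{m'+1}$, and that $h(G,\att')=h(G,\att)$ as well as $\sum_i(\upalpha'_i)^+=\sum_i\upalpha_i^+$, so the target bound $h(G,\att)-\sum_i\upalpha_i^+$ of Proposition~\ref{proposition:3.5} is invariant under the passage $(\att,m)\leadsto(\att',m')$.

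Next, let $w_0\in W$ denote the longest element (the permutation $i\mapsto\dtt+1-i$), and consider the involution $w\mapsto w'\coloneqq w_0 w w_0$ on $W$. I would verify that this descends to a bijection $W_{P_{\tilde k},\att'}\to W_{P_k,\att}$ compatible with the canonical-representative convention of~\S\ref{subsec:2.1}: the block-wise increasing conditions $w_1<\cdots<w_{\tilde k}$ and $w_{\tilde k+1}<\cdots<w_\dtt$ imposed on representatives in $W_{P_{\tilde k},\att'}$ translate, via the formula $w'_j=\dtt+1-w_{\dtt+1-j}$, into the analogous conditions $w'_1<\cdots<w'_k$ and $w'_{k+1}<\cdots<w'_\dtt$ on representatives in $W_{P_k,\att}$, and one checks directly that $\stab_W(\att)$ and $W(T,P_k)$ correspond respectively to $\stab_W(\att')$ and $W(T,P_{\tilde k})$ under $w\mapsto w'$, so the bijection is well-defined at the level of double cosets.

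The core of the argument is the identity
\[(h-\phi_{\all}^{\att',m'})([w]_{P_{\tilde k}})=(h-\phi_{\all}^{\att,m})([w']_{P_k}).\]
I would prove it in two parts. For the entropy term, the index substitution $(i,j)\mapsto(\dtt+1-j,\dtt+1-i)$ is a bijection of $S_{P_{\tilde k}}$ onto $S_{P_k}$: it interchanges the two Levi blocks (of sizes $\tilde k$ and $k$) and reflects the upper-triangular unipotent radical onto itself. Coupled with the pointwise identity $\upalpha^{w'}_i=-(\upalpha')^w_{\dtt+1-i}$, each summand $((\upalpha')^w_i-(\upalpha')^w_j)^+$ appearing in $h(P_{\tilde k},(\att')^w)$ becomes a matching summand of $h(P_k,\att^{w'})$. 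For the linear-functional term, the relation $m'=\dtt-m$ directly yields $z'_1=-z_\dtt$ and $z'_\dtt=-z_1$; combined with the set identity $\{\dtt+1-w_i:1\leq i\leq\tilde k\}=\{w'_{k+1},\ldots,w'_\dtt\}$ (and its analogue swapping the two blocks), a short block-sum computation gives the claimed equality of $\phi_{\all}$-values.

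Granting this identity, taking the maximum over $W$ on both sides and invoking Proposition~\ref{proposition:3.5} for $(\att',m')$ with parabolic $P_{\tilde k}$ (where $\tilde k\leq\dtt/2$, so the hypothesis applies) yields the bound $h(G,\att)-\sum_i\upalpha_i^+$ on $P_k$ with $(\att,m)$. The main technical obstacle I anticipate is the bookkeeping underpinning the two bijections: verifying that $w\mapsto w_0 w w_0$ genuinely respects the double-coset structure on both sides, and separately that $(i,j)\mapsto(\dtt+1-j,\dtt+1-i)$ sends the Levi and unipotent pieces of $S_{P_{\tilde k}}$ individually to the correct pieces of $S_{P_k}$ across the several case distinctions. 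These are elementary but require care.
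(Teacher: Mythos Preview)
Your proposal is correct and follows essentially the same approach as the paper: both exploit the involution $\upalpha_i\mapsto-\upalpha_{\dtt+1-i}$, $m\mapsto\dtt-m$ to transfer the $P_k$ problem for $(\att,m)$ to the $P_{\tilde k}$ problem for the twisted pair, establish the key identity $(h-\phi_{\all}^{\att,m})([w']_{P_k})=(h-\phi_{\all}^{\att',m'})([w]_{P_{\tilde k}})$, and observe that the target bound is invariant. The only cosmetic difference is that the paper routes the $\phi_{\all}$-identity through Lemma~\ref{lemma:new.1} (showing the proportionality constant $c$ is unchanged) and uses the block-swap $\tilde w$ of~\S\ref{subsec:8.10} composed with the reversal, whereas you work directly with the relation $z'_1=-z_\dtt$, $z'_\dtt=-z_1$ and conjugation by $w_0$; these are equivalent bookkeeping choices.
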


\begin{proof}
    We show that the entropy bound obtained for $\att$ on $P_{k}$ can also be obtained as an entropy bound for $\att^{-1}$ on $P_{\dtt-k}$. 

    Let $\sigma\in W$ be defined by $\sigma_i=\dtt-i$, and define $\upbeta=(-\upalpha)^{\sigma}$. Note that the elements on the diagonal of $\mathtt{b}\coloneqq\exp(\upbeta)$ are ordered from largest to smallest as was for $\upalpha$. 
    Let $\tilde{m}=\dtt-m$ and $\tilde{k}=\dtt-k$. 
    Let $c$ be the constant defined in Lemma~\ref{lemma:new.6} for $k$ and $m$, and $\tilde{c}$ the constant defined for $\tilde{k}$ and $\tilde{m}$.
    Then, note that
    \[\tilde{c}=\frac{(\dtt-1)\Big((2k-\dtt)(\dtt-m)+\dtt(\dtt-k-1)\Big)}{k(\dtt-k)(\dtt-2)}=\frac{(\dtt-1)\Big((\dtt-2k)m+\dtt(k-1)\Big)}{k(\dtt-k)(\dtt-2)}=c.\]

    For 
    $w\in W$, let $\tilde{w}\in W$
    be the element obtained by switching the first $k$ entries and the last $\dtt-k$ entries of $w$, as in~\S\ref{subsec:8.10}.
    Define $\tau=\sigma^{-1}\tilde{w}$, so that $\beta^{\tau}=-\upalpha^{\tilde{w}}$.
    Then, observe that
    \[\psi_{\tilde{k}}(\pi_{P_{\tilde{k}}}(\upbeta^{\tau}))=\frac{1}{\dtt-k}\sum_{j=k+1}^{\dtt}(-\upalpha_{w_j})-\frac{1}{k}\sum_{i=1}^{k}(-\upalpha_{w_i})=\psi_{k}(\pi_{P_k}(\upalpha^{w})).\]
    Furthermore, note that $h(G,\mathtt{b})=h(G,\att)$, hence
    \[h(P_{\tilde{k}},\mathtt{b}^{\tau})=h(G,\mathtt{b})-\sum_{i=1}^{k}\sum_{j=k+1}^{\dtt}\big((-\upalpha_{w_i})-(-\upalpha_{w_j})\big)^{+}=h(P_{k},\att^{w}).\]
    All together, using Lemma~\ref{lemma:new.1}, we see that
    \begin{equation}\label{eq:3.2} h(P_k,\att^{w})-\phi_{\all}^{\att,m}(\pi_{P_k}(\upalpha^{w}))=h(P_{\tilde{k}},\mathtt{b}^{\tau})-\phi_{\all}^{\mathtt{b},\tilde{m}}(\pi_{P_{\tilde{k}}}(\upbeta^{\tau})).\end{equation}

    Assuming $k>\dtt/2$, it follows that $\tilde{k}< \dtt/2$. 
    As $\upbeta_{\tilde{m}}\geq 0\geq \upbeta_{\tilde{m}+1}$,
    Proposition~\ref{proposition:3.5} can be evoked to deduce an upper bound $h(G,\mathtt{b})-\sum_{i=1}^{\dtt}\upbeta_i^{+}$ for the RHS in Equation~\eqref{eq:3.2}.
    As we also have
    \[h(G,\att)-\sum_{i=1}^{\dtt}\upalpha_{i}^{+}=h(G,\mathtt{b})-\sum_{i=1}^{\dtt}\upbeta_{i}^{+},\]
    this concludes the proof.
\end{proof}

We can now reduce Theorem~\ref{theorem:1.3} to the following two formal inequalities.
\begin{proposition}\label{proposition:3.7}
    Let $2\leq k\leq \dtt/2$, and $m$ be as before. Let $c$ be as in Lemma~\ref{lemma:new.6}. 
    Assume the following holds:
    \begin{enumerate}
        \item\label{ineq:4.7.1}  If $1\leq \lfloor c\rfloor\leq k-1$:
        \[\sum_{i=1}^{k-\lfloor c\rfloor }c\upalpha_i+\sum_{i=1}^{\lfloor c\rfloor }\Big((\lfloor c\rfloor+1-i)\upalpha_{k-\lfloor c\rfloor +2i-1}+(c-i)\upalpha_{k-\lfloor c\rfloor+2i}\Big)-\sum_{i=1}^{\dtt}\upalpha_i^{+}\geq 0.\]
        \item\label{ineq:4.7.2} 
        If $k \leq \lfloor c\rfloor\leq  \dtt-k-1$:
        \[\sum_{i=1}^{\lfloor c\rfloor -k+1}k\upalpha_i+\sum_{i=1}^{k-1}(k-i)\upalpha_{\lfloor c\rfloor -k+1+2i}+\sum_{i=1}^{k}(k+\{c\}-i)\upalpha_{\lfloor c\rfloor -k+2i}-\sum_{i=1}^{\dtt}\upalpha_{i}^{+}\geq 0\]
    \end{enumerate}
    Then Proposition~\ref{proposition:3.5} holds, and with it Theorem~\ref{theorem:1.3}.
\end{proposition}
\begin{proof}

    By Proposition~\ref{proposition:3.6} and Corollary~\ref{corollary:new.2}, it is sufficient to prove that
    \[\max_{w\in W}(h-\phi_{\all})([w]_{P_k})\leq h(G,\att)-\sum_{i=1}^{\dtt}\upalpha_{i}^{+}\]
    for all $2\leq k\leq \dtt/2$. Fix such $k$.
    Note that $1\leq \lfloor c\rfloor\leq \dtt-k-1$ by Lemma~\ref{lemma:new.6}, and 
    \begin{equation}\label{eq:4.3}\phi_{\all}|_{\Lie(A_{P_k})}=\frac{k(\dtt-k)}{\dtt}c\psi_{k}|_{\Lie(A_{P_k})}\end{equation}
    by Lemma~\ref{lemma:new.1}.
    Consider the upper component of $\partial\mathcal{C}$, which was studied in~\S\ref{subsec:2.2}.
    Note that the line of slope $\frac{k(\dtt-k)}{\dtt}c$ which passes through the point  $\iota(\tau^{\lfloor c\rfloor +1})$ is tangent to $\mathcal{C}$ and bounds it from above, where $\tau^{\lfloor c\rfloor +1}$ is as in Equations~\eqref{eq:3.5N} and~\eqref{eq:3.6N}.
    Therefore, the maximal value of $h-\phi_{\all}$ on $P_k$ is obtained for the element $\tau^{\lfloor c\rfloor +1}$, namely
    \[\max_{w\in W}(h-\phi_{\all})([w]_{P_k})=(h-\phi_{\all})([\tau^{\lfloor c\rfloor+1}]_{P_{k}}).\]

    We would like to compute the difference between the bounds on the entropy in the cusp for the maximal parabolic subgroups $P_{1}$ and $P_{k}$, namely
    \[\Delta=h(G,\att)-\sum_{i=1}^{\dtt}\upalpha_{i}^{+}-(h-\phi_{\all})([\tau^{\lfloor c\rfloor +1}]_{P_{k}})\]
    and show that it is non-negative.
    For simplicity of notation, let $\beta=\upalpha^{\tau^{\lfloor c\rfloor +1}}$. 
    Note that, using Equation~\eqref{eq:4.3}, we have 
    \begin{align*}\Delta &=h(U_{P_k}^{T},\exp(\beta))+\frac{k(\dtt-k)}{\dtt}c\cdot\psi_{k}(\pi_{P_k}(\beta))-\sum_{i=1}^{\dtt}\upalpha_{i}^{+}\\&
    =\sum_{i=k+1}^{\dtt}\sum_{j=1}^{k}(\beta_i-\beta_j)^{+}+c\sum_{j=1}^{k}\beta_j-\sum_{i=1}^{\dtt}\upalpha_i^{+}.\end{align*}

    We separate to two cases depending on the value of $c$.
    First, assume $\lfloor c\rfloor +1\leq k$.
   Then, using Equation~\eqref{eq:3.5N}, and a variation of Lemma~\ref{lemma:3.3N} for $U_{P_k}^{T}$ instead of $U_{P_k}$, we have
   \[\sum_{i=k+1}^{\dtt}\sum_{j=1}^{k}(\beta_i-\beta_j)^{+}
    =\sum_{i=1}^{\lfloor c\rfloor}(\lfloor c\rfloor+1-i)\upalpha_{k-\lfloor c\rfloor +2i-1}-\sum_{i=1}^{\lfloor c\rfloor }i\upalpha_{k-\lfloor c\rfloor +2i},\]
    hence
    \[
    \Delta
    =\sum_{i=1}^{k-\lfloor c\rfloor }c\upalpha_i+\sum_{i=1}^{\lfloor c\rfloor }\Big((\lfloor c\rfloor+1-i)\upalpha_{k-\lfloor c\rfloor +2i-1}+(c-i)\upalpha_{k-\lfloor c\rfloor+2i}\Big)-\sum_{i=1}^{\dtt}\upalpha_i^{+}.\]
Then $\Delta\geq 0$ by assumption, as required.
    
    \medskip
    
    Next, assume $\lfloor c\rfloor +1>k$. Recall again that $\lfloor c\rfloor+1\leq \dtt-k$. 
    Then, using Equation~\eqref{eq:3.6N}, we have
    \begin{align*}\sum_{i=k+1}^{\dtt}\sum_{j=1}^{k}(\beta_i-\beta_j)^{+}&=k\sum_{i=1}^{\lfloor c\rfloor -k+1}\upalpha_i+\sum_{i=1}^{k-1}(k-i)\upalpha_{\lfloor c\rfloor -k+1+2i}-\sum_{i=1}^{k}(\lfloor c\rfloor -k+i)\upalpha_{\lfloor c\rfloor -k+2i}.
    \end{align*}
    Therefore,
    \begin{align*}\Delta&
    =\sum_{i=1}^{\lfloor c\rfloor-k+1}k\upalpha_i+\sum_{i=1}^{k-1}(k-i)\upalpha_{\lfloor c\rfloor-k+1+2i}+\sum_{i=1}^{k}(k+\{c\}-i)\upalpha_{\lfloor c\rfloor-k+2i}-\sum_{i=1}^{\dtt}\upalpha_{i}^{+},
    \end{align*}
    which by assumption is non-negative as well. 
\end{proof}

The rest of this section is devoted to proving these inequalities. We prove a more general formal inequality which takes care of both cases simultaneously. 
First, we require the following simple lemma, whose proof is left to the reader.
\begin{lemma}\label{lemma:4.8}
    Let $(x_i)_{i=1}^{n},(y_i)_{i=1}^{n}$ be monotone non-increasing sequences. 
    Then
    \[\sum_{i=1}^{n}x_i y_i\geq \frac{1}{n}\sum_{i=1}^{n}x_i\cdot\sum_{i=1}^{n}y_i.\]
\end{lemma}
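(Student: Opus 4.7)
The inequality in Lemma~\ref{lemma:4.8} is the classical Chebyshev sum inequality, and the plan is to prove it via the standard symmetrization identity. The key observation is that for any two real sequences $(x_i)_{i=1}^n$ and $(y_i)_{i=1}^n$ one has the algebraic identity
\[
\frac{1}{2}\sum_{i=1}^{n}\sum_{j=1}^{n}(x_i-x_j)(y_i-y_j)=n\sum_{i=1}^{n}x_i y_i-\Big(\sum_{i=1}^{n}x_i\Big)\Big(\sum_{j=1}^{n}y_j\Big),
\]
which is verified by expanding the left-hand side and collecting like terms. Dividing both sides by $n$ transforms this identity into
\[
\sum_{i=1}^{n}x_i y_i-\frac{1}{n}\Big(\sum_{i=1}^{n}x_i\Big)\Big(\sum_{j=1}^{n}y_j\Big)=\frac{1}{2n}\sum_{i=1}^{n}\sum_{j=1}^{n}(x_i-x_j)(y_i-y_j),
\]
so the desired bound reduces to showing that the double sum on the right is non-negative.

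The second step is to invoke the monotonicity hypothesis. Since both sequences are non-increasing, for any pair $(i,j)$ the differences $x_i-x_j$ and $y_i-y_j$ carry the same sign: both are non-negative when $i\leq j$ and both are non-positive when $i\geq j$. Consequently each summand $(x_i-x_j)(y_i-y_j)$ is non-negative, the double sum is non-negative, and the inequality follows.

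There is essentially no obstacle here: the whole argument is a one-line identity followed by a sign check. The only care needed is to write the identity correctly (getting the factor of $2$ and $n$ right) and to note that the same argument works verbatim if both sequences are instead non-decreasing, whereas if they are oppositely ordered the inequality reverses. Since the statement only needs the non-increasing case, no further discussion is required.
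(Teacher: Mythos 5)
Your proof is correct: the symmetrization identity is verified by direct expansion, and the monotonicity of both sequences indeed forces every term $(x_i-x_j)(y_i-y_j)$ to be non-negative, which is exactly the classical Chebyshev sum inequality argument. The paper explicitly leaves the proof of this lemma to the reader, so there is no proof to compare against; your argument is the standard one and fills that gap completely.
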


Let us proceed to proving the general formal inequality.
\begin{lemma}\label{lemma:new.4}
    Let $(v_i)_{i=1}^{n}$ be non-negative real numbers for some $1\leq n\leq \dtt$. Let $m$ be some integer so that $\upalpha_{m}\geq 0\geq \upalpha_{m+1}$. Assume one of the following hold:
    \begin{enumerate}
        \item\label{item:?.1} $m<n$, $(v_i)_{i=1}^{n}$ is monotone non-increasing, and
        $\frac{1}{m}\underset{i=1}{\overset{m}{\sum}}(v_i-1)\geq\frac{1}{\dtt-m}\underset{i=m+1}{\overset{n}{\sum}}v_i$.    
        \item\label{item:?.2} $m\geq n$,
        $v_i\geq 1$ for all $i\leq n-1$,
        and $\underset{i=1}{\overset{n}{\sum}}v_i\geq m$.
    \end{enumerate}
    Then 
    \[\sum_{i=1}^{n}v_i \upalpha_i-\sum_{i=1}^{\dtt}\upalpha_i^{+}\geq 0.\]
\end{lemma}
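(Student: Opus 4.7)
The overall plan is to treat the two cases separately, in both instances exploiting the sign pattern $\upalpha_1,\ldots,\upalpha_m \geq 0 \geq \upalpha_{m+1},\ldots,\upalpha_\dtt$ together with $\sum_{i=1}^\dtt \upalpha_i = 0$. Since $\sum_{i=1}^\dtt \upalpha_i^+ = \sum_{i=1}^m \upalpha_i$, the target inequality is equivalent to $\sum_{i=1}^n v_i \upalpha_i \geq \sum_{i=1}^m \upalpha_i$.

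For case (\ref{item:?.2}), where $m \geq n$, I would rewrite the target as
\[
\sum_{i=1}^n (v_i - 1)\upalpha_i \geq \sum_{i=n+1}^m \upalpha_i.
\]
The right-hand side is at most $(m-n)\upalpha_n$ by monotonicity $\upalpha_i \leq \upalpha_n$ for $n \leq i \leq m$, combined with $\upalpha_i \geq 0$ throughout. For the left-hand side, for $i \leq n-1$ we have $v_i - 1 \geq 0$ and $\upalpha_i \geq \upalpha_n \geq 0$, so $(v_i - 1)\upalpha_i \geq (v_i - 1)\upalpha_n$; the $i = n$ term gives $(v_n - 1)\upalpha_n$ identically. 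Summing and invoking $\sum_{i=1}^n v_i \geq m$ together with $\upalpha_n \geq 0$ yields $\sum_{i=1}^n (v_i - 1)\upalpha_i \geq \bigl(\sum v_i - n\bigr)\upalpha_n \geq (m-n)\upalpha_n$, which closes this case.

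For case (\ref{item:?.1}), where $m < n$, I would split
\[
\sum_{i=1}^n v_i \upalpha_i - \sum_{i=1}^m \upalpha_i = \sum_{i=1}^m (v_i - 1)\upalpha_i + \sum_{i=m+1}^n v_i \upalpha_i
\]
and bound each piece using Lemma~\ref{lemma:4.8}. The sequences $(v_i - 1)_{i=1}^m$ and $(\upalpha_i)_{i=1}^m$ are both monotone non-increasing, so Chebyshev gives $\sum_{i=1}^m (v_i - 1)\upalpha_i \geq AP/m$, where $A := \sum_{i=1}^m (v_i - 1)$ and $P := \sum_{i=1}^m \upalpha_i \geq 0$. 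For the second subsum, $(v_i)_{i=m+1}^n$ is non-increasing while $(-\upalpha_i)_{i=m+1}^n$ is non-decreasing, so applying Lemma~\ref{lemma:4.8} in the reverse direction (equivalently, to $(v_i)$ and to a reindexed $(\upalpha_i)$) yields $\sum_{i=m+1}^n v_i \upalpha_i \geq BQ/(n-m)$, with $B := \sum_{i=m+1}^n v_i \geq 0$ and $Q := \sum_{i=m+1}^n \upalpha_i \leq 0$.

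The main step, and where I expect the actual work to sit, is then verifying $AP/m + BQ/(n-m) \geq 0$, i.e.\ $AP/m \geq B(-Q)/(n-m)$. The hypothesis $A/m \geq B/(\dtt-m)$ together with $P \geq 0$ gives $AP/m \geq BP/(\dtt-m)$. It remains to compare $P/(\dtt-m)$ with $(-Q)/(n-m)$: using $\sum_{i=1}^\dtt \upalpha_i = 0$, one has $P = \sum_{i=m+1}^\dtt (-\upalpha_i)$, so $P/(\dtt-m)$ and $(-Q)/(n-m)$ are averages of the non-negative, non-decreasing sequence $(-\upalpha_i)_{i \geq m+1}$ over $[m+1,\dtt]$ and $[m+1,n]$ respectively. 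Since averaging a non-decreasing sequence over a shorter initial segment gives a smaller value, we obtain $P/(\dtt-m) \geq (-Q)/(n-m)$, and chaining the estimates finishes the proof.
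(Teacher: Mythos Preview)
Your proof is correct and follows essentially the same approach as the paper: the same case split, the same decomposition $\sum_{i=1}^{m}(v_i-1)\upalpha_i+\sum_{i=m+1}^{n}v_i\upalpha_i$, and the same use of Lemma~\ref{lemma:4.8}. The only minor difference is in case~\ref{item:?.1}, where the paper extends $(v_i)$ by zeros and applies Lemma~\ref{lemma:4.8} directly over $[m+1,\dtt]$ (so that $\sum_{i=m+1}^{\dtt}\upalpha_i=-P$ appears immediately and the hypothesis finishes at once), bypassing your separate averaging comparison $P/(\dtt-m)\geq (-Q)/(n-m)$.
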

\begin{proof}
    Let us first assume that case~\ref{item:?.1} holds.
    Then, using Lemma~\ref{lemma:4.8},
    \begin{align*}\sum_{i=1}^{n}v_i \upalpha_i-\sum_{i=1}^{\dtt}\upalpha_i^{+}&=\sum_{i=1}^{m}(v_i-1) \upalpha_i+\sum_{i=m+1}^{n}v_i\upalpha_i\\&
    \geq \Big(\frac{1}{m}\sum_{i=1}^{m}( v_i-1)\Big)\sum_{i=1}^{m}\upalpha_i+(\frac{1}{\dtt-m}\sum_{i=m+1}^{n}v_i)\sum_{i=m+1}^{\dtt}\upalpha_i\\&
    =\Big(\frac{1}{m}\sum_{i=1}^{m}( v_i-1)-\frac{1}{\dtt-m}\sum_{i=m+1}^{n}v_i\Big)\sum_{i=1}^{m}\upalpha_i\geq 0,
    \end{align*}
    where we used the fact that $\sum_{i=1}^{\dtt}\upalpha_i=0$.

    Let us now assume that case~\ref{item:?.2} holds.
    Then
    \[\sum_{i=1}^{n}v_i\upalpha_i -\sum_{i=1}^{\dtt}\upalpha_i^{+}=\sum_{i=1}^{n}(v_i-1)\upalpha_{i}-\sum_{i=n+1}^{m}\upalpha_i\geq(\sum_{i=1}^{n}v_i-m)\upalpha_{n}\geq 0.\]
\end{proof}

We consider a specific case, related to both inequalities of Proposition~\ref{proposition:3.7}. 
\begin{proposition}\label{proposition:new.5}
    Let $\kappa\in[0,1)$ and integers $s\geq 1$ and $t\geq 2$. 
    Let $(v_i)_{i=1}^{s+t}$ be defined by
    \[v_i=\begin{cases}
        \kappa & i=s+t \\
        1 &  i=s+t-1 \\
        v_{i+2}+1 & s\leq i\leq s+ t-2 \\
        v_{s} & 1\leq i <s
    \end{cases}.\]
    Let $m$ be as before. 
    Let $S=\sum_{i=1}^{s+t}v_i$. 
    Assume that one of the following holds:
    \begin{enumerate}
        \item $m<s+t\leq \dtt-1$, and $\frac{1}{\dtt-1}(S-v_1)\leq v_1-1$. 
        \item $m\geq s+t$, and  $S\geq m$.
    \end{enumerate}
    Then \[\sum_{i=1}^{s+t}v_i\upalpha_i-\sum_{i=1}^{\dtt}\upalpha_{i}^{+}\geq 0.\]
\end{proposition}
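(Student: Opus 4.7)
The two cases of the Proposition mirror the two cases of Lemma~\ref{lemma:new.4}, and the natural plan is to reduce each case to the corresponding case of the Lemma. First I would verify that $(v_i)_{i=1}^{s+t}$ is non-increasing: for $i<s$ we have $v_i=v_s$ by definition, and for $s\le i\le s+t-1$ a downward induction from the boundary values $v_{s+t-1}=1$, $v_{s+t}=\kappa$ using the recurrence $v_i=v_{i+2}+1$ shows that each consecutive difference $v_i-v_{i+1}$ equals either $\kappa$ or $1-\kappa$, hence is non-negative. With monotonicity in hand, Case~2 is immediate from Lemma~\ref{lemma:new.4}(2): monotonicity together with $v_{s+t-1}=1$ gives $v_i\ge 1$ for $i\le s+t-1$, and the bound $S\ge m$ is part of the hypothesis.

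For Case~1 the plan is to apply Lemma~\ref{lemma:new.4}(1) at the given $m$. Writing its hypothesis as
\begin{equation*}
\Lambda(m)\;:=\;\frac{1}{m}\sum_{i=1}^m(v_i-1)-\frac{1}{\dtt-m}\sum_{i=m+1}^{s+t}v_i\;\ge\;0,
\end{equation*}
note that $\Lambda(1)\ge 0$ is precisely the Proposition's hypothesis. Setting $f(m')=m'(\dtt-m')\Lambda(m')$ and $U=\sum_{i=1}^m(v_1-v_i)$, a direct expansion of $f(m)=\dtt A_m-m(S+\dtt-m)$ (with $A_m=\sum_{i\le m}v_i$) yields the clean identity
\begin{equation*}
f(m)\;=\;m\,f(1)\;+\;m(m-1)\;-\;\dtt\,U,
\end{equation*}
so the desired $\Lambda(m)\ge 0$ reduces to the bound $\dtt\,U\le m(f(1)+m-1)$. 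For $m\le s$ the sequence $v$ is constant on $\{1,\dots,m\}$, so $U=0$ and the bound is immediate from $f(1)\ge 0$.

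The remaining range $s<m\le s+t-1$ is the technical heart of the argument. I would invoke the slab decomposition $v_i=\sum_{k=1}^{d(i)}l_k$, where $d(i)$ is the depth at position $i$ and the levels $l_k$ alternate between $\kappa$ (odd $k$) and $1-\kappa$ (even $k$). This yields the closed form $U=\sum_{q=1}^{m-s}q\,l_{p+q+1}$ with $p=s+t-m$, and a parallel expression for $f(1)$ in terms of the partial sums $L_q=l_1+\cdots+l_q$. Applying the pairing identity $l_k+l_{k+1}=1$ on consecutive terms and separating according to the parity of $p$, the inequality $\dtt\,U\le m(f(1)+m-1)$ becomes a finite algebraic verification which is tight at the extremal configurations, consistent with the sharpness claimed elsewhere in the paper. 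The parity-tracking bookkeeping is where I expect the main difficulty to lie, since the naive term-wise estimate $l_k\le 1$ already fails to give the bound in the tight cases (e.g.\ the example $s=1$, $t=2$, $\kappa$ close to $1$), so the slack encoded in $f(1)$ must be kept intact throughout.
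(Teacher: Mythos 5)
Your reduction to Lemma~\ref{lemma:new.4} is the right idea, and the parts you actually carry out are correct: the monotonicity of $(v_i)$, Case~2 via item~(2) of the lemma, the observation that your $\Lambda(1)\ge 0$ is exactly the hypothesis $\frac{1}{\dtt-1}(S-v_1)\le v_1-1$, the identity $f(m)=m\,f(1)+m(m-1)-\dtt\,U$ (both sides equal $\dtt A_m-m(S+\dtt-m)$, so this checks out), and the immediate conclusion for $m\le s$, where $U=0$. That last step is in fact slightly cleaner than the paper's chain of inequalities for the $m\le s$ subcase.

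The gap is the range $s<m\le s+t-1$, which you explicitly defer to ``a finite algebraic verification'' whose ``parity-tracking bookkeeping'' you have not done. That range is the substance of the proposition, and it is not routine: the target $\dtt\,U\le m\bigl(f(1)+m-1\bigr)$ cannot be extracted from the sign of $f(1)$ alone. For instance, with $s=1$, $t=2$, $m=2$ one has $\dtt\,U=\dtt\kappa$ against $m(m-1)=2$, and the inequality reduces, after substituting the explicit value $f(1)=(\dtt-2)\kappa-1$, to $\dtt\ge 4=s+t+1$; so both the exact value of $f(1)$ and the standing constraint $s+t\le\dtt-1$ must enter, and nothing in your sketch shows how the alternating levels combine to produce this. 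For comparison, the paper dispatches this range without any such computation (and without even using $\Lambda(1)\ge0$): for $m>s$ it verifies the hypothesis of Lemma~\ref{lemma:new.4}(1) directly via the two-term averages
\begin{equation*}
\frac{1}{m}\sum_{i=1}^{m}(v_i-1)\;\ge\;\frac{v_{m-1}+v_m}{2}-1
\qquad\text{and}\qquad
\frac{1}{\dtt-m}\sum_{i=m+1}^{s+t}v_i\;\le\;\frac{v_{m+1}+v_{m+2}}{2}\;=\;\frac{v_{m-1}+v_m}{2}-1,
\end{equation*}
where the final equality is the recurrence $v_i=v_{i+2}+1$ (with the obvious modification when $m+1=s+t$) and the middle inequality uses $\dtt-m\ge s+t-m$ together with the pairing $v_{m+2j+1}+v_{m+2j+2}=v_{m+1}+v_{m+2}-2j$. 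If you insist on the identity route you will end up reconstructing exactly this pairing structure inside your bound on $U$; as written, the central inequality of the proof is asserted rather than proved.
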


\begin{proof}
    Note that if $m\geq s+t$ and $S\geq m$ as in second condition, then Item~\ref{item:?.2} of Lemma~\ref{lemma:new.4} applies and the result follows immediately.
    
    So we assume that the first condition, for the $m<s+t$ case, holds. 
    We separate to two cases. 
    First of all, assume $m>s$. Then $m\geq 2$, and we have by monotonicity
    \[\frac{1}{m}\sum_{i=1}^{m}(v_i-1)\geq \frac{v_{m-1}+v_{m}}{2}-1.\]
    As $m< s+t\leq \dtt-1$, we also similarly have
    \[\frac{1}{\dtt-m}\sum_{i=m+1}^{s+t}v_i\leq \begin{cases}\frac{v_{m+1}+v_{m+2}}{2} & \text{if }\ \ m+1<s+t\\
    \frac{1}{2}v_{m+1} & \text{if }\ \ m+1=s+t
    \end{cases}=\frac{v_{m-1}+v_{m}}{2}-1.\] 
    Together we have
    \[\frac{1}{m}\sum_{i=1}^{m}(v_i-1)\geq \frac{1}{\dtt-m}\sum_{i=m+1}^{s+t}v_i\]
    and so the result follows for this case from item~\ref{item:?.1} of Lemma~\ref{lemma:new.4}.
    
    Otherwise, if $m\leq s$, then by monotonicity and positivity
    \begin{align*}\frac{1}{\dtt-m}\sum_{i=m+1}^{s+t}v_i&
    \leq \frac{1}{\dtt-m}\frac{s+t-m}{s+t-1}\sum_{i=2}^{s+t}v_i
    \leq \frac{1}{\dtt-1}\sum_{i=2}^{s+t}v_i
    =\frac{1}{\dtt-1}(S-v_1)\\&
    \leq v_1-1=\frac{1}{m}\sum_{i=1}^{m}(v_i-1).
    \end{align*}
    So the result follows from  Lemma~\ref{lemma:new.4} as before.
\end{proof}

\begin{lemma}\label{lemma:3.9}
    The sum of coefficients $S$ as in Proposition~\ref{proposition:new.5} satisfies
    \[S=\begin{cases}
    \frac{t+1}{2}(\kappa+s+\frac{t-1}{2}) & \text{$t$ is odd}\\
    (\frac{t}{2}+s)(\frac{t}{2}+\kappa) & \text{$t$ is even}
    \end{cases}\]
\end{lemma}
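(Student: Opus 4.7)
My plan is to simply unfold the recursion defining $v_i$ and then split the sum $S$ into an arithmetic part (for $s\leq i\leq s+t$) and a constant part (for $1\leq i<s$), treating the parity of $t$ separately. Concretely, I would parametrize by the offset $\ell=s+t-i\in\{0,1,\ldots,t\}$ from the top of the tail. The recursion $v_i=v_{i+2}+1$ with initial conditions $v_{s+t}=\kappa$ and $v_{s+t-1}=1$ tells us that along each parity class the sequence is a shifted arithmetic progression, so that
\[
v_{s+t-\ell}=\begin{cases}\kappa+\ell/2&\ell\text{ even}\\ (\ell+1)/2&\ell\text{ odd}\end{cases}.
\]

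Next, I would compute $T\coloneqq\sum_{\ell=0}^{t}v_{s+t-\ell}$ by summing the two arithmetic progressions above. When $t$ is odd, there are $(t+1)/2$ offsets of each parity, giving
\[
T=\sum_{j=0}^{(t-1)/2}(\kappa+j)+\sum_{j=1}^{(t+1)/2}j=\tfrac{t+1}{2}\,\kappa+\tfrac{(t+1)^{2}}{4}=\tfrac{t+1}{2}\bigl(\kappa+\tfrac{t+1}{2}\bigr);
\]
when $t$ is even, there are $t/2+1$ even offsets and $t/2$ odd offsets, producing
\[
T=\bigl(\tfrac{t}{2}+1\bigr)\kappa+\tfrac{t}{2}\bigl(\tfrac{t}{2}+1\bigr)=\bigl(\tfrac{t}{2}+1\bigr)\bigl(\kappa+\tfrac{t}{2}\bigr).
\]

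Then I would add the contribution of the flat prefix. Since $v_i=v_s$ for $i<s$, the prefix contributes $(s-1)v_s$, where $v_s$ is read off from the offset $\ell=t$ formula: $v_s=(t+1)/2$ when $t$ is odd and $v_s=\kappa+t/2$ when $t$ is even. Adding $(s-1)v_s$ to $T$ and factoring, I would get exactly
\[
S=\begin{cases}\tfrac{t+1}{2}\bigl(\kappa+s+\tfrac{t-1}{2}\bigr)&t\text{ odd}\\[2pt]\bigl(\tfrac{t}{2}+s\bigr)\bigl(\tfrac{t}{2}+\kappa\bigr)&t\text{ even}\end{cases}
\]
as claimed. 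I do not expect a real obstacle here: the only care point is bookkeeping of which offsets lie in which parity class, and in particular not double-counting the endpoints at $\ell=0$ and $\ell=t$; separating the two parities of $t$ handles this cleanly.
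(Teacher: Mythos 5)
Your computation is correct and follows essentially the same route as the paper: unfold the recursion into two arithmetic progressions according to the parity of the offset from $i=s+t$, sum each, and add the constant prefix $(s-1)v_s$ (the paper merely groups the $i=s$ term with the prefix, writing $s$ copies of $v_s$, which is the same bookkeeping). No issues.
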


\begin{proof}
    In case $t$ is odd, we have
    \begin{align*}
        \sum_{i=1}^{n}v_i&=\sum_{i=0}^{(t-1)/2}(\kappa+i)+\sum_{i=1}^{(t-1)/2}i+\sum_{i=1}^{s}\frac{t+1}{2}=\frac{t+1}{2}\kappa+2\sum_{i=1}^{(t-1)/2}i+s\frac{t+1}{2}\\&
        =\frac{t+1}{2}(\kappa+s)+\frac{t+1}{2}\frac{t-1}{2}=\frac{t+1}{2}(\kappa+s+\frac{t-1}{2}).
    \end{align*}
    In case $t$ is even, we have
    \begin{align*}
        \sum_{i=1}^{n}v_i&=\sum_{i=0}^{t/2-1}(\kappa+i)+\sum_{i=1}^{t/2}i+\sum_{i=1}^{s}(\kappa+\frac{t}{2})=\frac{t}{2}\kappa+2\sum_{i=1}^{t/2-1}i+s\kappa+(s+1)\frac{t}{2}\\&
        =\frac{t}{2}\kappa+\frac{t}{2}(\frac{t}{2}-1)+s\kappa+(s+1)\frac{t}{2}=(\frac{t}{2}+s)(\frac{t}{2}+\kappa).
    \end{align*}
\end{proof}

We can now conclude.
\begin{proof}[Proof of Theorem~\ref{theorem:1.3}]
    We prove that the two inequalities in Proposition~\ref{proposition:3.7} hold.
    Both inequalities are of the form \[\sum_{i=1}^{s+t}v_i\upalpha_i-\sum_{i=1}^{\dtt}\upalpha_{i}^{+}\geq 0\] for $v_i$ as in Proposition~\ref{proposition:new.5}, where $\kappa=\{c\}$, and  $t=2\lfloor c\rfloor,\ s=k-\lfloor c\rfloor$ for the first inequality ~\ref{proposition:3.7}~\eqref{ineq:4.7.1}, while 
    $t=2k-1,\ s=\lfloor c\rfloor -k+1$ for the second~\ref{proposition:3.7}~\eqref{ineq:4.7.2}.
    
    Then, it follows from Lemma~\ref{lemma:3.9} that in either case
    $S=kc$. Therefore, by Lemma~\ref{lemma:new.6}, we have $S\geq m$. Then, to use Proposition~\ref{proposition:new.5}, it is sufficient to show (in both cases) that \[\frac{1}{\dtt-1}(S-v_1)\leq v_1-1.\]
    
    For the first inequality~\ref{proposition:3.7}~\eqref{ineq:4.7.1}, we have $v_1=c$, and so
    \[\frac{1}{\dtt-1}(S-v_1)=\frac{1}{\dtt-1}(k-1)c=c-\frac{\dtt-k}{\dtt-1}c\leq c-1=v_1-1\]
    as required, where we used Lemma~\ref{lemma:new.6}.
    
    Next, similarly, for the second inequality~\ref{proposition:3.7}~\eqref{ineq:4.7.2} we have $v_1=k$, and by using Lemma~\ref{lemma:new.6} for $kc\leq\dtt-1$, we obtain
    \[\frac{1}{\dtt-1}(S-v_1)=\frac{1}{\dtt-1}(kc-k)\leq 1-\frac{k}{\dtt-1}<1\leq k-1=v_1-1,\]
    as required. This concludes the proof of the theorem.
\end{proof}

\bibliographystyle{plain}

\end{document}